\documentclass[11pt]{amsart}
\usepackage[utf8]{inputenc}
\usepackage{xcolor}
\usepackage{tikz}
\usepackage{bm}
\usepackage{amsthm, amsfonts, amsmath, amssymb, mathrsfs}

\title[Rigidity of hypersurface Zoll manifolds]{Topological and spectral rigidity of hypersurface Zoll manifolds}
\author{Gustavo Martins}

\address{IMPA - Instituto Nacional de Matematica Pura e Aplicada, Rio de Janeiro, RJ, Brasil, 22460-320.}
\email{\texttt{gustavo.martins@impa.br}}

\newtheorem{theorem}{Theorem}[section]
\newtheorem{proposition}[theorem]{Proposition}
\newtheorem{corollary}[theorem]{Corollary}
\newtheorem{lemma}[theorem]{Lemma}
\theoremstyle{definition}
\newtheorem{definition}[theorem]{Definition}
\newtheorem{example}[theorem]{Example}
\theoremstyle{remark}
\newtheorem{remark}[theorem]{Remark}
\newtheoremstyle{named}{}{}{\itshape}{}{\bfseries}{.}{.5em}{\thmnote{#3}}
\theoremstyle{named}
\newtheorem*{namedtheorem}{Teorema}

\newcommand{\N}{\mathbb{N}}
\newcommand{\Z}{\mathbb{Z}}

\newcommand{\R}{\mathbb{R}}
\newcommand{\C}{\mathbb{C}}
\newcommand{\I}{\textbf{I}}
\newcommand{\Mass}{\textbf{M}}
\newcommand{\Flat}{\mathcal{F}}
\newcommand{\FF}{\mathbf{F}}
\newcommand{\ZZ}{\mathcal{Z}}

\newcommand{\Proj}{\mathbb{P}}
\newcommand{\Sph}{\mathbb{S}}
\newcommand{\Pcal}{\mathcal{P}}
\newcommand{\Scal}{\mathcal{S}}
\newcommand{\Tcal}{\mathcal{T}}
\newcommand{\Zoll}{\mathscr{Z}}

\begin{document}

\begin{abstract}
    We show that manifolds admitting Zoll families of minimal hypersurfaces are diffeomorphic to $\Sph^{n}$ or to $\R\Proj^{n}$, and we characterize their Zoll families in both cases.
    Then, we prove that a metric on $\R\Proj^3$ is surface Zoll if and only if its projective widths are all equal.
    Our last result asserts that a closed surface with the first four widths equal to $2\pi$ is isometric to $\R\Proj^2$ with its constant curvature one metric.
\end{abstract}

\maketitle

\section{Introduction}

A Zoll manifold is a Riemannian manifold whose geodesics are all simple closed curves and have the same length.
The name comes from Otto Zoll, who discovered many nontrivial examples of metrics with this property on the two-dimensional sphere \cite{Zoll}.

Although rigid, the Zoll condition is flexible enough to allow metrics with a trivial isometry group, as shown by Guillemin \cite{Gui}.
At the same time, both constructions in \cite{Zoll} and \cite{Gui} relate Zoll metrics to odd functions, and none of them induces nontrivial metrics on the real projective plane.
This observation reveals its full scope in Green's Theorem \cite{Gre}, which states that every Zoll metric on $\R\Proj^2$ is isometric to the canonical one up to rescaling (see also \cite{Bes} and \cite{LM}).

In higher dimensions, Weinstein was able to expand the ideas in \cite{Zoll} to obtain many nontrivial Zoll metrics on the $n$-sphere $\Sph^n$ for all $n\geq 3$.
On the other hand, Berger proved that the only Zoll metrics on the real projective $n$-space $\R\Proj^n$ are, up to isometry and rescaling, the canonical.
We refer to \cite{Bes} for all these results.

From the topological perspective, it is easy to prove that any Zoll manifold is closed and has a fundamental group of order at most two.
As a consequence, a Zoll surface is a 2-sphere or a real projective plane.
In dimension three, such manifolds are diffeomorphic to $\Sph^3$ or to $\R\Proj^3$ by Geometrization.

This low-dimensional pattern breaks down in dimensions four and beyond, because the canonical metrics on the complex and quaternionic projective spaces $\C\Proj^n$ and $\mathbb{H}\Proj^n$ and on the Cayley plane $\mathbb{O}\Proj^2$ are all Zoll.
There is, however, no known example with a topology different from that of a compact rank one symmetric space (CROSS).
In fact, it is conjectured that every Zoll manifold is diffeomorphic to one of them.
The best known result is a theorem by Bott and Samelson \cite{Bott,Sam} (see also \cite[Theorem 7.23]{Bes}), which establishes that a space that admits a Zoll metric has the integral cohomology ring of a CROSS.

Another possible generalization of Zoll surfaces to higher dimensions replaces geodesics by minimal hypersurfaces.
Roughly speaking, a hypersurface Zoll family in a Riemannian manifold $(M,g)$ is a smoothly parametrized collection $\Zoll$ of closed embedded minimal hypersurfaces such that through each point $x\in M$ and each tangent hyperplane $\Pi\subset T_xM$ passes a unique $\Sigma\in\Zoll$ determined by $T_x\Sigma = \Pi$ (see Definitions \ref{def: transitive families} and \ref{def: hypersurface Zoll manifolds}).
We call the triple $(M,g,\Zoll)$ a hypersurface Zoll manifold, or simply a surface Zoll manifold when the dimension of $M$ is three.

Recently, Ambrozio, Marques, and Neves \cite{AMN1}, and Ambrozio and Guajardo \cite{AG} proved the existence of many nontrivial hypersurface Zoll metrics on $\Sph^n$ and on $\R\Proj^n$.
In \cite{AMN2}, Ambrozio, Marques, and Neves studied their rigidity, in dimension three, from a geometric point of view.

In this paper, we classify hypersurface Zoll manifolds topologically, and prove a stronger analogous version of the Bott-Samelson Theorem for this setting.
Our work answers in all dimensions a conjecture posed by Ambrozio, Marques and Neves that states that every surface Zoll manifold is diffeomorphic to $\Sph^3$ or to $\R\Proj^3$.

The relevant differential-topological object is now a smooth manifold $M$ equipped with a family $\Zoll$ of closed embedded hypersurfaces such that through each point $x\in M$ and each tangent hyperplane $\Pi\subset T_xM$ passes a unique $\Sigma\in\Zoll$ that depends smoothly on $(x,\Pi)$ and is determined by $T_x\Sigma = \Pi$.
The collection $\Zoll$ is called an unoriented transitive family in this case, and has the structure of a smooth manifold of the same dimension as $M$.
Similarly, we call $\Zoll$ an oriented transitive family if we require that the manifold $M$, the elements of $\Zoll$ and $\Pi$ all be oriented (see the precise Definition \ref{def: transitive families}).

\begin{namedtheorem}[Theorem A]
    A smooth $(n+1)$-dimensional manifold that admits an unoriented transitive family $\Zoll$ of closed embedded hypersurfaces is diffeomorphic to $\Sph^{n+1}$ or to $\R\Proj^{n+1}$.
    In the first case, the elements of $\Zoll$ are diffeomorphic to the $n$-sphere, whereas in the second, to the real projective $n$-space.
\end{namedtheorem}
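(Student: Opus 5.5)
The plan is to first pin down the topology of the leaves $\Sigma\in\Zoll$ through a linearization argument, and then to pass from the leaves to $M$ through the double fibration associated with $\Zoll$.

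\textbf{Linearization.} Fix $\Sigma\in\Zoll$. Nearby members of $\Zoll$ are normal graphs over $\Sigma$, so the tangent space $T_\Sigma\Zoll$ is an $(n+1)$-dimensional space $V$ of sections of the normal line bundle $\nu\Sigma$. Uniqueness and smooth dependence in Definition \ref{def: transitive families} give a strong constraint. If $v\in V$ had $v(x)=0$ and $dv(x)=0$ at some point, then, after smoothing this infinitesimal statement into an actual one-parameter family, we would get two distinct elements tangent at $x$, which is impossible. Hence the $1$-jet evaluation map $V\to J^1_x(\nu\Sigma)$ is injective. Since both spaces have dimension $n+1$, it is an isomorphism for every $x\in\Sigma$.

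\textbf{Topology of the leaves.} Consider the map $\phi\colon\Sigma\to\Proj(V^*)\cong\R\Proj^n$ sending $x$ to the line of evaluation functionals at $x$ (well defined up to scale in $\nu_x\Sigma$). Let $\ell_x$ denote this line. The $1$-jet isomorphism says exactly that $\ell_x$ together with the image of $d\ell_x$ spans $V^*$. Equivalently, $\phi$ is an immersion between manifolds of equal dimension $n$, hence a local diffeomorphism. Since $\Sigma$ is closed, $\phi$ is a covering map. For $n\ge2$ this forces $\Sigma\cong\Sph^n$ or $\R\Proj^n$. In the case $n=1$ the leaves are circles anyway. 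One can also read off whether $\nu\Sigma$ is trivial from whether $\phi$ lifts to $\Sph(V^*)$.

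\textbf{Passing to $M$.} Let $P\to M$ be the bundle of tangent hyperplanes, with fiber $\R\Proj^n$ and total dimension $2n+1$. The map $P\to\Zoll$, $(x,\Pi)\mapsto\Sigma_{x,\Pi}$, is a submersion whose fiber over $\Sigma$ is $\Sigma$ itself. It is therefore a fiber bundle with fiber $\Sph^n$ or $\R\Proj^n$, and in particular $P$ and $M$ are compact once $\Zoll$ is compact; compactness of $\Zoll$ is the first thing I would check. The family is self-dual: the sets $\Zoll_x=\{\Sigma\ni x\}\cong\R\Proj^n$ form a transitive family of hypersurfaces in $\Zoll$. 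To identify $M$, I would then construct a Morse-type function with only two critical points. Fix $\Sigma_0$ and use the tubular structure given by the members of $\Zoll$ tangent to a fixed direction field. Alternatively, pass to the universal cover $\widetilde M$ with the lifted oriented family and show the following: a leaf through $x$ separates $\widetilde M$ into two pieces, each swept out by the leaves of $\Zoll_x$ like a ball. That gives $\widetilde M\cong\Sph^{n+1}$ by a twisted-sphere argument, and a deck-group analysis (order at most two, acting freely and preserving the family) gives $M\cong\Sph^{n+1}$ or $\R\Proj^{n+1}$. Finally, I would match the leaf type with the ambient type. Leaves are $\R\Proj^n$ exactly when the antipodal symmetry descends, i.e.\ when $M\cong\R\Proj^{n+1}$, which one reads off from whether a leaf is one-sided or lifts to a separating sphere.

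\textbf{Main obstacle.} The hardest step is this global passage from the leaves to $M$: proving compactness of $\Zoll$ and that $\widetilde M$ is a sphere. The infinitesimal covering argument does not directly control $M$. I would first try to run the same jet argument dually on $(\Zoll,\{\Zoll_x\})$, to show that $\Zoll$ itself is $\Sph^{n+1}$ or $\R\Proj^{n+1}$. I would then transfer this to $M$ via the symmetric double fibration $M\leftarrow P\to\Zoll$.
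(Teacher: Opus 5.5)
You have a genuine gap: the passage from the leaves to $M$ is not carried out, and this is the real content of Theorem A.

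Your first two steps are essentially the paper's Propositions \ref{prop: Zoll dimension reduction} and \ref{prop: Sigma is diffeomorphic to S^n through psi_sigma}; your map $\Sigma\to\Proj(V^*)$ is the paper's $\varphi_\sigma$. Your injectivity argument is not valid as written, though. A variation vanishing to first order at $x$ does not produce two distinct members of $\Zoll$ tangent at $x$. This is easily repaired, either as in the paper (prove surjectivity by rotating the tangent plane at $x$ and differentiating), or by noting that the variations vanishing at $x$ are exactly $\nu_*(\ker\mu_*)$. On that space $\zeta\mapsto d\phi_\zeta(x)$ is injective because $\ker\mu_*\cap\ker\nu_*=0$.

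Several ingredients you invoke for the global step are false or unsupported:
\begin{itemize}
\item The sets $\Zoll_x$ do \emph{not} form a transitive family in general. $T_\sigma\Zoll_x$ is the hyperplane $\varphi_\sigma(x)$, and when the leaves are spheres $\varphi_\sigma$ is two-to-one. So two points $x\neq y$ of $\Sigma_\sigma$ give $\Zoll_x$ and $\Zoll_y$ tangent at $\sigma$, with no reason to coincide. The paper stresses this and only obtains an oriented dual family on the double cover $P^+$, after Theorem A is known. Hence the ``dual jet argument'' fails, and where it does work it only moves the same global problem onto $\Zoll$.
\item Compactness of $\Zoll$ is never proved. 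The paper never needs it; compactness of $M$ is an output.
\item The bound of two on the deck group is unargued.
\item That each side of a leaf is a ball is only asserted. Moreover, a twisted-sphere argument gives only a homeomorphism $M\approx\Sph^{n+1}$. The theorem claims a diffeomorphism, so you must also control the clutching map.
\end{itemize}

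The missing idea is a specific loop in $\Zoll$.
\begin{itemize}
\item Fix $p\in\Sigma$, a nodal set $S\ni p$, and a unit $v\in T_p\Sigma$ normal to $T_pS$. Let $\Sigma_t$ be the member through $p$ with tangent plane $T_pS\oplus\langle\cos(\pi t)v+\sin(\pi t)N_p\rangle$, $t\in[0,1]$. This loop returns to $\Sigma$ with reversed co-orientation.
\item Isotopy extension along the loop shows that two-sided leaves separate, and that the two sides $U_0,U_1$ of $\Sigma$ are carried onto each other.
\item For $t$ near $1$, the region $\overline{U_0}\cap\overline{U_t}$ is a thin lens under a positive graph over a disc in $\Sigma$ bounded by $S_t$. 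That disc is a ball because $\psi_\sigma$ maps nodal sets to equators. So the lens is a ball with a codimension-two corner. Smoothing the corner and isotoping back to $t=0$ gives $\overline{U_0}\cong\bar B^{n+1}$.
\item Along the same loop, $\psi_1=j\circ\psi_0$ with $j$ a reflection. This makes the clutching map isotopic to $j$, so $M$ is diffeomorphic, not merely homeomorphic, to $\Sph^{n+1}$ (Theorem \ref{thm: classification of transitive families of two-sided hypersurfaces}).
\end{itemize}

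The one-sided case needs no universal cover or deck-group analysis. Pass to the double cover in which the leaf becomes two-sided, and extend $\psi_\sigma$ to a diffeomorphism $\overline{U_0}\to\bar B^{n+1}$. This exhibits $M$ as the ball modulo the antipodal map on its boundary (Theorem \ref{thm: classification of transitive families of one-sided hypersurfaces}).
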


We note that no compactness assumption on $M$ is required for the above result.
This parallels the fact that Zoll manifolds are closed.

An unoriented transitive family $\Zoll$ in $M$ forces a duality between points and hypersurfaces.
This is stronger when $M$ is the real projective $n$-space, in the sense that it allows the complete characterization of $\Zoll$ by our methods.
When $M$ is the $n$-sphere, the duality that arises depends on a choice of orientation, and our best description of $\Zoll$ is up to homeomorphism.

\begin{namedtheorem}[Theorem B]
    Suppose $\Zoll$ is an unoriented transitive family of closed embedded hypersurfaces in an $(n+1)$-dimensional manifold $M$.
    If $M\approx \R\Proj^{n+1}$, then $\Zoll$ is diffeomorphic to $\R\Proj^{n+1}$ and has itself an unoriented transitive family of embedded real projective $n$-spaces parametrized by $M$.
    If $M\approx \Sph^{n+1}$, then $\Zoll$ is homeomorphic to $\R\Proj^{n+1}$ and its universal cover has an oriented transitive family of embedded $n$-spheres parametrized by $M$.
\end{namedtheorem}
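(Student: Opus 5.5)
The plan is to exploit the incidence (double fibration) structure of the family. Let $\mathcal{G}$ be the Grassmann bundle of tangent hyperplanes of $M$. The transitivity assumption gives a smooth map $\Phi:\mathcal{G}\to\Zoll$, $(x,\Pi)\mapsto\Sigma$ with $T_x\Sigma=\Pi$. Consider the incidence manifold
\[
I=\{(x,\Sigma)\in M\times\Zoll : x\in\Sigma\}.
\]
We have $I\cong\mathcal{G}$ via $(x,\Pi)\mapsto (x,\Phi(x,\Pi))$. This is an immersion because two hypersurfaces through $x$ with different tangent planes are different elements. So the projection $\pi_1:I\to M$ is the $\R\Proj^{n}$-bundle of hyperplanes. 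The second projection $\pi_2:I\to\Zoll$ is a fibre bundle with fibre $\Sigma$, since $\Zoll$ is a smooth manifold of dimension $n+1$ and $\pi_2$ is a proper submersion.

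The first step is to dualize the family. For a fixed $x\in M$, let
\[
\hat x=\{\Sigma\in\Zoll : x\in\Sigma\}=\Phi(\mathcal{G}_x)\cong\R\Proj^n,
\]
which is embedded in $\Zoll$ as a hypersurface. We then check that $\{\hat x\}_{x\in M}$ is an unoriented transitive family in $\Zoll$. Its tangent hyperplanes at $\Sigma\in\hat x$ should correspond to the points $x\in\Sigma$. By transversality, the linearization at $\Sigma$ of "passing through $x$" is a hyperplane in $T_\Sigma\Zoll$, and $x\mapsto T_\Sigma\hat x$ gives a map $\Sigma\to\Proj(T^*_\Sigma\Zoll)\cong\R\Proj^n$. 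This map is injective because two distinct points $x,y\in\Sigma$ give distinct $\hat x,\hat y$, and the intersection $\hat x\cap\hat y$ (the hypersurfaces through both $x$ and $y$) is transverse. Being an injective immersion of a closed $n$-manifold into $\R\Proj^n$, it is a diffeomorphism, so $\Sigma\cong\R\Proj^n$.

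The second step handles the case $M\approx\R\Proj^{n+1}$. By Theorem A the fibres $\Sigma$ are then projective spaces. The dual family $\{\hat x\}$ is therefore an unoriented transitive family of embedded $\R\Proj^n$'s in $\Zoll$. Applying Theorem A to $\Zoll$, and noting that its hypersurfaces are $\R\Proj^n$ and not spheres, gives $\Zoll\approx\R\Proj^{n+1}$.

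The third step handles the sphere case $M\approx\Sph^{n+1}$, where each $\Sigma\approx\Sph^n$. Now the map $\Sigma\to\Proj(T^*_\Sigma\Zoll)$ fails to be injective; instead it should be a double cover $\Sph^n\to\R\Proj^n$. The two points identified play the role of antipodes, and the duality becomes clean only after orienting. So I would pass to the oriented family $\tilde\Zoll$ of cooriented hypersurfaces, which double covers $\Zoll$, and check that the oriented duals $\hat x$ form an oriented transitive family of $n$-spheres in $\tilde\Zoll$. The step "$\hat x\subset\tilde\Zoll$ is a sphere" follows because it is the fibre of the oriented Grassmannian at $x$.

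By an oriented version of Theorem A (which I expect the proof of Theorem A provides), $\tilde\Zoll\approx\Sph^{n+1}$. The deck involution reverses coorientation and acts freely, so $\Zoll$ is a quotient of $\Sph^{n+1}$ by a free involution. Such a quotient is homotopy equivalent to $\R\Proj^{n+1}$, and only homeomorphic to it in general if the involution is conjugate to the antipodal map. To get homeomorphism, I would build an explicit equivariant map. Fix $x_0\in M$; then $\Sigma\mapsto$ the signed "position" of $x_0$ relative to $\Sigma$ together with the dual coordinates should give an equivariant homeomorphism to $(\Sph^{n+1},-\mathrm{id})$, for instance by coning off along the foliation of $\tilde\Zoll\setminus\{\pm$ the two elements $\hat x_0$-poles$\}$.

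I expect this last point to be the main obstacle: identifying the free involution on $\tilde\Zoll$ as topologically standard. This is presumably why the statement gives only a homeomorphism in the sphere case. The first approach to try is to use the sweepout of $\tilde\Zoll$ by the dual spheres through a point, which are swapped by the involution, in order to construct an equivariant homeomorphism.
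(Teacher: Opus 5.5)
Your architecture matches the paper's: dualize through the double fibration, apply Theorem A to the dual family when $M\approx\R\Proj^{n+1}$, and pass to cooriented hypersurfaces when $M\approx\Sph^{n+1}$. However, the key steps are not justified, and one of them rests on a false statement.

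\textbf{The injectivity step is circular.} In your first step you claim $x\mapsto T_\Sigma\hat x$ is injective because $\hat x\cap\hat y$ is transverse. But for two hypersurfaces $\hat x,\hat y$ of $\Zoll$ meeting at $\Sigma$, transversality at $\Sigma$ \emph{means} $T_\Sigma\hat x\neq T_\Sigma\hat y$, so you are assuming what you want. The argument also never uses the topology of $\Sigma$. It would therefore equally ``prove'' $\Sigma\cong\R\Proj^n$ in the sphere case, contradicting your third step: for equators in $\Sph^{n+1}$ one has $\hat x=\widehat{-x}$. You also do not justify that this map is an immersion.

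The missing input is Proposition~\ref{prop: Zoll dimension reduction}: an infinitesimal variation of $\Sigma$ along $\Zoll$ is determined by its value and first derivative at any point. This makes $x\mapsto T_\Sigma\hat x$ (the paper's $\varphi_\sigma$) a local diffeomorphism, hence a covering $\Sigma\to\Proj T^*_\Sigma\Zoll$. Only then does topology fix the degree. When $M\approx\R\Proj^{n+1}$ the $\Sigma$ are one-sided projective spaces and $\varphi_\sigma$ is a diffeomorphism, since it is covered by the diffeomorphism $\psi_\sigma:\widetilde{\Sigma}\to\Sph^n$. For spheres it is a double cover. The paper then upgrades this to a covering $\varphi:G_n(M)\to G_n(\Zoll)$ of the same order. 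That is what gives a smooth, locally trivial dual foliation rather than just a pointwise bijection.

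\textbf{The sphere case relies on a result that does not exist.} There is no ``oriented version of Theorem A.'' By Example~\ref{example: every oriented manifold has an oriented transitive family of closed hypersurfaces}, small geodesic spheres form an oriented transitive family of embedded $n$-spheres in \emph{every} oriented manifold, so oriented transitivity alone says nothing about $\tilde{\Zoll}$. The paper instead compares the homotopy exact sequences of the two $\Sph^n$-bundles $G^+_n(M)\to M\approx\Sph^{n+1}$ and $G^+_n(M)\to\tilde{\Zoll}$. This gives $\pi_k(\tilde{\Zoll})=0$ for $1\le k\le n-1$, and the generalized Poincar\'e conjecture then makes $\tilde{\Zoll}$ homeomorphic to $\Sph^{n+1}$. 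This step, together with Schoenflies, is where ``diffeomorphic'' degrades to ``homeomorphic''; the involution is not the cause.

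Transitivity of the oriented dual family also needs an argument; knowing that $\hat x$ is the image of a fibre only shows it is an embedded sphere. The paper uses a count: $\varphi$ is $2$-to-$1$, $G^+_n(M)\to G_n(M)$ is $2$-to-$1$, and $G^+_n(\tilde{\Zoll})\to G_n(\Zoll)$ is $4$-to-$1$. This forces the oriented map $G^+_n(M)\to G^+_n(\tilde{\Zoll})$ to be a diffeomorphism.

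\textbf{The involution is easier than you expect.} Once $\tilde{\Zoll}$ is known to be a topological sphere, take a single dual sphere $\hat x_0\subset\tilde{\Zoll}$.
\begin{itemize}
\item It is invariant under the involution, which acts on it as the antipodal map of the oriented Grassmannian fibre over $x_0$.
\item By the generalized Schoenflies theorem it bounds two closed balls.
\item The involution exchanges these balls: your signed position of $x_0$ has opposite signs on them and is reversed by the involution.
\end{itemize}
Coning the boundary identification then shows that $\Zoll$ is a closed ball with antipodal boundary points identified, i.e.\ homeomorphic to $\R\Proj^{n+1}$.
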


We suspect that, even when $M\approx \Sph^{n+1}$, $\Zoll$ is diffeomorphic to $\R\Proj^{n+1}$.

It is important to remark that we define oriented and unoriented transitive families of hypersurfaces on manifolds of dimension $n+1\geq 2$, but we only prove Theorems A and B for $n+1\geq 3$.
Both Theorems A and B hold when $n+1=2$, and are proved in the work of LeBrun and Mason \cite{LM} (see Remark \ref{rmk: the case n+1=2} below).

A hypersurface Zoll family in a Riemannian manifold is an unoriented transitive family of closed embedded minimal hypersurfaces.
Therefore, Theorems A and B immediately classify their topology up to diffeomorphism.

\begin{namedtheorem}[Corollary]
    A hypersurface Zoll $(n+1)$-manifold $(M,g,\Zoll)$ is diffeomorphic to the $(n+1)$-sphere or to the real projective $(n+1)$-space.
    In the first case, the Zoll hypersurfaces are $n$-spheres, $\Zoll$ is homeomorphic to $\R\Proj^{n+1}$ and its universal cover has an oriented transitive  family of embedded $n$-spheres parametrized by $M$.
    In the second, the Zoll hypersurfaces are real projective $n$-spaces, the family $\Zoll$ is diffeomorphic to $\R\Proj^{n+1}$ and has itself a transitive family of embedded projective $n$-spaces parametrized by $M$.
\end{namedtheorem}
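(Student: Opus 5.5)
The Corollary follows directly from Theorems A and B once we check that a hypersurface Zoll manifold satisfies their hypotheses. So the plan is to verify that a hypersurface Zoll family $\Zoll$ in $(M,g)$, in the sense of Definition \ref{def: hypersurface Zoll manifolds}, is an unoriented transitive family of closed embedded hypersurfaces in the sense of Definition \ref{def: transitive families}. By design, a hypersurface Zoll family is such a family with the extra condition that every member is minimal for $g$. The required properties are:
\begin{itemize}
\item each $\Sigma\in\Zoll$ is a closed embedded hypersurface;
\item for every $x\in M$ and every hyperplane $\Pi\subset T_xM$ there is a unique $\Sigma$ with $x\in\Sigma$ and $T_x\Sigma=\Pi$;
\item this $\Sigma$ depends smoothly on $(x,\Pi)$, so that $\Zoll$ is parametrized smoothly by the projectivized tangent bundle.
\end{itemize}
All of these are part of the definition of a hypersurface Zoll family. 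The only care needed is that the smoothness notion in Definition \ref{def: hypersurface Zoll manifolds} (a smooth parametrization of the minimal hypersurfaces) matches the one in Definition \ref{def: transitive families}. I expect this to be immediate, since the latter was phrased precisely to abstract the former, and the metric plays no further role.

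Once the hypotheses are checked, Theorem A gives $M\approx\Sph^{n+1}$ or $M\approx\R\Proj^{n+1}$. In the sphere case the Zoll hypersurfaces are $n$-spheres, and in the projective case they are real projective $n$-spaces.

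Theorem B then supplies the statements about $\Zoll$ itself:
\begin{itemize}
\item If $M\approx\Sph^{n+1}$, then $\Zoll$ is homeomorphic to $\R\Proj^{n+1}$, and its universal cover carries an oriented transitive family of embedded $n$-spheres parametrized by $M$.
\item If $M\approx\R\Proj^{n+1}$, then $\Zoll$ is diffeomorphic to $\R\Proj^{n+1}$ and carries an unoriented transitive family of embedded projective $n$-spaces parametrized by $M$.
\end{itemize}

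The main obstacle is dimension. Theorems A and B are proved here only for $n+1\geq 3$. For $n+1=2$ we invoke Remark \ref{rmk: the case n+1=2} and LeBrun--Mason \cite{LM}, which establish both theorems for surfaces, so the Corollary holds in all dimensions $n+1\geq 2$.
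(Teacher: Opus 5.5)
Your proposal is correct and is exactly the paper's argument: Definition \ref{def: hypersurface Zoll manifolds} defines a hypersurface Zoll family literally as an unoriented transitive family (Definition \ref{def: transitive families}) of closed embedded minimal hypersurfaces, so the hypothesis check is tautological and the Corollary is immediate from Theorems A and B (your informal bullet list leaves out conditions (i) and (iii) of that definition, but nothing actually needs verifying). Your treatment of the case $n+1=2$ via LeBrun--Mason, as in Remark \ref{rmk: the case n+1=2}, also agrees with the paper.
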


In dimension three, a surface Zoll family will always be diffeomorphic to the real projective $3$-space.
In fact, our results and the G\'alvez-Mira uniqueness theorem \cite{GM} completely describe the topology of surface Zoll manifolds.

\begin{namedtheorem}[Corollary]
    Let $g$ be a surface Zoll metric on a smooth 3-manifold $M$.
    Then there is a unique surface Zoll family $\Zoll$ relative to $g$.
    When $M$ is $\Sph^3$, all its immersed minimal 2-spheres are embedded and belong to $\Zoll$.
    When $M$ is $\R\Proj^3$, every immersed minimal 2-sphere is a double cover of an element of $\Zoll$.
\end{namedtheorem}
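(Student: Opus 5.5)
Granting Theorems A and B and the first Corollary, the plan for this Corollary rests on one fact: an immersed minimal $2$-sphere in a $3$-manifold is determined by its $2$-jet (up to first order: a point and a tangent plane) when it is compared with embedded minimal spheres through the same contact data. The relevant input is the Gálvez--Mira uniqueness theorem \cite{GM}. It states, roughly, that if a $3$-manifold carries a smooth family of immersed spheres solving an elliptic equation (here, minimality), with exactly one sphere through each point and tangent plane, then every immersed sphere solving the same equation belongs to the family. I would therefore place ourselves in the Gálvez--Mira setting and let uniqueness do the work.

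\textbf{Case $M\approx\Sph^3$.} By Theorem A, the elements of any surface Zoll family $\Zoll$ are embedded $2$-spheres, and through each $(x,\Pi)$ passes exactly one of them. This is precisely the existence hypothesis of \cite{GM}. Hence every immersed minimal sphere is an element of $\Zoll$, and in particular it is embedded. Now let $\Zoll'$ be a second surface Zoll family for $g$. Its elements are again minimal spheres, so they lie in $\Zoll$. Since both families have exactly one member through each $(x,\Pi)$, we get $\Zoll'=\Zoll$.

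\textbf{Case $M\approx\R\Proj^3$.} Here Theorem A says the elements of $\Zoll$ are embedded projective planes. I would lift to the universal cover $\pi:\Sph^3\to M$ with the lifted metric. The preimage $\pi^{-1}(\Sigma)$ of a one-sided $\R\Proj^2$ is connected and is an embedded minimal sphere double covering $\Sigma$. The lifts form a family $\tilde\Zoll$ with exactly one member through each point and plane of $\Sph^3$, since the contact data $(\tilde x,\Pi)$ projects to unique data $(x,d\pi\,\Pi)$ in $M$. So $\tilde\Zoll$ is a surface Zoll family upstairs, and the sphere case applies.

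Now take an immersed minimal sphere $f:\Sph^2\to M$. Since $\Sph^2$ is simply connected, $f$ lifts to $\tilde f:\Sph^2\to\Sph^3$, which is again a minimal sphere. By the sphere case, the image of $\tilde f$ equals some $\pi^{-1}(\Sigma)$. Because $\tilde f$ is an embedding onto that sphere, $f=\pi\circ\tilde f$ is a double cover of $\Sigma$. For uniqueness, let $\Zoll'$ be another surface Zoll family. Its lift, built the same way, is a surface Zoll family on $\Sph^3$ and so equals $\tilde\Zoll$; projecting back down gives $\Zoll'=\Zoll$.

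\textbf{Main obstacle.} The hard part is checking that the hypotheses of \cite{GM} really hold. Besides existence and uniqueness of a sphere for each contact datum, it requires that the family depend smoothly, or at least continuously, on $(x,\Pi)$. The definition of a transitive family supplies this, and Theorem B supplies the $\R\Proj^3$ topology of $\Zoll$ needed for the identification with the Grassmann bundle. I would verify this compatibility first, including the detail that the elements of the family are oriented spheres parametrized by the sphere bundle of $M$.
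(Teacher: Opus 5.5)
Your proposal is correct and follows essentially the paper's route. The $\Sph^3$ case is the Gálvez--Mira uniqueness theorem, which the paper invokes through Proposition 5.1 of \cite{AMN2}; for $\R\Proj^3$, one lifts $\Zoll$ to a Zoll family of spheres in $\Sph^3$ and lifts immersed minimal spheres using simple connectivity, exactly as you do. Uniqueness of $\Zoll$ then follows, as you say, because every member of a second family is a minimal sphere, or a projective plane whose preimage is one, and so already lies in $\Zoll$; the input actually needed is Theorem A, not Theorem B.
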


Therefore, we can refer to a surface Zoll manifold as a pair $(M,g)$ instead of a triple $(M,g,\Zoll)$.
It would be interesting to prove the uniqueness of hypersurface Zoll families in all dimensions.
One difficulty is that even with the canonical metric on $\Sph^4$, there exist minimal 3-spheres that are not equators.

After the classification of hypersurface Zoll manifolds up to diffeomorphism, the second part of this work continues the geometric characterization started in \cite{AMN2}.
There, Ambrozio, Marques, and Neves defined four min-max numbers, the spherical widths (see also \cite{HasKet}), associated to a Riemannian metric on $\Sph^3$, and proved that a metric is surface Zoll if and only if the equality of these four numbers holds.

It is also possible to define four projective widths associated with a metric $g$ on the real projective $3$-space.
They are denoted as $0<\sigma_0(\R\Proj^3,g)\leq\cdots\leq\sigma_3(\R\Proj^3,g)$, and $\sigma_0(\R\Proj^3,g)$ is the least area of an embedded $\R\Proj^2$.
In analogy with the Ambrozio-Marques-Neves Theorem, we prove a rigidity result for the real projective $3$-space.

\begin{namedtheorem}[Theorem C]
    A Riemannian metric $g$ on $\R\Proj^3$ is surface Zoll if and only if
    \[ \sigma_0(\R\Proj^3,g) = \sigma_1(\R\Proj^3,g) = \sigma_2(\R\Proj^3,g) = \sigma_3(\R\Proj^3,g). \]
\end{namedtheorem}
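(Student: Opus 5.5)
We follow the strategy of Ambrozio, Marques and Neves for $\Sph^3$, adapted to the projective setting. The projective widths $\sigma_0\le\cdots\le\sigma_3$ are defined by min-max over families of embedded projective planes (and possibly degenerate limits) parametrized by $\R\Proj^k$-type sweepouts. The natural space is the space of all embedded $\R\Proj^2$'s, or unoriented "projective cycles", in $\R\Proj^3$. Its model is the dual projective space $\R\Proj^3\cong\Zoll_{\mathrm{can}}$ of planes in the round metric. The cohomology class $\lambda\in H^1(\,\cdot\,;\Z_2)$ detects the nontrivial loop of planes, and $\sigma_k$ is the min-max over families detecting $\lambda^k$.

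For the direction ``Zoll $\Rightarrow$ equal widths'', we take the surface Zoll family $\Zoll$. By the Corollary above it is unique, diffeomorphic to $\R\Proj^3$, and consists of embedded $\R\Proj^2$'s. First we show that all elements of $\Zoll$ have the same area $A$. The Jacobi fields along elements of $\Zoll$ come from the smooth family, so the first variation of area vanishes along $\Zoll$, and area is constant on the connected manifold $\Zoll$. Next, $\Zoll$ itself, as a map $\R\Proj^3\to\{\text{projective surfaces}\}$, is a sweepout detecting $\lambda^3$. This uses the duality of Theorem B: $\Zoll$ carries a transitive family parametrized by $M$, which identifies it with the standard family up to homotopy. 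Hence $\sigma_3\le A$. Finally, $\sigma_0$ is the least area of an embedded $\R\Proj^2$. Any area-minimizer in its isotopy/homotopy class is a smooth embedded minimal $\R\Proj^2$, and its double cover is an immersed minimal sphere. By the Corollary this double cover covers an element of $\Zoll$, so $\sigma_0=A$. Together these give $A=\sigma_0\le\cdots\le\sigma_3\le A$.

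For the converse, assume $\sigma_0=\sigma_3=:A$. We use a Lusternik--Schnirelmann argument. If $\sigma_k=\sigma_{k+1}$, the set of minimal projective planes (or min-max limits) of area $A$ is not detected by $\lambda$-trivial neighborhoods, so it has $\lambda^{1}$ nonzero on it. Iterating over the full equality, the critical set $\mathcal{C}$ of embedded minimal $\R\Proj^2$'s of area $A$ satisfies $\lambda^3|_{\mathcal{C}}\ne0$.

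Regularity needs care: min-max via Almgren--Pitts/Simon--Smith in the genus-restricted projective class produces embedded minimal surfaces. Area equal to $\sigma_0$ forces each limit to be a single smooth area-minimizing projective plane, ruling out multiplicity and degeneration. These minimizers are then stable in the $\Z_2$-twisted sense and smoothly compact.

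Next we show that $\mathcal{C}$ is transitive, that is, through every $(x,\Pi)$ passes an element. The idea is that if $(x,\Pi)$ were missed, the subset of $\mathcal{C}$ through $x$ would miss a hyperplane direction. Then the Gauss-type map $\mathcal{C}_x\to\R\Proj^2=\Proj(T_xM)$, which should pull back $\lambda^2$ nontrivially, would fail to be onto, and we would derive $\lambda^3|_{\mathcal{C}}=0$ by restricting to planes through a point (cup-length argument). Uniqueness follows because two distinct minimal surfaces tangent at $x$ intersect along a nodal-type set, and area-minimizing projective planes can be cut-and-pasted to produce a surface of area $\le A$ with a singularity, contradicting minimality. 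Smoothness of the dependence on $(x,\Pi)$ follows from nondegeneracy, obtained as in AMN by showing that the Jacobi fields span exactly a three-dimensional space.

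The main obstacle is this converse step, especially upgrading the topological nontriviality of $\mathcal{C}$ to genuine transitivity and smoothness of the family. There I would first try to adapt the AMN argument verbatim, replacing spheres by their double covers. Working on the double cover $\Sph^3$ with the lifted invariant metric, minimal $\R\Proj^2$'s lift to $\Z_2$-invariant minimal spheres, and the AMN machinery should then apply equivariantly.
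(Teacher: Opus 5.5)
Your forward direction is essentially the paper's: area is constant on $\Zoll$, $\Zoll$ is a 3-sweepout, and the $\sigma_0$-minimizer lies in $\Zoll$ by the G\'alvez--Mira corollary. One correction: Theorem B does not identify $\Zoll$ with the standard family up to homotopy. You only need $\lambda$ to pull back nontrivially to $P$. The paper gets this from the loop obtained by rotating $\Sigma$ about a nodal line, which exchanges the two sides of the lifted sphere, and then uses $P\approx\R\Proj^3$ to get the cube nonzero.

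The converse, however, has a genuine gap, and your uniqueness step fails. The cut-and-paste you describe does not produce anything of area $\le A$ for one-sided surfaces. Write $\pi^{-1}(\Sigma_j)=\partial U_j$ in $\Sph^3$. The antipodal map sends $U_j$ to its complement, so it swaps $\partial(U_1\cap U_2)$ and $\partial(U_1\cup U_2)$. Neither descends to $\R\Proj^3$, and each projects onto all of $\Sigma_1\cup\Sigma_2$, which has area $2A$; resolving $\Sigma_1\cup\Sigma_2$ directly in $\R\Proj^3$ likewise gives area $2A$. Moreover, $\sigma_0$ minimizes area only among embedded projective planes, not in $H_2(\R\Proj^3;\Z_2)$. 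So even a cheaper singular surface of some other topology would contradict nothing.

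What is missing is the ingredient that makes the critical set a smooth closed 3-parameter family and yields transitivity and uniqueness. Deferring to ``AMN equivariantly'' does not supply it, because that deformation argument needs a nullity bound $\le 3$ for the critical surfaces as input, and this is not free on the equivariant double cover. The paper proves the bound as follows.
\begin{itemize}
\item Let $\Sigma$ be a stable minimal $\R\Proj^2$ and $\widetilde\Sigma=\pi^{-1}(\Sigma)$. By Cheng, second eigenfunctions of the Jacobi operator of $\widetilde\Sigma$ have simple smooth nodal curves, so they are determined by their $1$-jet at a point, and the second eigenspace has dimension $\le 3$.
\item The antipodal map acts on this eigenspace as an involution, so some second eigenfunction satisfies $u=-u\circ i$ and descends to a normal section over $\Sigma$.
\item Stability then forces the second eigenvalue to be $\ge 0$, so $\widetilde\Sigma$ has index $\le 1$ and nullity $\le 3$.
\end{itemize}
With White's local structure theorem this defines the good set $\mathcal{G}$, the AMN deformation pushes sweepouts into it, and $\lambda^3|_{\mathcal{G}}\ne 0$ produces a closed component $\Zoll$.

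Transitivity and uniqueness then come together, with no cut-and-paste or cup-length argument. The map $(\Sigma,x,\mathcal{O})\mapsto(x,T_x\Sigma,\mathcal{O})$ into $G^+_2(\R\Proj^3)$ is locally injective, because a limiting Jacobi field on $\widetilde\Sigma$ vanishing to first order at a point is impossible when the index is $\le 1$. It is therefore a covering of closed $5$-manifolds, and lifting to the simply connected $G^+_2(\Sph^3)\cong\Sph^3\times\Sph^2$ shows it is a diffeomorphism.

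Two further steps are only asserted. Your claim that limits at level $\sigma_0$ are single projective planes needs the Meeks--Simon--Yau surgery argument showing that non-orientability survives in the limit. Your Gauss-map existence argument is also unsupported.
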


Going beyond widths associated to the area of surfaces with fixed topology in three-manifolds, hypersurface Zoll manifolds are likely to appear as rigid cases in the Almgren-Pitts min-max theory of minimal hypersurfaces.
In this theory, one works in a closed Riemannian manifold $(M,g)$ and obtains the volume spectrum $\{\omega_p(M,g)\}_{p\in\N}$.
When the ambient dimension is between three and seven, each $p$-width is realized as a sum
\[ \omega_p(M,g) = \sum_{i=1}^{k_p} m^{(p)}_i area\big(\Sigma^{(p)}_i\big) \]
for some disjoint collection of closed smoothly embedded minimal hypersurfaces $ \Sigma^{(p)}_1,...,$ $\Sigma^{(p)}_{k_p}$ with respective integer multiplicities $m^{(p)}_1,...,$ $m^{(p)}_{k_p}$.
In higher dimensions, the same description is true, but each $\Sigma^{(p)}_i$ might have singularities of Hausdorff codimension at least seven (see \cite{Pitts}, and also \cite{MN-Ricci}).

In analogy to the Laplace spectrum, the volume spectrum of a manifold also satisfies a Weyl law \cite{LMN}, which is fundamental in the solution of the Yau Conjecture (see \cite{IMN}, \cite{MN-Ricci}, \cite{Song}, and \cite{Zhou}).
For generic metrics, the volume spectrum is a strictly increasing sequence of positive real numbers, and its asymptotic behavior is one of the few informations one has.

When the metric has two equal consecutive widths, however, the construction of many minimal hypersurfaces becomes simpler and relies on a Lusternik-Schnirelman argument (see \cite{MN-Ricci}).
This idea can be refined when more widths coincide, and the philosophy is that the equality $\omega_1=\omega_{n+1}$ for $n=\dim M$ should imply the existence of an $n$-sweepout of $(M,g)$ by closed minimal hypersurfaces, a property that resembles the hypersurface Zoll condition.

This idea was made precise in \cite{AMN2} in dimension two.
There, they proved that a closed Riemannian surface $(M,g)$ with $\omega_1(M,g)  = \omega_3(M,g)$ is either a Zoll sphere or a real projective plane with systole equal to $\omega_1(M,g)/2$.
It then follows as a consequence of Pu's systolic inequality \cite{Pu}, the Weyl law, and a theorem of Weinstein \cite{Wein} that the real projective plane with its canonical metric is determined by its volume spectrum up to isometry.

As remarkable as it is, the relation between the widths and the existence of Zoll metrics suggests that it should be possible to prove the spectral rigidity of the real projective plane using Green's Theorem instead of Pu's inequality.
This new route is supported by the computations done in \cite{Marx-Kuo}.
There, Marx-Kuo computed all the widths of $\R\Proj^2$ with its canonical metric, and showed, as a consequence, that the first five of them are equal.

\begin{namedtheorem}[Theorem D]
    A closed Riemannian surface $(M,g)$ is isometric to the real projective plane with its constant curvature one metric if and only if 
    \[ \omega_1(M,g) = \omega_2(M,g) = \omega_3(M,g) = \omega_4(M,g) = 2\pi. \]
\end{namedtheorem}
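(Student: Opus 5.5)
The forward direction I would take directly from Marx-Kuo's computation \cite{Marx-Kuo}: for the round $\R\Proj^2$ of curvature one the first five widths all equal $2\pi$, the length of a projective line counted twice, or equivalently of a great circle lifted to $\Sph^2$. So the work is in the converse. Let $(M,g)$ be a closed surface with $\omega_1=\cdots=\omega_4=2\pi$. The plan is to apply the Lusternik--Schnirelmann philosophy to the equality $\omega_1=\omega_3$, as in \cite{AMN2}. This gives the dichotomy: either $(M,g)$ is a Zoll sphere with closed geodesics of length $\omega_1$, or $M\approx\R\Proj^2$ and the systole is $\pi$, realized by a two-parameter, indeed Zoll-type, family of one-sided geodesics of length $\pi$.

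The extra hypothesis $\omega_3=\omega_4$ is what rules out the sphere. In the sphere case I would argue that the one-dimensional family of geodesics through a point, together with the Zoll structure, yields a family of cycles whose min-max value $\omega_4$ could only equal $\omega_1$ if there were a $4$-parameter degenerate family of critical varifolds of mass $2\pi$. A Lusternik--Schnirelmann argument in the Almgren--Pitts setting (cf. \cite{MN-Ricci}) would then give a critical set of covering dimension at least $3$ among stationary geodesic networks of length $2\pi$. For a Zoll sphere the space of simple closed geodesics is $\R\Proj^2$-like, hence two-dimensional. I would therefore show that the remaining candidates (multiplicity-two curves, geodesic networks) cannot fill an extra dimension, using the fact that each point-and-direction lies on a unique simple closed geodesic. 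This yields a contradiction.

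This leaves $M\approx\R\Proj^2$. Here the next step is to upgrade ``systole equal to $\omega_1/2$ with equal widths'' to the statement that every geodesic is closed of length $\pi$. I would do this by exhibiting, through each point and direction, a minimizing one-sided closed geodesic of length $\pi$, using the $3$-dimensional critical set produced from $\omega_1=\omega_4$, so that the metric is Zoll. Green's Theorem \cite{Gre} then says $g$ is isometric to a constant curvature metric, and the normalization of the length to $\pi$ (equivalently $\omega_1=2\pi$) fixes the curvature to be one.

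The main obstacle is the Lusternik--Schnirelmann step. One must show that equality of widths forces a family of critical geodesic networks of the right dimension, and then that in dimension two these networks are closed geodesics passing through every point in every direction. I would try first to adapt the argument of \cite{AMN2} for $\omega_1=\omega_3$, replacing their $2$-sweepout by a $4$-sweepout. The delicate part will be excluding nonsimple stationary networks of total length $2\pi$.
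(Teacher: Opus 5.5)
Your overall architecture matches the paper's. It has the same stages: Marx-Kuo for the forward direction, the $\omega_1=\omega_3$ dichotomy of \cite{AMN2}, and a Lusternik--Schnirelmann step showing that an optimal $4$-sweepout restricts, near the stationary networks of length $2\pi$, to a $3$-sweepout. (The paper splits the domain into a part far from these networks in a combined flat/varifold distance, shows via Pitts and \cite[Proposition 4.10]{MN-index} that this part carries no $1$-sweepout, and uses $\overline{\lambda}^4=\overline{\lambda}\smile\overline{\lambda}^3$.) Then come exclusion of the Zoll sphere and finally Zoll plus Green on $\R\Proj^2$. In the sphere case you need not rule out the other candidates by hand. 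Since the $3$-sweepouts are also $2$-sweepouts, \cite[Propositions 4.6--4.12]{AMN2} already force every critical varifold to be a multiplicity-one simple closed geodesic of length $2\pi$. The contradiction must then be made at the level of cycles, not by a covering-dimension count on a set of varifolds. The paper interpolates graphically, cell by cell, to push the $3$-sweepouts into the space $\Scal$ of simple closed curves, which is homotopy equivalent to $\R\Proj^2$ by \cite{Smale}, so $\overline{\lambda}^3$ pulls back to zero.

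The genuine gap is the $\R\Proj^2$ case. The $\omega_1=\omega_3$ step gives only systole $\pi$, not a Zoll-type family; otherwise Green's theorem would finish without $\omega_4$. So everything rests on your upgrade, for which you offer only ``use the $3$-dimensional critical set.'' Dimension cannot force a length-$\pi$ geodesic through every point and direction. Multiply the round metric by $e^{2u}$ with $u\geq 0$, $u\not\equiv 0$, supported in a small disk. The round lines missing that disk stay systolic, and their pairs form a $4$-dimensional family of stationary networks of length $2\pi$, yet the metric is not Zoll.

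The missing idea is to use $\overline{\lambda}^3\neq 0$ as a ham-sandwich statement: for any open $\Omega_1,\Omega_2,\Omega_3$ with disjoint closures, some $\Phi_i(x_i)=\partial U_i$ bisects the area of each. In the limit $|\partial U_i|\to\gamma_1+\gamma_2$ or $2\gamma$. The case $2\gamma$ is excluded because the one-sided $\gamma$ bounds nothing, which would force $U\in\{\emptyset,M\}$, contradicting the bisection. So $\gamma_1\cup\gamma_2$ meets each $\overline{\Omega}_j$, and shrinking the $\Omega_j$ to points shows that any three points lie on the union of at most two closed geodesics of length $\pi$. Apply this to $p$, $\exp_p(v/n)$, $\exp_p(v)$. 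Pigeonhole and compactness then give a length-$\pi$ closed geodesic through any two points, and then through any point in any direction. That is the Zoll property you need before invoking Green.
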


We note that, at the time of writing this paper, $\Sph^n$ is the only known closed manifold that admits a metric with equal first $n+1$ widths for $n\geq 3$.
Optimistically, one may suspect that it is the only one (see also \cite{BA} for what is known about $\R\Proj^3$).
Whether this is true or not, we hope that our results will help future work on this problem. 

\subsection*{Organization of the paper}

In Section 2, we prove Theorem A.
There, we give the precise definitions of (un)oriented transitive families, show some examples and classify their topologies up to diffeomorphism.
We separate the proof in two cases, depending on whether the elements of the unoriented transitive family are two-sided or one-sided.
Theorem A follows by the combination of Theorem \ref{thm: classification of transitive families of two-sided hypersurfaces} and Theorem \ref{thm: classification of transitive families of one-sided hypersurfaces}.

Section 3 is devoted to the proof of Theorem B.
The case $M\approx \R\Proj^{n+1}$ turns out to be the simplest, and we give the classification up to diffeomorphism.
When $M\approx \Sph^{n+1}$, the arguments used in Section 2 do not apply, so we can only give the classification up to homeomorphism.
Theorem B is a direct consequence of Corollary \ref{cor: Theorem B for M=RP^(n+1)}, Proposition \ref{prop: psi is a covering map} and Proposition \ref{prop: P is homeomorphic to RP^(n+1) when M=S^(n+1)}.

Section 4 starts the geometric part of the paper.
There, we recall some notation from min-max theory, give the definition of a hypersurface Zoll manifold and prove that every surface Zoll metric on $\R\Proj^3$ has equal projective widths.
We also discuss the Corollaries following Theorems A and B.
Section 5 continues with the proof of the last implication remaining to conclude Theorem C.

Section 6 finishes the paper with the proof of Theorem D.
That the canonical metric has equal first four widths was shown in \cite{Marx-Kuo}.
We prove the other implication.

\subsection*{AI Disclosure} We used the free online version of Anthropic's Claude to generate the TikZ code for Figure 1 and we adapted it to obtain Figure 2.
Google's Gemini was used to help with the structural formatting of the references list and with some minor spell-checking and grammatical corrections.

\subsection*{Acknowledgments}
I am very grateful to my advisor Lucas Ambrozio, who supported, taught, and inspired me over the past few years, and for all the valuable meetings we had.
I would like to thank Andr\'e Neves for several discussions, and for hosting me at the University of Chicago during the third year of my Ph.D.
I also thank Diego Guajardo for all the mathematical conversations we had.
While doing this research, I was supported by CAPES, under the scholarship of the Programa Doutorado-sandu\'iche no Exterior (88881.126120/2025-01), and by CNPq, under the doctoral scholarship 141382/2023-6.


\section{Manifolds with transitive families of hypersurfaces}

In this section, all manifolds and submanifolds are assumed to be smooth, connected and without boundary unless otherwise stated.

Let $M$ be a manifold of dimension $n+1\geq 2$.
Denote by $G_n(M)$ and $G^+_n(M)$ the Grassmanian bundles of unoriented and of oriented hyperplanes, respectively.
Write $\mu:G_n(M)\to M$ and $\mu^+:G^+_n(M)\to M$ for the canonical projections.

For any immersed hypersurface $\Sigma$ in $M$, we define its \textit{(unoriented) Legendrian lift} as $\mathcal{L}_\Sigma:=\{(x,T_x\Sigma):x\in\Sigma\}\subset G_n(M)$.
When $\Sigma$ is oriented, we define its \textit{oriented Legendrian lift} as $\mathcal{L}^+_\Sigma:=\{(x,T_x\Sigma):x\in\Sigma\}\subset G^+_n(M)$, where we understand $T_x\Sigma$ with the induced orientation of $\Sigma$.

The next definition resembles the one in \cite{GM}, except for item (iii) below, which is closer to the description in \cite{LM}.

\begin{definition}
    An \textit{unoriented transitive family of hypersurfaces} in $M$ is a collection $\Zoll$ of immersed hypersurfaces with the following three properties:
    \begin{itemize}
        \item[(i)] The Legendrian lift of every $\Sigma\in\Zoll$ is embedded in $G_n(M)$.
        \item[(ii)] The set $\{\mathcal{L}_\Sigma:\Sigma\in\Zoll\}$ is a smooth foliation of $G_n(M)$.
        In particular, for every $(x,\Pi)\in G_n(M)$, there is a unique $\Sigma = \Sigma(x,\Pi)\in \Zoll$ such that $T_x\Sigma = \Pi$.
        \item[(iii)] The foliation $\{\mathcal{L}_\Sigma:\Sigma\in \Zoll\}$ is \textit{locally trivial}, in the sense that for any fixed $\Sigma\in \Zoll$, there exists a neighborhood $U$ of $\mathcal{L}_\Sigma$ in $G_n(M)$ that is diffeomorphic to the product $\Sigma\times \R^{n+1}$ in such a way that $\mathcal{L}_\Sigma$ corresponds to $\Sigma\times\{0\}$ and every $\Sigma\times\{\textnormal{pt}\}$ is the image of the Legendrian lift of some $\Sigma'\in\Zoll$.
    \end{itemize}
    When $M$ is oriented, we say that a collection of immersed oriented hypersurfaces is an \textit{oriented transitive family of hypersurfaces} in $M$ if it has the three properties above, but with $G_n(M)$ and $\mathcal{L}_\Sigma$ replaced respectively by $G^+_n(M)$ and $\mathcal{L}^+_\Sigma$.
    \label{def: transitive families}
\end{definition}

For our applications, we will assume that all elements of a transitive family $\Zoll$ are closed.
We will always make this assumption explicit.

Let $\Zoll$ be an unoriented (resp. oriented) transitive family of hypersurfaces in $M$.
As an immediate consequence, the set $P$ (resp. $P^+$) of leaves of the foliation $\{\mathcal{L}_\Sigma:\Sigma\in \Zoll\}$ (resp. $\{\mathcal{L}^+_\Sigma:\Sigma\in \Zoll\}$) has the structure of a connected smooth manifold of dimension $n+1$.
Moreover, the map $\nu:G_n(M)\to P$ (resp. $\nu^+:G^+_n(M)\to P^+$) that collapses the leaves induces a smooth fiber bundle with fibers diffeomorphic to elements of $\Zoll$.

We call $P$ (resp. $P^+$) the \textit{parameter space} of the transitive family $\Zoll$.

The pair of projections $\mu:G_n(M)\to M$ and $\nu:G_n(M)\to P$ (resp. $\mu^+:G^+_n(M)\to M$ and $\nu^+:G^+_n(M)\to P^+$) forms a double fibration.
This means that both are fiber bundles and their derivatives are orthogonal.
In mathematical notation, this writes as $\ker\mu_*\cap\ker\nu_* = 0$ (resp. $\ker\mu^+_*\cap\ker\nu^+_* = 0$).

When $\Zoll$ is unoriented, there exists a duality between $M$ and $P$ close to the projective duality between points and hyperplanes in projective geometry.
Any point $\sigma\in P$ corresponds to the hypersurface $\Sigma_\sigma := \mu(\nu^{-1}(\sigma))\in\Zoll$, and any point $x\in M$, to the immersed real projective $n$-space $\Sigma^*_x:= \nu(\mu^{-1}(x))$ in $P$.

When $\Zoll$ is oriented, the duality that arises between $M$ and $P^+$ resembles that of points and oriented equators on the $(n+1)$-sphere.
Any point $\sigma\in P^+$ corresponds to an oriented immersed hypersurface $\Sigma_\sigma\subset M$; and a point $x\in M$ corresponds to an oriented immersed sphere $\Sigma^+_x:=\nu^+\left((\mu^+)^{-1}(x)\right)$.
We note that, since $M$ is oriented, $G^+_n(M)$ is orientable, and henceforth we fix an orientation of $G^+_n(M)$, which is equivalent to taking smoothly compatible orientations on the fibers of $G^+_n(M)$.
The orientation of $\Sigma^+_x$ comes from this.

By construction, the map $\sigma\in P\mapsto \Sigma_\sigma\in\Zoll$ (resp. $\sigma\in P^+\mapsto \Sigma_\sigma\in\Zoll$) is a bijection, and we make $\Zoll$ into an $(n+1)$-manifold by forcing it to be a diffeomorphism.
We henceforth write $\Zoll = \{\Sigma_\sigma:\sigma\in P\}$ (resp. $\Zoll = \{\Sigma_\sigma:\sigma\in P^+\}$).

The apparent redundancy in the notation comes naturally when dealing with examples.
We present some that are relevant for our discussion, and refer the reader to \cite{GM} for other constructions.

\begin{example}
    The set of equators in $\Sph^{n+1}$.
    For every vector $v\in \Sph^{n+1}$, we define the equator $S_v:=\{x\in \Sph^{n+1}:\langle x,v \rangle = 0\}$.
    Since $S_v = S_w$ if and only if $v = \pm w$, we write $S_{[v]}$ for $[v]\in\R\Proj^{n+1}$ the line in $\R^{n+2}$ generated by $v$.
    Thus, the family of (unoriented) equators, $\{S_{\sigma}:\sigma\in\R\Proj^n\}$, is an unoriented transitive family of $n$-spheres.
\end{example}

\begin{example}
    The set of hyperplanes in $\R\Proj^{n+1}$.
    Each of its elements is the image $\Proj_\sigma = \{[x]:x\in S_\sigma\}$ of $S_\sigma$ under the projection $x\in \Sph^{n+1}\mapsto[x]\in\R\Proj^{n+1}$.
    Hence, this transitive family is $\{\Proj_\sigma:\sigma\in\R\Proj^{n+1}\}$, also parameterized by $\R\Proj^{n+1}$.
\end{example}

\begin{example}
    Think of $v\in \Sph^{n+1}$ as a unit normal vector field along the equator $S_v$.
    If we fix an orientation on $\Sph^{n+1}$, $S_v$ has an induced orientation by $v$, which is the opposite orientation of $S_{-v}$.
    We call $S_v$ an oriented equator and the set $\{S_v:v\in \Sph^{n+1}\}$ is an oriented transitive family on $\Sph^{n+1}$.
\end{example}

\begin{example}
    Let $(M,g)$ be an oriented complete $(n+1)$-dimensional Riemannian manifold of bounded geometry, and fix some number $r>0$ smaller than its injectivity radius.
    For each $p\in M$, denote by $S_r(p) := \{x\in M :d_g(x,p) = r\}$ the geodesic sphere of radius $r$ centered at $p$.
    By our restriction on $r$, the collection $\Zoll_r := \{S_r(p):p\in M\}$ is an oriented transitive family of closed embedded $n$-spheres in $M$ parametrized by $M$.
    Since every oriented manifold has a complete metric of bounded geometry (see \cite{MN15}), every such manifold has an oriented transitive family of closed embedded hypersurfaces.
    \label{example: every oriented manifold has an oriented transitive family of closed hypersurfaces}
\end{example}

\subsection{Transitive families of two-sided hypersurfaces}
The main goal of this section is to classify manifolds that admit unoriented transitive families of closed embedded hypersurfaces.
Our discussion only deals with the case $n+1\geq3$, and we make this assumption on $n$ throughout the rest of this section.
From now on, a transitive family will always be assumed unoriented unless otherwise stated.

We start by classifying the manifolds with transitive families of closed two-sided hypersurfaces.

Let $\Zoll = \{\Sigma_\sigma\}_{\sigma\in P}$ be any fixed transitive family of closed two-sided embedded hypersurfaces in a manifold $M$.
Denote the induced double fibration by $\mu:G_n(M)\to M$ and $\nu:G_n(M)\to P$.

Since each $\Sigma\in\Zoll$ is two-sided, its normal bundle $N\Sigma:= TM|_{\Sigma}/T\Sigma$ is trivial.
And since $\mu:G_n(M)\to M$ embeds the fibers $\nu^{-1}(\sigma)$ onto $\Sigma_\sigma$, the derivative $\mu_*:TG_n(M)\to \mu^*TM$ induces an isomorphism from the quotient bundle $L = TG_n(M)/(\ker(\mu_*)\oplus \ker(\nu_*))$ to the bundle $N\Zoll$ of all triples $(x,\Pi,\xi)$ for which $\xi\in N_x\Sigma_\sigma$, where $\sigma = \nu(x,\Pi)$.

This identifies a tangent vector $\zeta\in T_\sigma P$ with a section of $N\Sigma_\sigma$ as follows.
For any fixed $\sigma\in P$ and each $x\in \Sigma_\sigma$, the derivative of $\nu:G_n(M)\to P$ induces an isomorphism $T_{(x,T_x\Sigma_\sigma)}G_n(M)/\ker(\nu_*)\simeq T_\sigma P$.
Taking a further quotient $T_{(x,T_x\Sigma_\sigma)}G_n(M)/\ker(\nu_*)\to T_{(x,T_x\Sigma_\sigma)}G_n(M)/(\ker(\mu_*)\oplus\ker(\nu_*))\simeq N_x\Sigma_\sigma$, we obtain a map $T_\sigma P\to N_x\Sigma_\sigma$ that sends $\zeta$ to some $\overline{\zeta}_x$.
We then vary $x$ in $\Sigma_\sigma$ to obtain a section $\overline{\zeta}\in \Gamma(N\Sigma_\sigma)$ given by $\overline{\zeta}(x) = \overline{\zeta}_x$, and this establishes a well-defined linear map $T_\sigma P\to \Gamma(N\Sigma_\sigma)$.
We call $\overline{\zeta}$ the \textit{infinitesimal variations of $\Sigma_\sigma$ along $\Zoll$ in the direction of $\zeta$}.

For each $\zeta\in T_\sigma P$, there is a set $S_\zeta := \{x\in \Sigma_\sigma:\overline{\zeta}_x = 0\}$.
In fact, since $\overline{\zeta}_x = 0$ if and only if $\lambda\overline{\zeta}_x = 0$ for every $\lambda\neq 0$, we see that $S_\zeta = S_{\lambda\zeta} =: S_{[\zeta]}$, where $[\zeta]\in \Proj T_\sigma P$.
We call the sets $S_\xi$, $\xi\in\Proj T_\sigma P$, the \textit{nodal sets} or the \textit{nodal hypersurfaces} of $\Sigma_\sigma$, and write $S^\sigma_\xi = S_\xi$ to emphasize the dependence of $S_\xi$ on $\sigma\in P$ whenever necessary.

We claim that each $S_\xi$ is a submanifold of $\Sigma_\sigma$ of codimension one.
To show this, fix a point $x\in \Sigma_\sigma$ and a Riemannian metric $g$, and define the homomorphism
\begin{equation}
     E^g_{x,\sigma}:\zeta\in T_\sigma P \mapsto (\overline{\zeta}(x), \nabla\overline{\zeta}(x))\in N_x\Sigma_\sigma\times \left(T^*_x\Sigma_\sigma\otimes N_x\Sigma_\sigma\right),
     \label{map: the isomorphism E_x,sigma}
\end{equation}
where $\nabla\overline{\zeta} = \nabla^g\overline{\zeta}$ is the covariant derivative of $\overline{\zeta}$ with respect to our choice of metric $g$.
\begin{proposition}
    $E^g_{x,\sigma}$ is an isomorphism for any metric $g$ in $M$.
    \label{prop: Zoll dimension reduction}
\end{proposition}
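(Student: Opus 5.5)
Both sides of $E^g_{x,\sigma}$ have dimension $n+1$: $\dim T_\sigma P = n+1$, while $N_x\Sigma_\sigma$ is a line and $T^*_x\Sigma_\sigma\otimes N_x\Sigma_\sigma$ has dimension $n$. It therefore suffices to prove injectivity.

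We first reduce to a local computation near $(x,T_x\Sigma_\sigma)$. The double fibration condition $\ker\mu_*\cap\ker\nu_*=0$ says that $\nu$ restricted to the fiber $\mu^{-1}(x)\cong\R\Proj^n$ is an immersion, with image the dual hypersurface $\Sigma^*_x\subset P$. Moreover $\nu$ is a submersion, and $\dim G_n(M) = 2n+1$ while $\dim\ker\nu_* = n$. Together these give the splitting
\[ T_{(x,\Pi)}G_n(M) = \ker\mu_*\oplus\ker\nu_*\oplus L_{(x,\Pi)} \]
with $L$ a line. Under $\nu_*$, a complement of $\ker\nu_*$ maps isomorphically onto $T_\sigma P$. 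Hence every $\zeta\in T_\sigma P$ is $\nu_*$ of a vector $w = v + \ell$ with $v\in\ker\mu_*$ and $\ell$ in a fixed complement direction. The value $\overline{\zeta}_x$ is the image of $w$ in the quotient $L\simeq N_x\Sigma_\sigma$. So $\overline{\zeta}(x)=0$ exactly when $\zeta\in\nu_*(\ker\mu_*) = T_\sigma\Sigma^*_x$, an $n$-dimensional subspace.

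Next, take $\zeta\in T_\sigma\Sigma^*_x$, say $\zeta=\nu_*(v)$ with $v\in T_\Pi(\mu^{-1}(x))\cong \mathrm{Hom}(\Pi,N_x)$. Since $\overline{\zeta}(x)=0$, the derivative $\nabla\overline{\zeta}(x)$ is independent of $g$. The claim to prove is that $\nabla\overline{\zeta}(x)$ equals $v$ (up to sign) under $T^*_x\Sigma_\sigma\otimes N_x\Sigma_\sigma = \mathrm{Hom}(T_x\Sigma_\sigma,N_x)$. To see this, realize $\zeta$ by the curve $\sigma(t)=\nu(x,\Pi(t))$ with $\Pi'(0)=v$. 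The hypersurfaces $\Sigma_{\sigma(t)}$ all pass through $x$ with tangent plane $\Pi(t)$. In local coordinates adapted to $\Sigma_\sigma$ at $x$, write them as graphs $u_t$ over $\Sigma_\sigma$ with $u_0=0$. Then $u_t(x)=0$ and $du_t(x)$ encodes $\Pi(t)$ as an element of $\mathrm{Hom}(T_x\Sigma_\sigma,N_x)$. Differentiating in $t$ gives $\overline{\zeta}=\partial_t u_t|_{0}$, so $d\overline{\zeta}(x)=\partial_t du_t(x) = v$. We still need to check that this graph description of $\overline{\zeta}$ agrees with the abstract quotient definition, which means tracking the lifts through $\nu_*$ and $\mu_*$ carefully. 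Since $v\mapsto\nu_*v$ is injective on $\ker\mu_*$, the map $\zeta\mapsto\nabla\overline{\zeta}(x)$ is injective on $T_\sigma\Sigma^*_x$.

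Injectivity now follows. If $E^g_{x,\sigma}(\zeta)=0$, then $\overline{\zeta}(x)=0$ forces $\zeta\in T_\sigma\Sigma^*_x$, and then $\nabla\overline{\zeta}(x)=0$ forces $\zeta=0$. Equivalently, $E$ is block triangular with respect to the filtration $T_\sigma\Sigma^*_x\subset T_\sigma P$, and both diagonal blocks are isomorphisms. The main obstacle is the identification of $\nabla\overline{\zeta}(x)$ with the vertical vector $v$. I would handle it by working in a local trivialization from (iii) near $\mathcal{L}_{\Sigma_\sigma}$ and writing nearby leaves as graphs of sections of $N\Sigma_\sigma$ over $\Sigma_\sigma$, which is possible because the Legendrian lifts are $C^1$-close.
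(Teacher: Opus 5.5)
Your proposal is correct and is essentially the paper's argument. The paper likewise uses the dimension count and the surjectivity of $\zeta\mapsto\overline{\zeta}_x$, then tilts the tangent plane at $x$ inside $\mu^{-1}(x)$, writes the resulting $\Sigma_{\sigma(t)}$ as graphs $u_t$ over $\Sigma_\sigma$, and differentiates $du_t(x)$ in $t$. You phrase this as injectivity on $\ker(\mathrm{ev}_x)=\nu_*(\ker\mu_*)$ rather than as surjectivity onto $\{0\}\times T^*_x\Sigma_\sigma$, which is equivalent by the dimension count.

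Two small remarks. Your observation that $\nabla\overline{\zeta}(x)$ does not depend on the metric when $\overline{\zeta}(x)=0$ makes $E^g_{x,\sigma}$ block triangular for every $g$; this justifies the paper's unproved reduction to a single product metric. The identification $\overline{\zeta}=\partial_t u_t|_{t=0}$ that you flag is immediate. For $y\in\Sigma_\sigma$, set $y_t=\exp_y(u_t(y)N_y)$. The curve $t\mapsto(y_t,T_{y_t}\Sigma_{\sigma(t)})$ lies in $\mathcal{L}_{\Sigma_{\sigma(t)}}$, so its velocity $w$ satisfies $\nu_*w=\zeta$ and $\mu_*w=\partial_t u_t(y)N_y$.
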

\begin{proof}
    We only need to prove that $E^g_{x,\sigma}$ is an isomorphism for some $g$ to conclude it is an isomorphism for every metric on $M$.
    
    Take $g$ so that $\Sigma_\sigma$ is a totally geodesic hypersurface.
    We may even take $g$ to be the product metric $g = g_\sigma + dt^2$ in some tubular neighborhood $U\approx \Sigma_\sigma\times (-1,1)$ of $\Sigma_\sigma$, where $g_\sigma$ is some metric in $\Sigma_\sigma$.
    We then identify $N\Sigma_\sigma = T^\perp\Sigma_\sigma$ and choose a unit normal $N$ along $\Sigma_\sigma$.
    In particular, the associated variational field $\overline{\zeta}$ of a vector $\zeta\in T_\sigma P$ is decomposed as $\overline{\zeta} = \phi_\zeta\cdot N$, where $\phi_\zeta \in C^\infty(\Sigma_\sigma)$.

    Note that, to prove the proposition for $g$, it suffices to show that the homomorphism
    \begin{equation}
         E^g_{x,\sigma}:\zeta\in T_\sigma P \mapsto \left( \phi_\zeta(x),d\phi_\zeta(x) \right) \in \R\times T^*_x\Sigma_\sigma.
         \label{map: the isomorphism E^g_x,sigma}
    \end{equation}
    is an isomorphism.

    Observe that both domain and co-domain of $E^g_{x,\sigma}$ have the same dimension, and that the composition of $E^g_{x,\sigma}$ with the projection $\R\times T_x\Sigma_\sigma\to \R$ is onto, because the homomorphism $\zeta\in T_\sigma P\mapsto \overline{\zeta}_x\in T^\perp_x\Sigma_\sigma$ is surjective.
    Therefore, we only need to show that, for every vector $v\in T_x\Sigma_\sigma$, there is some $\zeta\in T_\sigma P$ whose corresponding function $\phi_\zeta$ vanishes at $x$ and has gradient $\nabla\phi_\zeta(x) = v$.
    Here, we write $\nabla\phi_\zeta = \nabla^{\Sigma_\sigma}\phi_\zeta$ for the gradient in the induced metric by $g$.
    
    For now, fix a non-zero vector $v\in T_x\Sigma_\sigma$, and write $V = \{w\in T_x\Sigma_\sigma:g(v,w)=0\}$ for the orthogonal complement.
    For all $t\in\R$ with $|t|$ sufficiently small, the hypersurface $\Sigma_t:= \Sigma_{\sigma(t)} \in\Zoll$ that passes through $x$ with $T_x\Sigma_t = V\oplus\langle v\cos t + N_x\sin t \rangle$ lies in the tubular neighborhood $U_\epsilon = \{\exp_x(sN_x):x\in \Sigma_\sigma, |s|<\epsilon\} \approx \Sigma_\sigma\times (-\epsilon,\epsilon)$ and can be written as the graph of a function $u_t\in C^\infty(\Sigma_\sigma)$ that depends smoothly on $t$.
    Explicitly, we write $\Sigma_t = \{\exp_x(u_t(x)N_x):x\in \Sigma_\sigma\}$.
    This implies that the hyperplane $T_x\Sigma_t\subset T_xM$ is the set of vectors of the form $w+(\nabla_wu_t)N_x$ for $w\in T_x\Sigma_\sigma$.
    By the equality $T_x\Sigma_t = V\oplus\langle v\cos t + N_x\sin t \rangle$, we conclude that $\nabla_wu_t = 0$ for every $w\in V$ and $v+(\nabla_v u_t)N_x = \lambda (v\cos t + N_x\sin t)$ for some $\lambda\neq 0$.
    The only possible $\lambda$ is $(\cos t)^{-1}$, which implies $\nabla_v u_t = \tan t$.
    
    By construction, $\sigma = \sigma(0)$, and the choice of $\zeta = \sigma'(0)\in T_\sigma P$ has an associated function equal to
    \[ \phi_\zeta = \frac{d}{dt}\bigg|_{t=0} u_t. \]
    By smoothness, we see that
    \[ \nabla_w\phi_\zeta = \frac{d}{dt}\bigg|_{t=0} \nabla_w u_t = 0 \]
    for all $w\in V$, and
    \[ \nabla_v\phi_\zeta = \frac{d}{dt}\bigg|_{t=0} \nabla_v u_t = \frac{d}{dt}\bigg|_{t=0}\tan t = 1. \]
    This proves that $\nabla\phi_\zeta(x) = v$ as desired.
\end{proof}

As an abuse of notation, we write $E_{x,\sigma}$ for the maps described in \eqref{map: the isomorphism E_x,sigma} and \eqref{map: the isomorphism E^g_x,sigma}, and also for the map $\zeta\mapsto(\phi_\zeta(x),\nabla\phi_\zeta(x))$, where $\nabla \phi_\zeta$ is the $g$-gradient of $\phi_\zeta$ and $\phi_\zeta = \phi^g_\zeta$ is defined as in the proof of Proposition \ref{prop: Zoll dimension reduction}

As a consequence of Proposition \ref{prop: Zoll dimension reduction}, we have proved that, for every $\sigma\in P$ and $x\in\Sigma_\sigma$, and for every $(n-1)$-dimensional subspace $V\subset T_x\Sigma_\sigma$ there is a unique $[\zeta]\in \Proj T_\sigma P$ such that $T_x S_{[\zeta]} = V$.
Each $S_{[\zeta]}$ is a smooth closed, but possibly disconnected hypersurface of $\Sigma_\sigma$.
Furthermore, since the correspondence $\zeta\mapsto\overline{\zeta}$ is smooth, so is $[\zeta]\mapsto S_{[\zeta]}$.

We can say more about the nodal sets of $\Sigma_\sigma$.
In fact, we are able to determine the topology of both $\Sigma_\sigma$ and its nodal sets up to diffeomorphism.

Once again, fix a Riemannian metric $g$ on $M$ and $\sigma\in P$.
Choose a basis $\zeta_0,...,\zeta_{n}\in T_\sigma P$ and write $\phi_i:= \phi_{\zeta_i}$ for $i=0,...,n$, where we use the notation $\zeta\mapsto\phi_\zeta$ from the proof of Proposition \ref{prop: Zoll dimension reduction}.
We define the map $\widetilde{\psi}_\sigma:\Sigma_\sigma\to \R^{n+1}$ that assigns $x\in \Sigma_\sigma$ to $\widetilde{\psi}_\sigma(x) = (\phi_0(x),...,\phi_n(x))$.

If $\widetilde{\psi}_\sigma(x) = 0$ for some point $x\in \Sigma_\sigma$, then the corresponding linear map $E_{x,\sigma}:T_\sigma P\to \R\times T_x\Sigma_\sigma$ would have a nontrivial kernel, and would not be an isomorphism.
Since this cannot happen, $\widetilde{\psi}_\sigma$ maps $\Sigma_\sigma$ in $\R^{n+1}\setminus\{0\}$.
Therefore, we can define $\psi_\sigma:x\in\Sigma_\sigma\mapsto\widetilde{\psi}_\sigma(x)/|\widetilde{\psi}_\sigma(x)|\in \Sph^n$.

\begin{proposition}
    For any choice of basis $\zeta_0,...,\zeta_n$ of $T_\sigma P$, $\psi_\sigma$ is a diffeomorphism that identifies the family $\{S^\sigma_\xi:\xi\in \Proj T_\sigma P\}$ with the set of equators of $\Sph^n$.
    \label{prop: Sigma is diffeomorphic to S^n through psi_sigma}
\end{proposition}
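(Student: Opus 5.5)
Throughout, $n+1\geq 3$, so $n\geq 2$ and $\Sph^n$ is simply connected. The plan is to show that $\psi_\sigma$ is a local diffeomorphism, then a covering map, then a diffeomorphism.

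\textbf{Step 1: $\psi_\sigma$ is an immersion.} The key input is Proposition~\ref{prop: Zoll dimension reduction}. Fix $x\in\Sigma_\sigma$ and set $y=\widetilde{\psi}_\sigma(x)\neq 0$. The derivative of $\psi_\sigma$ at $x$ is the composition of $d\widetilde{\psi}_\sigma(x)$ with the orthogonal projection of $\R^{n+1}$ onto $y^\perp$, followed by scaling by $|y|^{-1}$. So we must show: if $d\widetilde{\psi}_\sigma(x)v=\lambda y$ for some $\lambda\in\R$, then $v=0$.

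Under the basis $\zeta_0,\dots,\zeta_n$, identify $T_\sigma P$ with $\R^{n+1}$. Consider the dual covectors $a\in(\R^{n+1})^*$, which correspond to the linear functionals $\zeta\mapsto$ (coefficients). By Proposition~\ref{prop: Zoll dimension reduction}, the map
\[ \zeta=\textstyle\sum c_i\zeta_i\;\mapsto\;\bigl(\sum c_i\phi_i(x),\,\sum c_i d\phi_i(x)\bigr) \]
is an isomorphism. Equivalently, its transpose
\[ (s,v)\in\R\times T_x\Sigma_\sigma\;\mapsto\;s\,\widetilde{\psi}_\sigma(x)+d\widetilde{\psi}_\sigma(x)v\in\R^{n+1} \]
is an isomorphism. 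Therefore $d\widetilde{\psi}_\sigma(x)v-\lambda y=0$ forces $v=0$. Hence $d\psi_\sigma(x)$ is injective, and since $\dim\Sigma_\sigma=n=\dim\Sph^n$, it is an isomorphism.

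\textbf{Step 2: $\psi_\sigma$ is a diffeomorphism.} Since $\Sigma_\sigma$ is closed, $\psi_\sigma$ is a proper local diffeomorphism, hence a covering map onto $\Sph^n$. Because $\Sph^n$ is connected, the map is surjective, and because it is simply connected (this is where $n\geq 2$ is used) and $\Sigma_\sigma$ is connected, the covering is trivial. Thus $\psi_\sigma$ is a diffeomorphism.

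\textbf{Step 3: nodal sets correspond to equators.} For $\zeta=\sum c_i\zeta_i$ we have $\phi_\zeta=\sum c_i\phi_i=\langle c,\widetilde{\psi}_\sigma\rangle$, so
\[ S_{[\zeta]}=\{x:\langle c,\psi_\sigma(x)\rangle=0\}=\psi_\sigma^{-1}\bigl(S_{[c]}\bigr), \]
the preimage of the equator orthogonal to $c$. The correspondence $[\zeta]\mapsto[c]$ is a bijection $\Proj T_\sigma P\to\R\Proj^n$, so the nodal family is identified with the family of equators. Note that $\phi_\zeta$ depends on the chosen metric $g$ only up to a positive factor, so the nodal sets, and the statement, are independent of that choice.

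\textbf{Main obstacle.} The only delicate point is Step 1: the transpose/duality reformulation of Proposition~\ref{prop: Zoll dimension reduction} must be written carefully. After that, the argument reduces to the standard covering-space fact in Step 2.
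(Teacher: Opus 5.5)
Your proposal is correct and follows essentially the same route as the paper: immersion via Proposition~\ref{prop: Zoll dimension reduction}, then a covering over the simply connected $\Sph^n$, then identifying nodal sets with equators by linearity of $\zeta\mapsto\phi_\zeta$. Your transpose formulation in Step~1 is the same argument as the paper's determinant \eqref{eq: nonzero determinant}. The paper evaluates that determinant in a basis adapted to $x$ and then invokes a change of basis, while you avoid the special basis by observing that $(s,v)\mapsto s\,\widetilde{\psi}_\sigma(x)+d\widetilde{\psi}_\sigma(x)v$ is the transpose of $E_{x,\sigma}$.
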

\begin{proof}
    Observe that the differential of $\widetilde{\psi}_\sigma$ is
    \[ D\widetilde{\psi}_\sigma(x)\cdot v = \left( \langle \nabla\phi_0(x),v \rangle, ..., \langle \nabla\phi_n(x),v\rangle\right) \]
    for any $x\in\Sigma_\sigma$ and $v\in T_x\Sigma_\sigma$.
    If there was a point $x\in\Sigma_\sigma$ and a nonzero vector $v\in T_x\Sigma_\sigma$ for which $D\widetilde{\psi}_\sigma(x)\cdot v = 0$, then $\langle\nabla\phi_i(x),v\rangle = 0$ for all $i=0,...,n$.
    Since $\phi_i = \phi_{\zeta_i}$ and $\zeta_0,...,\zeta_n$ is a basis for $T_\sigma P$, the map $E_{x,\sigma}$ would have image contained in the subspace $\R\times \langle v\rangle^\perp$, where $\langle v\rangle^\perp = \{w\in T_x\Sigma_\sigma: g(v,w) = 0\}$ is the orthogonal complement relative to $g$.
    Hence, $E_{x,\sigma}$ would not be an isomorphism | a contradiction.

    Therefore, the map $\widetilde{\psi}_\sigma$ is an immersion.
    We now claim that $\psi_\sigma$ is as well.
    It suffices to prove that, for every $x\in\Sigma_\sigma$, the image of $D\widetilde{\psi}_\sigma(x)$ and the kernel of the derivative, at $\widetilde{\psi}_\sigma(x)$, of the projection $p\in \R^{n+1}\setminus\{0\}\mapsto p/|p|\in \Sph^n$ have a trivial intersection.

    Indeed, for a fixed $x\in\Sigma_\sigma$ and any choice of orthonormal basis $e_1,...,e_n\in T_x\Sigma_\sigma$, we can take $\zeta_0,...,\zeta_n$ so that $\phi_0(x) = 1$, $\nabla\phi_0(x) = 0$, $\phi_i(x) = 0$ and $\nabla\phi_i(x) = e_i$ for $i=1,...,n$.
    This is because $E_{x,\sigma}$ is an isomorphism.
    In particular, the determinant
    \begin{equation}
        \begin{vmatrix}
            \phi_0(x) & \nabla_{e_1}\phi_0(x) & \cdots & \nabla_{e_n}\phi_0(x) \\
            \vdots & \vdots & \ddots & \vdots \\
            \phi_n(x) & \nabla_{e_1}\phi_n(x) & \cdots & \nabla_{e_n}\phi_n(x) \\
        \end{vmatrix}
        = 1 \neq 0,
        \label{eq: nonzero determinant}
    \end{equation}
    where we denote by $\nabla_vf = df(v) = \langle\nabla f,v \rangle$.
    This is precisely what we wanted, because the first column of the above matrix  is the kernel of the derivative, at $\widetilde{\psi}_\sigma(x)$, of the projection $\R^{n+1}\setminus\{0\}\to \Sph^n$, while the other columns form a basis for the image of $D\widetilde{\psi}_\sigma(x)$.

    What we proved was that $\widetilde{\psi}_\sigma$ is an immersion at a point $x$ for a specific choice of basis of $T_\sigma P$ that depends on $x$.
    A change in bases leads to the taking of a conjugate of the matrix whose determinant appears in \eqref{eq: nonzero determinant}.
    This still implies that the resulting determinant is nonzero, and therefore $\psi_\sigma$ is an immersion at all points $x\in \Sigma_\sigma$ and for any choice of basis of $T_\sigma P$.

    Since $\psi_\sigma:\Sigma_\sigma\to \Sph^n$ is an immersion from a closed $n$-dimensional manifold to the $n$-sphere, $\psi_\sigma$ is a covering map.
    It is a diffeomorphism, because $\Sph^n$ is simply connected.

    To finish the proof, we fix a choice of basis $\zeta_0,...,\zeta_n\in T_\sigma P$ and consider the inner product that makes this into an orthonormal basis.
    For any unit vector $\zeta = u_0\zeta_0+\cdots + u_n\zeta_n\in T_\sigma P$, where $u = (u_0,...,u_n)\in \Sph^n$, the map $\psi_\sigma$ sends $S_{[\zeta]} = \{ \phi_\zeta = 0 \}\subset \Sigma_\sigma$ to the equator $S_{[u]} = \{ v\in \Sph^n:\langle u,v\rangle = 0 \}\subset \Sph^n$.
    This happens because the linearity of $\eta\mapsto\phi_\eta$ allows us to write $\phi_\zeta = u_0\phi_0 + \cdots + u_n\phi_n$, and the points in the zero locus of $\phi_\zeta$ are those where $\langle u,\psi_\sigma\rangle = \langle u,\widetilde{\psi}_\sigma\rangle = u_0\phi_0+\cdots\cdots u_n\phi_n = 0$.
\end{proof}

\begin{corollary}
    The collection $\{S^\sigma_\xi:\xi\in\Proj T_\sigma P\}$ is an unoriented transitive family of $(n-1)$-spheres in $\Sigma_\sigma$ for each $\sigma\in P$.
\end{corollary}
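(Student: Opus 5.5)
The plan is to transport everything through the diffeomorphism $\psi_\sigma:\Sigma_\sigma\to\Sph^n$ of Proposition \ref{prop: Sigma is diffeomorphic to S^n through psi_sigma}. Once a basis $\zeta_0,\dots,\zeta_n$ of $T_\sigma P$ is fixed, that proposition identifies each nodal set $S^\sigma_{[\zeta]}$ with the equator $S_{[u]}\subset\Sph^n$, where $\zeta=\sum u_i\zeta_i$. The correspondence $[\zeta]\mapsto[u]$ is the linear isomorphism $\Proj T_\sigma P\cong\R\Proj^n$ induced by the basis. Hence $\{S^\sigma_\xi:\xi\in\Proj T_\sigma P\}$ is the image under $\psi_\sigma^{-1}$ of the family of all equators $\{S_{[u]}:[u]\in\R\Proj^n\}$ of $\Sph^n$.

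The key steps are as follows.

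First, I recall from the first example of the paper that the equators of $\Sph^n$ form an unoriented transitive family of closed embedded $(n-1)$-spheres. Properties (i)--(iii) of Definition \ref{def: transitive families} are checked directly. The Legendrian lift of $S_{[u]}$ is embedded, since $S_{[u]}$ is embedded. Through each $(y,\Pi)\in G_{n-1}(\Sph^n)$ passes exactly one equator, namely the one spanned by $y$ and $\Pi$ in $\R^{n+1}$. This gives the smooth submersion $G_{n-1}(\Sph^n)\to\R\Proj^n$, $(y,\Pi)\mapsto(\R y\oplus\Pi)^\perp$. It is $O(n+1)$-equivariant, and $O(n+1)$ acts transitively on $\R\Proj^n$, so it is a locally trivial fiber bundle whose fibers are exactly the Legendrian lifts. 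This gives (ii) and (iii).

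Second, I observe that the three properties are invariant under diffeomorphisms. A diffeomorphism $F:N\to N'$ induces a diffeomorphism $G_{n-1}(N)\to G_{n-1}(N')$, $(y,\Pi)\mapsto(F(y),dF_y\Pi)$. This map sends the Legendrian lift of $S$ to the Legendrian lift of $F(S)$. Therefore it carries embeddedness of lifts, the foliation property, and local triviality from one side to the other. Applying this to $F=\psi_\sigma^{-1}$, the family $\{\psi_\sigma^{-1}(S_{[u]})\}=\{S^\sigma_\xi\}$ is an unoriented transitive family of $(n-1)$-spheres in $\Sigma_\sigma$. Its parameter space is identified with $\Proj T_\sigma P$.

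The main point needing care is that $\psi_\sigma^{-1}$ maps the equator family \emph{onto} the nodal family, with the parametrizations matching bijectively. Surjectivity holds because every $[u]\in\R\Proj^n$ arises from $\zeta=\sum u_i\zeta_i$. Injectivity holds because distinct $[u]$ give distinct equators. A secondary check is that the family does not depend on the choice of basis, but this is clear since the nodal sets are defined intrinsically. Finally, the standing assumption $n+1\geq3$ ensures that $n-1\geq1$, so the $(n-1)$-spheres are genuine closed hypersurfaces of $\Sigma_\sigma$, which has dimension at least $2$.
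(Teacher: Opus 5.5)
Your argument is correct and matches the paper's approach. The paper states this corollary without a separate proof: it follows from the last paragraph of Proposition \ref{prop: Sigma is diffeomorphic to S^n through psi_sigma}, where $\psi_\sigma$ carries each $S^\sigma_{[\zeta]}$ onto the equator $S_{[u]}$, together with the fact that equators form an unoriented transitive family; you make that implicit argument explicit by checking (ii)--(iii) for equators via the $O(n+1)$-equivariant map $(y,\Pi)\mapsto(\R y\oplus\Pi)^\perp$ and noting invariance under diffeomorphisms.
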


\begin{remark}
    We used the lower bound $n+1 \geq 3$ to ensure that $\Sph^n$ is simply connected and thus concluded that $\psi_\sigma$ is a diffeomorphism.
    If we consider the case $n+1=2$, Proposition \ref{prop: Zoll dimension reduction} still holds and we can follow the arguments of Proposition \ref{prop: Sigma is diffeomorphic to S^n through psi_sigma} up to the conclusion that $\psi_\sigma$ is a covering map.
    It is possible to prove Proposition \ref{prop: Sigma is diffeomorphic to S^n through psi_sigma} for $n+1=2$, and this follows from the arguments in \cite[Section 2]{LM}).

    To properly use the framework in \cite{LM}, we need to show that an unoriented transitive family of simple closed curves $\Zoll$ in a surface $M^2$ is the set of geodesics of some Zoll projective structure as in \cite[Definition 2.4]{LM}.
    We do this in the following way.
    Choose a metric $g$ on $M$, and construct a connection $\nabla$ (not necessarily equal to the Riemannian connection $\nabla^g$) so that, if $\ell\in \Zoll$, and $\gamma:\R\to \ell$ and $N:\R\to \gamma^*TM$ are, respectively, a parametrization of $\ell$ and a normal vector field along $\gamma$ such that $|\gamma'| = |N| = 1$, then $\nabla_{d/dt} \gamma' = \nabla_{d/dt} N = 0$.
    The pair $(\Zoll,g)$ uniquely defines the connection $\nabla$ in this way, and the geodesics of $\nabla$ are the curves $\ell\in\Zoll$.
    Therefore, $[\nabla]$ is a Zoll projective structure.

    We also remark that Theorems A and B hold for $n+1=2$ by \cite[Section 2]{LM}, but the proof of Proposition \ref{prop: Sigma is diffeomorphic to S^n through psi_sigma} follows from the fact that any surface equipped with a Zoll projective structure is either a sphere or a projective plane.
    \hfill $\lozenge$
    \label{rmk: the case n+1=2}
\end{remark}

We are now ready to prove Theorem A under the assumption that the elements of the transitive family are two-sided.

\begin{theorem}
    Let $M$ be an $(n+1)$-dimensional manifold equipped with an unoriented transitive family $\Zoll = \{\Sigma_\sigma:\sigma\in P\}$ of closed embedded hypersurfaces.
    If the elements of $\Zoll$ are two-sided, then $M$ is diffeomorphic to $\Sph^{n+1}$ and the hypersurfaces in $\Zoll$ are diffeomorphic to $\Sph^n$.
    Moreover, if we identify the points $p\in\Sph^n$ with $(p,0)\in\Sph^n\times\{0\}\subset \Sph^{n+1}$, the map $\psi_\sigma:\Sigma_\sigma\to\Sph^n$ in Proposition \ref{prop: Sigma is diffeomorphic to S^n through psi_sigma} extends to a diffeomorphism from $M$ to $\Sph^{n+1}$.
    \label{thm: classification of transitive families of two-sided hypersurfaces}
\end{theorem}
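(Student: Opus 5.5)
The plan is to build $M$ as a ``join'' of an $(n-1)$-sphere with a circle, mimicking the way $\Sph^{n+1}$ is swept out by the pencil of equators through a fixed great $(n-1)$-sphere. Fix $\sigma\in P$, a point $x\in\Sigma_\sigma$ and an $(n-1)$-plane $V\subset T_x\Sigma_\sigma$. The hyperplanes of $T_xM$ containing $V$ form a circle $\R\Proj^1$. Let $\Sigma_t:=\Sigma(x,\Pi_t)$, $t\in\R\Proj^1$, be the corresponding members of $\Zoll$, with $\Sigma_0=\Sigma_\sigma$. By Proposition \ref{prop: Zoll dimension reduction}, the variation $\dot\sigma(t)$ of this pencil at each $t$ is the unique direction $[\zeta]$ whose nodal sphere $S^{\sigma(t)}_{[\zeta]}$ passes through $x$ with tangent space $V$. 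By Proposition \ref{prop: Sigma is diffeomorphic to S^n through psi_sigma}, this nodal set is an embedded $(n-1)$-sphere $S_t\subset\Sigma_t$, corresponding under $\psi_{\sigma(t)}$ to an equator.

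\textbf{Step 1: common axis.} I will show that $S_t=S_0=:S$ for all $t$, and that $\Sigma_t\cap\Sigma_s=S$ whenever $s\neq t$. The first claim should come from a uniqueness argument. At any $y\in S_t$, the hypersurface $\Sigma_t$ has vanishing normal velocity, so $\Sigma_{t+h}$ passes through $y$ to first order. Through each $y\in S_0$ I then consider the pencil generated by the hyperplanes containing $T_yS_0$, and compare it with the original pencil. Using the isomorphism $E_{y,\sigma}$, both pencils are integral curves in $P$ of the same line field $\sigma\mapsto[\zeta]$ with $S^\sigma_{[\zeta]}\ni y$, which is smooth by smoothness of $[\zeta]\mapsto S_{[\zeta]}$. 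ODE uniqueness then gives that every $\Sigma_t$ contains $y$ and is tangent there to $T_yS_0$. The second claim follows from the first together with the fact that the normal velocity $\phi_{\dot\sigma(t)}$ vanishes exactly on $S$, so it has a sign on each of the two components of $\Sigma_t\setminus S$ (the two open hemispheres under $\psi_{\sigma(t)}$). Two-sidedness of $\Sigma_t$ is used exactly here, so that $\phi$ is a genuine function rather than a section of a twisted bundle.

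\textbf{Step 2: the sweepout.} Define $F:S\times D\to M$, where $D$ is the closed disk realised as the cone over $\R\Proj^1$ lifted to a circle of half-turns. A point at radius $r$ and angle $\theta\in[0,2\pi)$ is sent into the open hemisphere $H_\theta$ of $\Sigma_{[\theta]}$, where $\theta$ and $\theta+\pi$ give the two hemispheres of the same $\Sigma_{[\theta]}$; the point of $S$ itself is used when $r=0$. Concretely, parametrize $H_\theta$ via $\psi_{\sigma(\theta)}^{-1}$ as a disk with boundary $S$, so that $F$ restricts to $\psi_\sigma^{-1}$ (suitably extended) on $\Sigma_0$. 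The sign property of Step 1 shows that $F$ is injective and a local diffeomorphism away from $S$. Near $S$, the double-fibration condition $\ker\mu_*\cap\ker\nu_*=0$ says the pencil rotates $T_y\Sigma_t$ nondegenerately about $T_yS$, which gives the local diffeomorphism property at $S$. Since $S\times D$ modulo the obvious collapse is the join $\Sph^{n-1}*\Sph^1\approx\Sph^{n+1}$, $F$ descends to an injective local diffeomorphism $\Sph^{n+1}\to M$. Its image is compact, hence closed, and open, so by connectedness of $M$ it is surjective. This is a diffeomorphism that extends $\psi_\sigma^{-1}$, once the join coordinates are chosen so that $\Sigma_0$ goes to $\Sph^n\times\{0\}$. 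Inverting it gives the required extension of $\psi_\sigma$.

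\textbf{Main obstacle.} I expect the main obstacle to be Step 1, namely proving rigorously that the whole pencil shares the same nodal sphere and that distinct members meet only along it. The infinitesimal data only gives first-order tangency, so the ODE/uniqueness argument in $P$ must be set up carefully. It also has to be done globally in $t\in\R\Proj^1$, which needs the closed curve in $P$ to exist, and this is guaranteed by the local triviality in Definition \ref{def: transitive families}(iii). If that approach stalls, I would instead try to show directly that the set $\{t:\ S\subset\Sigma_t\}$ is open and closed in $\R\Proj^1$. Openness would come from the transversality given by $E_{y,\sigma}$, and closedness from smooth dependence of $\Sigma(x,\Pi)$ on $(x,\Pi)$.
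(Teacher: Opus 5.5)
There is a genuine gap in Step 1, and everything else depends on it. The claim that all members of the pencil $\Sigma(x,\Pi_t)$, $\Pi_t\supset V$, have the same nodal sphere $S$ and meet pairwise only along $S$ is false for general transitive families. Your ODE argument cannot prove it, because there is no line field on $P$ of which both pencils are integral curves. For fixed $y$, the condition $y\in S^\sigma_{[\zeta]}$ only says $\phi_\zeta(y)=0$. That cuts out a hyperplane of $T_\sigma P$ (the tangent space of $\Sigma^*_y=\{\sigma:y\in\Sigma_\sigma\}$), not a line. A line appears only once you also prescribe the tangent plane $W$ of the nodal set at $y$, and then the set of $\sigma$ carrying that datum is just the pencil at $(y,W)$ itself. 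So $E_{y,\sigma}$ only tells you that the pencils at $(x,V)$ and at $(y,T_yS_0)$ are tangent at $\sigma$. Nothing forces agreement beyond first order.

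Your argument never uses $n\geq 2$, and for $n+1=2$ its conclusion is false. Take the geodesics of a Zoll metric on $\Sph^2$ with period $2\pi$. The argument would say that all geodesics through $x$ pass through a common point $q$. Differentiating $\gamma_\theta(t(\theta))=q$ shows the hitting time is constant in $\theta$, hence equal to $\pi$ because $t(-v)=2\pi-t(v)$. That is the Wiedersehen property, which Green's theorem on Wiedersehen surfaces excludes for non-round Zoll metrics.

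The same failure occurs in higher dimensions, for instance for generic $C^\infty$-small perturbations $\{\exp_a(\epsilon f(v,a)v):a\in S_v\}$ of the equators with $f$ odd in $v$. These are still unoriented transitive families. To first order in $\epsilon$, the pencil over a great circle $C$ with axis $A$ keeps a common axis only if $f|_{C\times A}$ splits as a function linear in $a$ plus a function linear in $v$. Your fallback fails for the same reason: transversality tells you that $\Sigma_t\cap\Sigma$ is an $(n-1)$-sphere near $S$, not that it equals $S$.

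Without Step 1, the construction in Step 2 is not even defined. The nodal spheres $S_t\subset\Sigma_t$ differ, and two members $\Sigma_\theta,\Sigma_{\theta'}$ of the pencil meet along an $(n-1)$-sphere that moves with the pair. Points of $\Sigma_\theta\cap\Sigma_{\theta'}$ off the axes would therefore be covered twice, and there is no open book of $M$ built from members of $\Zoll$.

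The idea you are missing is that no global structure of this kind is needed. The paper compares $\Sigma$ with one member $\Sigma_t$ of the rotating pencil at a time. These always meet transversally, since a tangency would force $\Sigma_t=\Sigma$, along a sphere $S_t$ that is allowed to move. Isotopy extension carries the side $U_0$ of $\Sigma$ to the other side $U_1$ through the cornered regions $\Omega_t=\overline{U_0}\cap\overline{U_t}$. After smoothing corners, $\Omega$ is diffeomorphic to $\widetilde\Omega_{1-\epsilon}$, which is a ball because for $t$ near $1$ it is the region under a graph over a hemisphere of $\Sigma$. The gluing map is isotopic to the reflection $j$ because $\psi_1=j\circ\psi_0$. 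To extend $\psi_\sigma$, the paper even has to apply an ambient isotopy to arrange $\Sigma_t\cap\Sigma=S$ for all $t$, which is exactly the property you tried to get for free.
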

\begin{proof}
    The proof is divided in three parts.
    In Part I, we prove that $M$ is diffeomorphic to $\Sph^{n+1}$ under the assumption that each hypersurface $\Sigma\in\Zoll$ separates $M$ in two components; and in Part II, we prove that if the elements of $\Zoll$ are two-sided, they are separating.
    Together, Parts I and II imply the first assertion of the theorem.

    Part III proves the last assertion about $\psi_\sigma$.
    This is a refinement of Part I, and will only be used when we deal with unoriented transitive families of closed one-sided embedded hypersurfaces.
    We mention that, a priori, $\psi_\sigma$ could have no extension to a diffeomorphism from $M$ to $\Sph^{n+1}$, even though $M$ is diffeomorphic to $\Sph^{n+1}$.
    This is related to the fact that there exist diffeomorphisms of $\Sph^n$ that do not extend to diffeomorphisms of the closed $(n+1)$-ball (see \cite{Mil} and also \cite{Wall}).

    Throughout the proof, we take some Riemannian metric $g$ on $M$.
    Whenever we fix a unit normal vector field $N$ along a hypersurface $\Sigma_\sigma\in\Zoll$, we write $\overline{\zeta} = \phi_\zeta\cdot N$ for $\zeta\in T_\sigma P$ as in the proof of Proposition \ref{prop: Zoll dimension reduction}.
    As an abuse of notation, we also call the functions $\phi_\zeta\in C^\infty(\Sigma_\sigma)$, $\zeta\in T_\sigma P$, the \textit{infinitesimal variations of $\Sigma_\sigma$ along $\Zoll$}.
    By Proposition \ref{prop: Zoll dimension reduction}, a function $\phi\in\{\phi_\zeta: \zeta\in T_\sigma P\}$ is uniquely determined by its value $\phi(x)$ and its gradient $\nabla\phi(x)$ at any point $x\in\Sigma_\sigma$.

    \begin{namedtheorem}[Part I]
        $M$ is diffeomorphic to $\Sph^{n+1}$ if each $\Sigma_\sigma$ is separating.
    \end{namedtheorem}
    \textit{Step 1.} $\Sigma_\sigma$ separates $M$ in two $(n+1)$-balls.
    
    Fix $\Sigma\in\Zoll$.
    By our assumption, $\Sigma$ divides $M$ in two components, say $U_0$ and $U_1$, and we let $\Omega = \overline{U_0}$.
    Observe that, a priori, $\Omega$ may not be compact, since we do not assume compactness of $M$.
    We claim that $\Omega$ is diffeomorphic to the closed unit ball $\bar{B}^{n+1}\subset\R^{n+1}$.
    Suppose that this is true.
    Then we may replace $U_0$ with $U_1$ to conclude that $M$ is obtained from gluing two $(n+1)$-dimensional balls along its boundary spheres through a diffeomorphism.
    In particular, this already implies that $M$ is homeomorphic to $\Sph^{n+1}$.

    Now we turn to the proof of our claim.
    Fix $p\in \Sigma$ and let $S\subset \Sigma$ be a nodal set that passes through $p$.
    Let $v\in T_p\Sigma$ and $N$ be, respectively, a choice of unit vector perpendicular to $T_pS$ and a unit normal vector field along $\Sigma$ that points towards $\Omega$.
    For $t\in [0,1]$, we let $\Sigma_t\in\Zoll$ be the unique hypersurface that passes through $p$ with $T_p\Sigma_t$ equal to $T_pS\oplus \langle v_t\rangle$, where $v_t = \cos(\pi t) v + \sin(\pi t)N_p$.
    Observe that both $\Sigma_0 = \Sigma_1 = \Sigma$.

    By the properties of $\Zoll$, $\Sigma_t$ intersects $\Sigma = \Sigma_0$ transversely for $t\in(0,1)$.
    In particular, $S_t := \Sigma_t\cap \Sigma$ is a smooth submanifold of $M$ of codimension two, and $\Sigma_t^+:= \Sigma_t\cap\Omega$ is a smooth compact hypersurface with boundary equal to $S_t$.
    Moreover, since $\Sigma_t$ varies smoothly in $t$ in the $C^\infty$ topology, the same is true for $S_t$ and for $\Sigma_t^+$.

    We also observe two things.
    First, each $S_t$ is diffeomorphic to $\Sph^{n-1}$, since $S_t$ converges in the $C^\infty$ topology to the nodal set $S$ as $t\to 0$ (and as $t\to 1$) and, from Proposition \ref{prop: Sigma is diffeomorphic to S^n through psi_sigma}, there is a diffeomorphism from $\Sigma$ to $\Sph^n$ that takes $S$ to an equator.
    This justifies the notation $S_0 := S_1 := S$.
    Second, $\Sigma\cup\Sigma_t$ divides $M$ in exactly four components whenever $t\in(0,1)$.

    Since the application $t\in[0,1]\mapsto \Sigma_t\in\Zoll$ is smooth in the $C^\infty$ topology, we can write $\Sigma_t$ as a normal graph over $\Sigma_s$ whenever $t$ and $s$ are sufficiently close.
    Therefore, there exists an isotopy $f:\Sigma\times[0,1]\to M$ such that $\Sigma_t = f(\Sigma,t)$, $f(x,0) = x$ for every $x\in\Sigma$ and $f(p,t) = p$ for every $t\in [0,1]$.
    By the isotopy extension theorem, we can extend $f$ to an isotopy $F:M\times [0,1]\to M$ starting from the identity map $F(x,0) = x$ for all $x\in M$.
    
    Here, the isotopy extension theorem can be applied, because $\Sigma$ is compact.
    We also remark that the extension $F$ is constructed to have compact support, in the sense that $F(x,t) = x$ for all pairs $(x,t)\in M\times[0,1]$ outside a compact subset (see \cite[Theorem 2.4.2]{Wall}).

    Our choice of normal $N$ along $\Sigma$ can be smoothly deformed to give a time-dependent normal vector field $N^t$ along each $\Sigma_t$.
    By construction, $N^t_p = \cos(\pi t)N_p -\sin(\pi t)v$ and, hence, $N^0 = N = -N^1$.
    Thus, the isotopy $F$ induces an isotopy from $U_0$ to $U_1$, meaning that $F(U_0,1) = U_1$.

    Writing $U_t:= F(U_0,t)$ and $\Omega_t:= \Omega\cap\overline{U_t} = \overline{U_0}\cap \overline{U_t}$, we see that $\Omega_0 = \Omega$, $\Omega_1 = \Sigma$ and, for $t\in(0,1)$, $\Omega_t$ has the structure of a smooth manifold with corner whose boundary is $\partial \Omega_t:= \Sigma^+_t \cup \Sigma^-_t$, where $\Sigma^+_t = \Sigma_t\cap \overline{U_0}$ and $\Sigma^-_t = \Sigma_0\cap \overline{U_t}$.
    The corner of $\Omega_t$ is $S_t = \partial\Sigma^+_t = \partial\Sigma^-_t$ and is two-sided with sides $\Sigma^\pm_t$.
    The same argument we used to obtain the smoothness of $t\mapsto \Sigma^+_t$ in the $C^\infty$ topology gives the smooth dependence on $t$ of $\Sigma^-_t$ in the $C^\infty$ topology.

    For any $r>0$ and any subset $A\subset M$, write $B_r(A) = \{x\in M:d_g(x,A)<r\}$.
    Let $\eta>0$ be sufficiently small to ensure that $B_\eta(\Sigma)$ is a tubular neighborhoods of $\Sigma$.

    Let $\epsilon>0$ be small enough so that $\Sigma_t\in B_{\eta/3}(\Sigma)$ whenever $t\leq\epsilon$ or $t\geq1-\epsilon$.
    Taking $\epsilon$ smaller, if necessary, we may assume that $\Sigma_t$ is the graph of a function $u_t$ over $\Sigma$ with respect to the unit normal vector field $N$.
    In other words,
    \[ \Sigma_t =\{\exp_x(u_t(x)N_x):x\in \Sigma\} \]
    whenever $t\leq \epsilon$ or $t\geq 1-\epsilon$.
    Here the exponential map is taken with respect to $g$.

    By the compactness of the interval $[\epsilon,1-\epsilon]$, by the smooth dependence on $t$ of $\Sigma^+_t$, $\Sigma^-_t$ and $S_t$ with respect to the $C^\infty$ topology, and since $S_t$ is a two-sided corner, there exists a smooth map $G:S_\epsilon\times [0,1]^2\times[\epsilon,1-\epsilon]\to M$ such that, for every $t\in[\epsilon,1-\epsilon]$, $G_t:=G(\cdot,\cdot,t):S_\epsilon\times[0,1]^2\to M$ is a diffeomorphism onto a collar neighborhood of $S_t$ inside $\widetilde{\Omega}_t$ (see \cite[Lemma 2.6.1]{Wall}).
    The map $G_t$ is taken to identify $S_\epsilon\times\{(1,0)\}$ with $S_t$, $S_\epsilon\times[0,1]\times\{0\}$ with a collar neighborhood of $S_t$ inside $\Sigma^-_t$, and $S_\epsilon\times\{0\}\times[0,1]$ with a collar neighborhood of $S_t$ inside $\Sigma^+_t$ (see the left image in Figure 1 below).

    Take $\delta\in(0,1)$ sufficiently small, and define a smooth curve inside the square $[0,1]^2$, connecting the points $(1-\delta,0)$ and $(1,\delta)$.
    This curve can be taken as the graph of an increasing smooth convex function $\rho:[1-\delta,1]\to[0,\delta]$, and we define $A:=\{(s,r)\in[1-\delta,1]\times [0,\delta]:r<\rho(s)\}$, an open subset relative to $[0,1]^2$.
    
    We straighten the corner $S_t\subset \Omega_t$ taking $\widetilde{\Omega}_t := \Omega_t\setminus G_t(S_\epsilon\times A)$ for $t\in[\epsilon,1-\epsilon]$ (see the right image in Figure 1 below).
    Then $\widetilde{\Omega}_t$ is contained in $\Omega_t$, is homeomorphic to $\Omega_t$ and has the structure of a smooth manifold with boundary, but without corners.

    \begin{figure}[ht]
    \centering
    \begin{tikzpicture}[scale=2.5]
    \begin{scope}[shift={(-1.8,0)}]
        \fill[blue!8] (0,0) rectangle (1,1);
        
        \draw[thick] (0,0) -- (1,0) -- (1,1);
        
        \fill (1,0) circle (.8pt);
        
        \node[below right] at (1,0) {$S_t$};
        \node[below] at (0.5,0) {$\Sigma^-_t$};
        \node[right] at (1,0.5) {$\Sigma^+_t$};
        
        \node at (0.5,0.5) {$\Omega_t$};
    \end{scope}
    \draw[->, thick, >=stealth] (-0.4,0.5) -- (0.1,0.5);
    \node[above] at (-0.15,0.5) {\small smoothing};
    \begin{scope}[shift={(0.5,0)}]
        \def\eps{0.25}
        
        \fill[blue!8] 
            (0,0) -- (0.75,0) .. controls (0.92,0) and (1,0.08) .. (1,\eps) -- (1,1) -- (0,1) -- (0,0) -- cycle;
        
        \draw[thick] (1,\eps) -- (1,1);
        \draw[thick] (0,0) -- (0.75,0);
        
        \draw[thick, red!70!black] (0.75,0) .. controls (0.92,0) and (1,0.08) .. (1,\eps);

        \draw[dashed, gray] (0.75,0) -- (1,0);
        \draw[dashed, gray] (1,0) -- (1,\eps);
        
        \node[below] at (0.75,0) {\small $(1-\delta,0)$};
        \node[right] at (1,\eps) {\small $(1,\delta)$};
        
        \node at (0.5,0.5) {$\widetilde{\Omega}_t$};
    \end{scope}
    \end{tikzpicture}
    \caption{Smoothing a codimension-two corner. Left: the collar neighborhood 
    $S_t \times [0,1]^2$ with corner at $S_t \times \{(1,0)\}$. Right: the smoothed 
    domain $\widetilde{\Omega}_t$ obtained by replacing the corner with a smooth convex curve connecting $(1-\delta,0)$ to $(1,\delta)$. This image was generated by the free online version of Anthropic's Claude.}
    \end{figure}

    By construction, the boundary $\partial\widetilde{\Omega}_t$ varies smoothly in $t$ in the $C^\infty$ topology.
    Moreover, for any $\eta'\in(0,\eta/3)$, we can take $\delta = \delta(\eta')>0$ sufficiently small so that the symmetric difference $\Omega_t\triangle \widetilde{\Omega}_t = \Omega_t\setminus\widetilde\Omega_t$ has closure contained in the tubular neighborhood $B_{\eta'}(S_t)$.

    Since the dependence on $t\in[\epsilon,1-\epsilon]$ of the boundary $\partial\widetilde\Omega_t$ is smooth in the $C^\infty$ topology, we can write $\partial\widetilde\Omega_s$ as a graph over $\partial\widetilde\Omega_t$ whenever $s$ and $t$ are sufficiently close to each other.
    Therefore, by the same argument as we did with $\Sigma_t$, we can construct an isotopy $\widetilde{f}:\partial\widetilde\Omega_\epsilon\times[\epsilon,1-\epsilon]\to M$ such that $\widetilde{f}(\partial\widetilde{\Omega}_\epsilon,t) = \partial\widetilde{\Omega}_t$ for all $t$ and $\widetilde{f}(x,\epsilon) = x$ for every $x\in\partial\widetilde{\Omega}_\epsilon$.
    
    Once again, by the isotopy extension theorem, we extend $\widetilde{f}$ to obtain an isotopy $\widetilde{F}:M\times[\epsilon,1-\epsilon]\to M$ such that $\widetilde{F}(x,\epsilon) = x$ for all $x\in M$.
    By construction, $\widetilde{F}(\widetilde{\Omega}_\epsilon,t) = \widetilde{\Omega}_t$ for all $t\in[\epsilon,1-\epsilon]$.
    In particular, $\widetilde{\Omega}_\epsilon$ is diffeomorphic to $\widetilde{\Omega}_{1-\epsilon}$.

    We claim that $\Omega$ is isotopic to $\widetilde{\Omega}_\epsilon$, and that $\widetilde{\Omega}_{1-\epsilon}$ is diffeomorphic to the closed unit ball $\bar{B}^{n+1}\subset\R^{n+1}$.
    This will finish our first step.

    For the first statement (that $\Omega$ is isotopic to $\widetilde{\Omega}_\epsilon$), we note that, taking $\delta>0$ sufficiently small, the boundary $\partial\widetilde{\Omega}_\epsilon$ is the normal graph over $\Sigma$.
    In other words, there is some nonnegative function $\widetilde{u}_\epsilon\in C^\infty(\Sigma)$ such that
    \[ \partial\widetilde{\Omega}_\epsilon = \{ \exp_x(\widetilde{u}_\epsilon(x)N_x): x\in\Sigma \}. \]
    Since $\partial\widetilde{\Omega}_\epsilon \subset B_{\eta/3}(\Sigma)\cap \Omega$, and since $N$ points towards $\Omega$, we have the bound $0\leq\widetilde{u}_\epsilon\leq \eta/3$.
    Now, take a non-increasing smooth function $h:\R\to[0,1]$ such $h(r) = 1$ for $r<\eta/3$ and $h(r) = 0$ for $r>2\eta/3$.
    Then, define the isotopy $\Omega\times[0,1]\to\Omega$ that maps $(x,t)$ to $x$ if $x\notin B_\eta(\Sigma)$ and takes $(\exp_y(rN_y),t)$ to $\exp_y((r+th(r)\widetilde{u}_\epsilon(y))N_y)$ for all $y\in\Sigma$ and $r\in(0,\eta)$.
    Since $h(r)=0$ for $r>2\eta/3$, this defines a smooth map $\Omega\times[0,1]\to \Omega$ that is, by construction, an embedding when restricted to each $\Omega\times\{t\}$.
    Moreover, it maps $\Omega\times\{1\}$ onto $\widetilde{\Omega}_\epsilon$, and this proves the first claim.

    We now prove that $\widetilde{\Omega}_{1-\epsilon}$ is diffeomorphic to $\bar{B}^{n+1}$.
    First, we observe that the set $\Omega_{1-\epsilon}$ is contained in the tubular neighborhood $B_{\eta/3}(\Sigma)$.
    In fact, since $\Sigma_{1-\epsilon}$ is the graph over $\Sigma$ of the function $u_{1-\epsilon}\in C^\infty(\Sigma)$, since $\Omega_{1-\epsilon}\subset\Omega$, and since $N$ points towards $\Omega$, $u_{1-\epsilon}\geq 0$ on $\Sigma^-_{1-\epsilon}$ and vanishes precisely at $S_{1-\epsilon} = \partial\Sigma^-_{1-\epsilon}$.
    Therefore, our construction of $\Omega_{1-\epsilon}$ ensures that $\Omega_{1-\epsilon} = \{\exp_y(rN_x):x\in\Sigma^-_{1-\epsilon}, 0\leq r\leq u_{1-\epsilon}(x)\}$.
    This is because a tangent vector $w\in T_xM$ at $x\in S_t$ points to the interior of $\Omega_t$ if and only if both $g(N^t_x,w)>0$ and $g(N_x,w)>0$.
    As a consequence, $\widetilde{\Omega}_{1-\epsilon}$ is also contained in $B_{\eta/3}(\Sigma)$.
    
    \begin{figure}[ht]
        \centering
        \begin{tikzpicture}
            \fill[blue!8] (-3,0) -- (-0.9,0) .. controls (-0.3,0) and (-0.3,0.2) .. (-0.9,0.6) -- (-3,2) -- cycle;
            
            \draw[thick] (-3,0) -- (3,0);
            \draw[thick] (-3,2) -- (3,-2);

            \draw[thick, red!70!black] (-3,0) -- (-0.9,0) .. controls (-0.3,0) and (-0.3,0.2) .. (-0.9,0.6) -- (-3,2);
            
            \node[right] at (3,0) {$\Sigma$};
            \node[right] at (3,-2) {$\Sigma_{1-\epsilon}$};
            \node at (-2,.75) {$\widetilde{\Omega}_{1-\epsilon}$};
        \end{tikzpicture}
        \caption{The domain $\widetilde\Omega_{1-\epsilon}$ is contained in the tubular neighborhood $B_{\eta/3}(\Sigma)$. This image was partially generated by the free online version of Anthropic's Claude.}
        \label{fig:placeholder}
    \end{figure}

    Since $S_{1-\epsilon}$ is isotopic, inside $\Sigma$, to $S_1 = S$, $\Sigma^-_{1-\epsilon}$ is diffeomorphic to the closed $n$-dimensional ball $\bar{B}^n\subset\R^n$.
    The same is true for $\Sigma^+_{1-\epsilon}$, because it is the graph of $u_{1-\epsilon}$ over $\Sigma^-_{1-\epsilon}$.
    Since $u_{1-\epsilon}>0$ in the interior of $\Sigma^-_{1-\epsilon}$, our description of $\Omega_{1-\epsilon}$ as the region under the graph of the restriction of $u_{1-\epsilon}$ to $\Sigma^-_{1-\epsilon}$ already implies that $\Omega_{1-\epsilon}$ is homeomorphic to the closed unit $(n+1)$-ball.
    In fact, since $\nabla u_{1-\epsilon}$ never vanishes at any point of $S_{1-\epsilon} = \{u_{1-\epsilon}=0\}$, $\Omega_{1-\epsilon}$ is diffeomorphic to the closed unit $(n+1)$-ball with a corner introduced at an equator of $\Sph^n$.
    Since $\widetilde{\Omega}_{1-\epsilon}$ is obtained from $\Omega_{1-\epsilon}$ by straightening the corner $S_{1-\epsilon}$, it is diffeomorphic to $\bar{B}^{n+1}$ (see \cite[Proposition 2.6.2, Lemma 2.6.3 and Proposition 2.6.3]{Wall}).
    
    \textit{Step 2.} $M$ is diffeomorphic to $\Sph^{n+1}$.
    
    We proved that $M$ is the union of two diffeomorphic balls $\Omega$ and $F_1(\Omega) = F(\Omega\times\{1\})$ along the identification $f_1= f(\cdot,1):\Sigma\to\Sigma$ of their common boundary $\Sigma = \partial \Omega = \partial F_1(\Omega)$.
    To show that it is diffeomorphic to $\Sph^{n+1}$, it suffices to prove that, after identifying $\Sigma=\Sigma_0$ with $\Sph^{n}$, $f_1$ is isotopic to $j:(x_0,...,x_n)\in \Sph^n\mapsto (x_0,...,x_{n-1},-x_n)\in \Sph^n$ (see \cite{Mil}).

    Let $\sigma_t\in P$ be the smooth path induced by $\Sigma_t = \Sigma_{\sigma_t}$.
    For $t\in[0,1]$, let $\zeta\in T_{\sigma_t}P\mapsto \phi_\zeta\in C^\infty(\Sigma_t)$ be the injective homomorphism that assigns $\zeta$ to the function $\phi_\zeta = \phi^t_\zeta$ that is the scalar part of its induced variation $\phi_\zeta N^t$ along $\Sigma_t$.
    For $t\in[0,1]$ and $x\in \Sigma_t$, we write $E_{x.t}:\zeta\in T_x\Sigma_t\mapsto (\phi_\zeta(x),\nabla\phi_\zeta(x))\in \R\times T_x\Sigma_t$ for the corresponding isomorphism described in \eqref{map: the isomorphism E^g_x,sigma}.
    
    We observe that $\phi^1_\zeta = -\phi^0_\zeta$ for all $\zeta\in T_{\sigma_0}P$.
    This happens because $\Sigma_0 = \Sigma_1 = \Sigma$ and $N^0 = - N^1$, so we must have the equality $\phi^0_\zeta N^0 = \phi^1_\zeta N^1 = - \phi^1_\zeta N^0$ for all $\zeta\in T_{\sigma_0}P$.
    Consequently, $E_{x,1} = - E_{x,0}$ for all $x\in \Sigma$.
    
    By construction, $\Sigma_t$ contains the point $p$ and $T_p S\subset T_p\Sigma_t$ for every $t\in[0,1]$.
    Take an orthonormal basis $w_1,...,w_{n-1}\in T_pS$, and let $\zeta^t_0,...,\zeta^t_n\in T_{\sigma_t} P$ be the unique basis whose associated variations $\phi^t_i:= \phi_{\zeta^t_i}\in C^\infty(\Sigma_t)$ satisfy:
    \[
        \begin{split}
            \phi_0^t(p) = 1, \ & \nabla\phi_0^t(p) = 0 \\
            \phi_i^t(p) = 0, \ & \nabla\phi_i^t(p) = w_i \ \ \   (i=1,...,n-1) \\
            \phi_n^t(p) = 0, \ & \nabla\phi_n^t(p)=v_t
        \end{split}
    \]
    for all $t\in[0,1]$.
    We observe that the relation $E_{x,1} = - E_{x,0}$ for all $x\in\Sigma$ implies that $\phi_i^0 = \phi_i^1$ for all $i=0,...,n-1$ and $\phi_n^1 = - \phi_n^0$, because $v_1 = - v_0$.

    Define $\Gamma:= \{(x,t)\in M\times[0,1] : x\in\Sigma_t\}$.
    Since $\Sigma_t$ depends smoothly on $t$ in the $C^\infty$ topology, $\Gamma$ is a smooth embedded hypersurface of $M\times[0,1]$.
    By Proposition \ref{prop: Sigma is diffeomorphic to S^n through psi_sigma}, the maps $\widetilde{\psi}_t(x):=(\phi^t_0(x),...,\phi^t_n(x))\in\R^{n+1}$ induce diffeomorphisms $\psi_t = \widetilde{\psi}_t/|\widetilde{\psi}_t|:\Sigma_t\to \Sph^n$ that vary smoothly in $t$, in the sense that $(x,t)\in\Gamma\mapsto (\psi^t(x),t)\in \Sph^n\times[0,1]$ is a diffeomorphism.

    Write $\hat{f}_1 = \psi_0\circ f_1\circ \psi_0^{-1}:\Sph^n\to \Sph^n$.
    Our goal is to prove that $\hat{f}_1$ is isotopic to $j$.
    Since $f_t = f(\cdot,t):\Sigma\to M$ maps $\Sigma = \Sigma_0$ diffeomorphically to $\Sigma_t$, we can define the isotopy $H:(y,t)\in \Sph^n\mapsto j\circ\psi_t\circ f_t\circ \psi_0^{-1}(y)\in \Sph^n$.
    Note that $H(y,0) = i(y)$ for all $y\in \Sph^n$, for $f_0$ is the identity map.
    Moreover, $H(\cdot,1) = j\circ\psi_1\circ f_1\circ \psi_0^{-1} = j\circ j\circ \psi_0\circ f_1\circ\psi_0 = \psi_0\circ f_1\circ\psi_0^{-1} = \hat{f}_1$, because $\psi_1 = j\circ \psi_0$.

    Therefore, $H$ is an isotopy between $j$ and $\hat{f}_1$, as we claimed.
    This shows that there is a diffeomorphism from $M$ to $\Sph^{n+1}$ that sends $\Sigma$ to an equator.

    \begin{namedtheorem}[Part II]
        If the hypersurfaces in $\Zoll$ are two-sided, they are separating.
    \end{namedtheorem}

    Suppose that each $\Sigma_\sigma\in\Zoll$ is two-sided.
    Let $\Sigma\in\Zoll$, $p\in\Sigma$, $S\subset \Sigma$ and $v\in T_p\Sigma$ be as in the proof of Part I.
    Since $\Sigma$ is two-sided, there is a unit normal vector field $N$ along $\Sigma$.
    Therefore, we can define $v_t$, $\Sigma_t$, $S_t$ and $N^t$ in an analogous way as we did in Part I.

    Observe that $S_t$ divides $\Sigma$ in two components, both diffeomorphic to open $n$-balls.
    We let $\Sigma^-_t$ be the closure of the component of $\Sigma\setminus S_t$ for which $-v$ is pointing towards.
    The analogous is true for $\Sigma_t$: $S_t$ cuts $\Sigma_t$ along two diffeomorphic open $n$-balls, and we denote by $\Sigma^+_t$ the closure of the one which $v_t$ is pointing towards.

    We take $\epsilon>0$ as before, so that $\Sigma_t$ is the normal graph over $\Sigma$ whenever $t\in[0,\epsilon]\cup[1-\epsilon,1]$.
    Since $[\epsilon,1-\epsilon]$ is compact, and since $S_t$ is the transversal intersection of $\Sigma_t$ with $\Sigma$, there is a smooth map $G:S\times [-1,1]^2\times [\epsilon,1-\epsilon]\to M$ so that, for each $t\in[\epsilon,1-\epsilon]$, $G_t = G(\cdot,\cdot,t):S\times [-1,1]^2\to M$ is a diffeomorphism from $S\times[-1,1]^2$ to a tubular neighborhood of $S_t$.
    Moreover, we can take $G$ so that $G_t(S\times [-1,1]\times\{0\})\subset \Sigma$ and $G_t(S\times\{0\}\times [-1,1])\subset \Sigma_t$.
    We may also assume that $G_t(S\times [-1,0]\times\{0\})\subset \Sigma^-_t$ and $G_t(S\times\{0\}\times [0,1])\subset \Sigma^+_t$.

    We then do the same smoothing procedure described in the proof of Part I to define a smooth embedded hypersurface $\widetilde{\Sigma}_t$ that is isotopic to $\Sigma$ and is obtained by smoothing the corner $S_t$ of $\Sigma^-_t\cup \Sigma^+_t$.
    We do this procedure for $t\in[\epsilon,1-\epsilon]$, and note that, since $\Sigma^+_{1-\epsilon}$ is the graph over $\Sigma^-_{1-\epsilon}$, $\Sigma^+_{1-\epsilon}\cup\Sigma^-_{1-\epsilon}$ encloses a diffeomorphic $(n+1)$-ball with a corner introduced at the equator.
    Since $\widetilde{\Sigma}_{1-\epsilon}$ is obtained from $\Sigma^+_{1-\epsilon}\cup\Sigma^-_{1-\epsilon}$ by smoothing along $S_{1-\epsilon}$, it encloses a region diffeomorphic to the $(n+1)$-ball.
    Therefore, $\widetilde{\Sigma}_{1-\epsilon}$ is separating, and the same is true for $\Sigma$, for it is isotopic to $\widetilde{\Sigma}_{1-\epsilon}$.

    \begin{namedtheorem}[Part III]
        $\psi_\sigma$ extends to a diffeomorphism $M\to\Sph^{n+1}$.
    \end{namedtheorem}

    Now that we know that all hypersurfaces in $\Zoll$ are separating, we go back to the notation in part one and consider $\Sigma$, $\Sigma_t$, $S$, $S_t$, $p$, $N$, $N^t$, $v$, $v_t$, $\Omega$ and $\Omega_t$ as defined there.

    \textit{Step 3.} Some preliminary constructions.
    
    Observe that the set $\Gamma':=\{(x,t)\in \Sigma\times[0,1]:x\in S_t\}$ is a submanifold of $\Sigma\times[0,1]$ diffeomorphic to $S\times [0,1]$ through an embedding of the form $(x,t)\in\Gamma'\mapsto(\gamma(x,t),t)\in \Sigma\times[0,1]$, where $\gamma(x,0) = x$ for all $x\in S$ and $\gamma(p,t) = p$ for all $t\in[0,1]$, that maps $\Gamma'$ to $S\times[0,1]$ and is isotopic to the inclusion $\Gamma'\subset \Sigma\times[0,1]$.
    This is true by the smooth dependence on $t\in[0,1]$ of $S_t$ in the $C^\infty$-topology, and by the fact that every $S_t$ contains $p$.

    By the isotopy extension theorem, we extend $(x,t)\mapsto(\gamma(x,t),t)$ to a diffeomorphism $\overline{\gamma}:M\times[0,1]\to M\times[0,1]$ that is isotopic to the identity $id:M\times[0,1]\to M\times[0,1]$, maps $\Sigma\times [0,1]$ to itself and restricts to the inclusion on $M\times\{0\}$.
    Moreover, $\overline{\gamma}$ can be taken so that the composition of $\overline{\gamma}$ with the projection $(x,t)\in M\times[0,1]\mapsto t\in[0,1]$ is the projection itself, and we write $\overline{\gamma}(x,t) = (\gamma(x,t),t)$ as an abuse of notation, so that $\gamma:M\times[0,1]\to M$ is an isotopy with $\gamma(\cdot,0) = id_M$.

    Replacing $\Sigma_t$ by $\gamma(\Sigma_t\times\{t\})$, we may assume that $\Sigma_t\cap \Sigma = S$ for all $t\in(0,1)$.
    Moreover, the collection $\{\Sigma_t\}_{t\in[0,1]}$ constitutes a smooth one-parameter family of hypersurfaces in the $C^\infty$ topology, and each $\Sigma_t$ is an element of the transitive family $\Zoll_t:=\{\gamma(\Sigma'\times\{t\}):\Sigma'\in \Zoll\}$.

    Observe also that $\Sigma_1 = \gamma(\Sigma\times\{1\}) = \gamma(\Sigma\times\{t\}) = \Sigma$ for all $t\in[0,1]$.
    In particular, $\Sigma\in\Zoll_t$ for all $t\in[0,1]$.

    Take a $g$-orthonormal basis $w_1,...,w_{n-1}\in T_pS_0$, and let $\phi_0^t,...,\phi_n^t\in C^\infty(\Sigma_t)$ be the unique infinitesimal variations of $\Sigma_t$ along $\Zoll_t$ with the prescribed conditions
    \[
        \begin{split}
            \phi_0^t(p) = 1, \ & \nabla\phi_0^t(p) = 0 \\
            \phi_i^t(p) = 0, \ & \nabla\phi_i^t(p) = w_i \ \ \   (i=1,...,n-1) \\
            \phi_n^t(p) = 0, \ & \nabla\phi_n^t(p)=v_t.
        \end{split}
    \]
    Recall that the existence and uniqueness of $\phi^t_0,...,\phi^t_n$ follows from Proposition \ref{prop: Zoll dimension reduction}.
    For each $t\in[0,1]$, we let $\psi_t:\Sigma_t\to \Sph^n$ be the diffeomorphism constructed as in Proposition \ref{prop: Sigma is diffeomorphic to S^n through psi_sigma}, given by
    \[ \psi_t(x) = \frac{(\phi^t_0(x),...,\phi^t_n(x))}{|(\phi^t_0(x),...,\phi^t_n(x))|}. \]
    
    Note that $\psi_t$ varies smoothly in $t$ in the sense that the set $\Gamma:=\{(x,t)\in M\times[0,1]:x\in \Sigma_t\}$ is a smooth hypersurface of $M\times[0,1]$ diffeomorphic to $\Sph^n\times[0,1]$ by the map $(x,t)\in\Gamma\mapsto(\psi_t(x),t)\in \Sph^n\times[0,1]$.
    Moreover, since $\gamma(\cdot,0)=id$ on $M$, we see that $\Zoll_0 = \Zoll$.

    Since $\Sigma_t\cap \Sigma = S$ does not depend on $t\in(0,1)$, the family $\psi_t$ induces an isotopy $(x,t)\in S\times [0,1]\mapsto \psi_t(x)\in \Sph^n$.
    We also observe that $\psi_0$ embeds $S$ in $\Sph^n$ as the equator $\Sph^{n-1}\times\{0\}$.
    The isotopy extension theorem thus gives an isotopy $\Psi = (\Psi_t)_{t\in[0,1]}:\Sph^n\times[0,1]\to \Sph^n$ such that $\Psi_0 = id$ and $\Psi(\psi_0(x),t) = \psi_t(x)$ for all $t\in[0,1]$.

    If we substitute $\psi_t$ by $\Psi_t^{-1}\circ\psi_t$, we can assume that the diffeomorphism $(x,t)\in\Gamma\mapsto (\psi_t(x),t)\in \Sph^n\times[0,1]$ restricts to the map $(x,t)\in S\times[0,1]\mapsto (\psi_0(x),t)\in \Sph^n\times[0,1]$.
    In other words, $\psi_t$ is equal to $\psi_0$ on $S$ and maps it to the equator $\Sph^{n-1}\times\{0\}\subset \Sph^n$.

    We do a similar procedure for $\Sigma$.
    Since $\Sigma\in\Zoll_t$ for all $t$, we may define $\overline{\phi}_0^t,...,\overline{\phi}_n^t\in C^\infty(\Sigma_0)$ as the unique infinitesimal deformations of $\Sigma$ along $\Zoll_t$ with first order conditions
    \[
        \begin{split}
            \overline{\phi}_0^t(p) = 1, \ & \nabla\overline{\phi}_0^t(p) = 0 \\
            \overline{\phi}_i^t(p) = 0, \ & \nabla\overline{\phi}_i^t(p) = w_i \ \ \   (i=1,...,n-1) \\
            \overline{\phi}_n^t(p) = 0, \ & \nabla\overline{\phi}_n^t(p)=v_0.
        \end{split}
    \]
    We also define the smooth one-parameter family of diffeomorphisms $\varphi_t:\Sigma\to \Sph^n$ as
    
    \[ \varphi_t(x) = \frac{\left(\overline{\phi}^t_0(x),...,\overline{\phi}^t_n(x)\right)}{\left|\left(\overline{\phi}^t_0(x),...,\overline{\phi}^t_n(x)\right)\right|}. \]

    We observe that $\varphi_0 = \psi_0$ and that $\varphi_1 = j\circ \psi_1$, where $j:\Sph^n\to \Sph^n$ is the inversion $j(y_0,...,y_n) = (y_0,...,y_{n-1},-y_n)$.
    This second statement holds, because $\Sigma_1 = \Sigma_0 =\Sigma$ and $v = v_0 = -v_1$.

    The map $(x,t)\in S\times[0,1]\mapsto\varphi_t(x)\in \Sph^n$ is an isotopy that sends $S\times\{0\}$ to the equator $\Sph^{n-1}\times\{0\}\subset \Sph^n$.
    It also fixes $p\in \Sigma$ to the first order, in the sense that both $\varphi_t(p) = \varphi_0(p)$ and $D\varphi_t(p) = D\varphi_0(p)$ for all $t\in[0,1]$.
    
    Therefore, the isotopy extension theorem give us an isotopy $\Phi = (\Phi_t)_{t}:\Sph^n\times[0,1]\to \Sph^n$ such that $\varphi_t(x) = \Phi_t(\varphi_0(x))$ for all $x\in S_0$.
    Moreover, $\Phi_t(\varphi_0(p)) = \varphi_0(p)$ and $D\Phi_t(\varphi_0(p)) = D\Phi_0(\varphi_0(p))$ for all $t\in[0,1]$.

    Replacing $\varphi_t$ by $\Phi_t^{-1}\circ\varphi_t$, we may assume that $\varphi_t(S) = \Sph^{n-1}\times\{0\}$ for all $t\in[0,1]$.
    Moreover, this change makes $\varphi_t = \varphi_0 = \psi_0 = \psi_t$ on $S$ for all $t$.

    \textit{Step 4.} $\psi_\sigma$ extends to $\Omega$.
    
    In the proof of Part I, we argued on the existence of an isotopy $f:\Sigma\times[0,1]\to M$ such that $f(x,0) = x$ for all $x\in\Sigma_0$ and $f(\Sigma\times\{t\}) = \Sigma_t$ for all $t\in[0,1]$.
    Here, it will be more convenient to identify $\Sigma$ with $\Sph^n$ through $\psi_0$ and take $f = (f_t)_{t\in[0,1]}:\Sph^n\times[0,1]\to M$ explicitly as $f_t = \psi_t^{-1}$.
    We note that $f_t = \psi_0^{-1}$ when restricted the equator $\Sph^{n-1}\times\{0\}$.

    As in the proof of Part I, we write $F:M\times[0,1]\to M$ for a choice of extension of $f$, in the sense that $F(\cdot,0) = id_M$ and $F(f(y,0),t) = f(y,t)$ for all $(y,t)\in \Sph^n\times[0,1]$.
    We also use the notation $\Omega_t = \Omega\cap F(\Omega\times\{t\})$ and note that, for $t\in(0,1)$, $\Omega_t$ has the structure of a smooth manifold with a two-sided corner, with boundary equal to $\partial \Omega_t = \Sigma^-_t\cup \Sigma^+_t$ and corner equal to $\partial \Sigma^-_t = \partial \Sigma^+_t = S$.
    Here, we write $\Sigma^-_t = \Sigma\cap F(\Omega\times\{t\})$ and $\Sigma^+_t = \Sigma_t\cap \Omega$, as we did previously, and observe that our changes in Step 1 assure that $\Sigma^-_t = \Sigma^-_0=:\Sigma^-$ for all $t\in(0,1)$.

    By our construction, $f$ induces a smooth path $h = (h_t)_{t\in[0,1)}:\Sph^n\times[0,1]\to M$ of piecewise smooth embeddings $h_t:\Sph^n\to M$ with image $h_t(\Sigma) = \partial\Omega_t$, and defined as $h_t(y) = \varphi_t^{-1}(y)$ for $y\in B^-:=\{(x_0,...,x_n)\in \Sph^n:x_n\leq 0\}$ and $h_t(y) = f(y,t)$ for $y\notin B^-$.
    Observe that, for $t\in(0,1)$, $h_t$ is smooth when restricted to $\Sigma^-_0$ and to $\Sigma^+_0 := \overline{\Sigma_0\setminus \Sigma^-_0}$, but creates a corner at $S_0$.

    Now, recall that we chose an $\epsilon>0$ sufficiently small and constructed $\widetilde{\Omega}_t\subset \Omega_t$ by smoothing out the corner of $\Omega_t$ for $t\in[\epsilon,1-\epsilon]$.
    This was made in such a way that, for a given $\eta>0$ small enough, the symmetric difference $\widetilde{\Omega}_t\triangle \Omega_t = \Omega_t\setminus\widetilde{\Omega}_t \subset B_\eta(S_0)$.

    Therefore, we can construct an isotopy $\widetilde{f} = (\widetilde{f}_t)_{t\in[\epsilon,1-\epsilon]}:\Sph^n\times[\epsilon,1-\epsilon]\to M$ as follows.
    For $t\in[\epsilon,1-\epsilon]$, $\widetilde{f}_t(y) = h_t(y)$ for $y\in \Sph^n\setminus B_\eta (\Sph^{n-1}\times\{0\})$; on $B_\eta(\Sph^{n-1}\times\{0\})$, $\widetilde{f}_t$ is a small perturbation of $h_t$ in the $C^0$ topology so that to make it smooth at $\Sph^{n-1}\times\{0\}$.
    In fact, we take $\widetilde{f}_t$ to be $h_t$ after smoothing the corner generated by $h_t$ at $\Sph^{n-1}\times\{0\}\subset \Sph^n$.
    Moreover, we do this so to make $\widetilde{f}_t$ a diffeomorphism from $\Sph^n$ to $\widetilde{\Sigma}_t:=\partial\widetilde{\Omega}_t$.

    Note that we can choose $\epsilon$, $\eta$ and $\widetilde{\Sigma}_\epsilon$ so that both $\Sigma_\epsilon$ and $\widetilde{\Sigma}_\epsilon$ are graphical over $\Sigma_0$, $\widetilde{\Sigma}_\epsilon$ is sufficiently close to $\Sigma_0$ in the $C^\infty$ topology, and the function $\widetilde{f}_\epsilon$ can be taken to be sufficiently close to $\psi_0^{-1} = f_0$ in the $C^\infty$ topology.
    Explicitly, for any fixed neighborhood $\mathcal{U}$ of $f_0$ in the $C^\infty$ topology, there are $\epsilon,\eta>0$ and a choice of $\widetilde{\Omega}_\epsilon$ and of $\widetilde{f} = (\widetilde{f}_t)_{t\in[\epsilon,1-\epsilon]}$ so that $\widetilde{f}_\epsilon\in \mathcal{U}$.
    If we take $\mathcal{U}$ sufficiently small, $\widetilde{f}_\epsilon$ and $f_0$ will be isotopic.

    Recall that, for $\epsilon>0$ sufficiently small, $\Sigma^+_{1-\epsilon}$ is the normal graph over $\Sigma^-_t = \Sigma^-_0$ of a smooth function $u_{1-\epsilon}$.
    In other words, we have the equality
    \[ \Sigma^+_{1-\epsilon} = \left\{\exp_x\left(u_{1-\epsilon}(x)N_x\right): x\in \Sigma^-\right\}. \]
    We also point out that $u_{1-\epsilon}>0$ on $\Sigma^-\setminus S$, and vanishes on $S$.

    We now construct an explicit homeomorphism $\Xi:\Omega_{1-\epsilon}\to \bar{B}^{n+1}$ defined by
    \begin{equation}
        \Xi(x,s) = \left( \varphi_1(x),\frac{2s\sqrt{1-|\varphi_1(x)|^2}}{u_{1-\epsilon}(x)} -\sqrt{1-|\varphi_1(x)|^2} \right).
        \label{eq: the homeomorphism Xi}
    \end{equation}
    This means that we first identify $\Omega_{1-\epsilon}$ with the set $\{(x,s)\in\Sigma^-\times\R:0\leq s\leq u_{1-\epsilon}(x)\}$ through the exponential map and then compute $\Xi$ as above.
    The right hand side of \eqref{eq: the homeomorphism Xi} is only defined when $u_{1-\epsilon}(x)>0$, which is true on $\Sigma^-\setminus S$.
    For $(x,0)\in S$, we let $\Xi(x,0) = (\varphi_1(x),0)$.
    
    Note that $\Xi$ is smooth away from the corner $S$ of $\Omega_{1-\epsilon}$.
    In fact, it is a diffeomorphism from $\Omega_{1-\epsilon}\setminus S$ to $\bar{B}^{n+1}\setminus (\Sph^{n-1}\times\{0\})$.
    Observe also that $\Xi(x,u_{1-\epsilon}(x)) = (j\circ \Xi)(x,0) = (j\circ \varphi_1)(x) = \psi_1(x)$ for all $x\in \Sigma^-$.
    Taking $\epsilon>0$ small enough, we can arrange that both functions $\varphi_1^{-1},\varphi_{1-\epsilon}^{-1}:\Sph^n \to M$ are sufficiently close in the $C^\infty$ topology, and also that $\psi^{-1},\psi_{1-\epsilon}^{-1}:\Sph^n\to M$ are sufficiently close in the $C^\infty$ topology.
    This can be made to ensure that, by perturbing $\Xi:\Omega_{1-\epsilon}\to \bar{B}^{n+1}$ slightly, we obtain a new continuous embedding $\widetilde{\Xi}:\Omega_{1-\epsilon}\to \R^{n+1}$ that restricts to a diffeomorphism from $\widetilde{\Omega}_{1-\epsilon}$ to $\bar{B}^{n+1}$, mapping $\widetilde{\Sigma}_{1-\epsilon} = \partial\widetilde{\Omega}_{1-\epsilon}$ to $\Sph^n$ as $(\widetilde{f}_{1-\epsilon})^{-1}:\widetilde{\Sigma}_{1-\epsilon}\to \Sph^n$.

    We then use the isotopy extension theorem to obtain an isotopy $\widetilde{F} = (\widetilde{F}_t)_{t\in[\epsilon,1-\epsilon]}:M\times[\epsilon,1-\epsilon]\to M$ so that $\widetilde{F}_\epsilon = id_M$ and $\widetilde{F}_{t}(\widetilde{f}_\epsilon(y)) = \widetilde{f}_t(y)$ for all $(y,t)\in \Sph^n\times[\epsilon,1-\epsilon]$.
    Note that $\widetilde{F}_{1-\epsilon}$ restricts to a diffeomorphism from $\widetilde{\Omega}_\epsilon$ to $\widetilde{\Omega}_{1-\epsilon}$, and the map $\widetilde{\Xi}\circ\widetilde{F}_{1-\epsilon}:\widetilde{\Omega}_\epsilon\to \bar{B}^{n+1}$ is a diffeomorphism equal to $\widetilde{f}_\epsilon^{-1}$ at the boundary $\partial\widetilde{\Omega}_\epsilon$.
    
    Since $\widetilde{f}_\epsilon$ is isotopic to $f_0 = \psi_0$, there is a diffeomorphism $F':\Omega \to \widetilde{\Omega}_\epsilon$ so that the composition $\widetilde{\Xi}\circ \widetilde{F}_{1-\epsilon}\circ F':\Omega \to \bar{B}^{n+1}$ restricts to $\psi_0$ at $\Sigma = \partial \Omega$.
    Therefore, we proved that there is an extension of $\psi_0 = \psi_\sigma:\Sigma\to \Sph^n$ to a diffeomorphism $\Omega \to \bar{B}^{n+1}$.
    
    This finishes the proof of the theorem, because we may replace $\Omega = \overline{U_0}$ by $F(\Omega,1) = \overline{U_1}$, and extend $\psi_\sigma$ for it as well, so to obtain an extension of $\psi_\sigma$ to a diffeomorphism from $M$ to $\Sph^{n+1}$.
\end{proof}

The same result is obviously false for manifolds admitting oriented transitive families of closed embedded hypersurfaces.
In fact, Example 2.5 shows that any topological classification of manifolds with oriented transitive families is impossible.

Theorem \ref{thm: classification of transitive families of two-sided hypersurfaces} cannot be improved.
For example, by the work of Ambrozio and Guajardo in \cite[see Remark 4.22]{AG}, there exist transitive families of $n$-spheres in $\Sph^{n+1}$ that cannot be identified with the set of equators by any diffeomorphism.

\subsection{Transitive families of one-sided hypersurfaces}
Suppose that $M$ is an $(n+1)$-dimensional manifold that contains a closed embedded one-sided hypersurface $S$.
It is known that $M$ has a double cover $\pi:\widetilde{M}\to M$ with a two-sided pre-image $\widetilde{S}:=\pi^{-1}(S)$, and the restriction of $\pi$ to $\widetilde{S}$ is a double cover $\widetilde{S}\to S$.

In the particular case where $M$ admits a transitive family $\Zoll$ of closed embedded one-sided hypersurfaces and $\Sigma\in\Zoll$, $M$ has a double cover $\pi:\widetilde{M}\to M$ with a two-sided pre-image $\widetilde{\Sigma} = \pi^{-1}(\Sigma)$.
Since all hypersurfaces in $\Zoll$ are isotopic to $\Sigma$, their pre-images under $\pi$ are two-sided and isotopic to $\widetilde{\Sigma}$.

Therefore, $\{\widetilde{\Sigma}=\pi^{-1}(\Sigma):\Sigma\in\Zoll\}$ is a transitive family of closed two-sided embedded hypersurfaces in $\widetilde{M}$.
By Theorem \ref{thm: classification of transitive families of two-sided hypersurfaces}, there exists a diffeomorphism from $\widetilde{M}$ to $\Sph^{n+1}$ that identifies each $\widetilde{\Sigma}$ with an embedded $n$-sphere.

Write $\Zoll = \{\Sigma_\sigma:\sigma\in P\}$.
Since the map $\pi:\widetilde{M}\to M$ induces an injective correspondence $\Sigma\mapsto\widetilde{\Sigma} = \pi^{-1}(\Sigma)$, $P$ also parametrizes the transitive family in $\widetilde{M}$.
In other words, $\{\widetilde{\Sigma}=\pi^{-1}(\Sigma):\Sigma\in\Zoll\} = \{\widetilde{\Sigma}_\sigma = \pi^{-1}(\Sigma_\sigma):\sigma\in P\}$.

As before, we denote by $\mu:G_n(M)\to M$ and $\nu:G_n(M)\to P$ the canonical projections, and by $\widetilde{\mu}$ and $\widetilde{\nu}$ for the analogous maps when we replace $M$ by $\widetilde{M}$.
Recall that we associated with any $\zeta\in T_\sigma P$ a section $\overline{\zeta}\in \Gamma(N\Sigma_\sigma)$.
Our construction did not depend on whether $\Sigma_\sigma$ was two-sided or not, so it is still well defined when $\Sigma_\sigma$ is one-sided.

As in the two-sided case, we can also consider the nodal set $P_{[\zeta]}:= P_{[\zeta]}^\sigma:=\{x\in\Sigma_\sigma:\overline{\zeta}_x = 0\}$.
The difference is that, since the normal bundle $N\Sigma_\sigma$ is nontrivial, $P_{[\zeta]}$ does not separate $\Sigma_\sigma$ into two components.

For any fixed pair $(x,\sigma)\in M\times P$ with $x\in\Sigma_\sigma$ and any Riemannian metric $g$ on $M$, we define the homomorphism
\[ \zeta\in T_\sigma P\mapsto (\overline{\zeta}(x), \nabla\overline{\zeta}(x))\in N_x\Sigma_\sigma\times \left(T^*_x\Sigma_\sigma\otimes N_x\Sigma_\sigma\right).  \]
Passing to the double cover, we can apply Proposition \ref{prop: Zoll dimension reduction} to conclude that this map is an isomorphism.

For each $x\in \Sigma_\sigma$, the evaluation map $\textnormal{ev}_x:\zeta\in T_\sigma P\mapsto \overline{\zeta}_x\in N_x\Sigma_\sigma$ is a surjective linear homomorphism, and its kernel $\ker (\textnormal{ev}_x)$, a well defined element of $\Proj T^*_\sigma P = G_n(T_\sigma P)$.
As we vary $x\in \Sigma_\sigma$, we obtain a smooth map
\[ \varphi_\sigma:x\in\Sigma_\sigma\mapsto \ker(\textnormal{ev}_x)\in\Proj T^*_\sigma P. \]
This is an abstract base-free map analogous to the  $\psi_\sigma$ constructed in the two-sided case.
In fact, passing to the two-sided cover $\widetilde{\Sigma}_\sigma$, Proposition \ref{prop: Sigma is diffeomorphic to S^n through psi_sigma} implies that $\varphi_\sigma:\Sigma_\sigma\to \Proj T^*_\sigma P$ is a diffeomorphism that identifies the transitive family $\{P^\sigma_\xi:\xi\in\Proj T_\sigma P\}$ with the family of projective hyperplanes.

When we take a Riemannian metric $g$ on $M$ and a discontinuous unit normal vector field $N$ along $\Sigma_\sigma$, we may write $\overline{\zeta} = \phi_\zeta \cdot N$ for some discontinuous function $\phi_\zeta:\Sigma_\sigma\to \R$.
If we choose a basis $\zeta_0,...,\zeta_n\in T_\sigma P$, and write $\phi_i = \phi_{\zeta_i}$, we obtain a concrete construction of $\varphi_\sigma$ as $\varphi_\sigma(x) = [\phi_0(x):\cdots:\phi_n(x)]\in \R\Proj^n$.
Note that $[\phi_0:\cdots:\phi_n]$ does not depend on the choice of unit normal $N$, so it is a smooth map.

Passing to the two-sided cover, we consider $\widetilde{g} = \pi^*g$ and fix a choice of unit normal vector field along $\widetilde{\Sigma}_\sigma$ to get a diffeomorphism $\psi_\sigma:\widetilde{\Sigma}_\sigma\to \Sph^n$ induced by the choice of basis $\zeta_0,...,\zeta_n\in T_\sigma P$.
The construction commutes in the sense that the diagram
\begin{equation}
    \begin{matrix}
        \widetilde{\Sigma}_\sigma & \xrightarrow{\psi_\sigma} & \Sph^n \\
        \downarrow& & \downarrow \\
        \Sigma_\sigma & \xrightarrow{\varphi_\sigma} & \R\Proj^n
    \end{matrix}
    \label{diag: psi_sigma and varphi_sigma commute the projections}
\end{equation}
commutes.
Here, $\Sph^n\to \R\Proj^n$ denotes the canonical projection and $\widetilde{\Sigma}_\sigma\to \Sigma$, the map $\pi$.

\begin{theorem}
    An $(n+1)$-manifold equipped with an unoriented transitive family $\Zoll$ of one-sided closed embedded hypersurfaces is diffeomorphic to $\R\Proj^{n+1}$ in such a way that the elements of $\Zoll$ are isotopic to the hyperplanes.
    \label{thm: classification of transitive families of one-sided hypersurfaces}
\end{theorem}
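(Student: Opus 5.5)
We pass to the double cover $\pi:\widetilde{M}\to M$ in which the preimages $\widetilde{\Sigma}_\sigma=\pi^{-1}(\Sigma_\sigma)$ are two-sided, as set up above. There they form an unoriented transitive family of closed two-sided embedded hypersurfaces parametrized by $P$. Fix $\sigma\in P$, a basis $\zeta_0,\dots,\zeta_n$ of $T_\sigma P$, the metric $\widetilde g=\pi^*g$ and a unit normal along $\widetilde{\Sigma}_\sigma$. Theorem \ref{thm: classification of transitive families of two-sided hypersurfaces}, Part III, then gives a diffeomorphism $\Psi:\widetilde{M}\to\Sph^{n+1}$ extending $\psi_\sigma:\widetilde{\Sigma}_\sigma\to\Sph^n\times\{0\}$. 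It remains to show that the deck involution $\tau$ of $\pi$ is conjugate, by some diffeomorphism, to the antipodal map $a$, in a way compatible with $\widetilde{\Sigma}_\sigma$. Then $M=\widetilde{M}/\tau\approx\Sph^{n+1}/a=\R\Proj^{n+1}$, with $\Sigma_\sigma$ sent to a hyperplane. Since every element of $\Zoll$ is isotopic to $\Sigma_\sigma$ (connect parameters in the connected manifold $P$), the isotopy statement follows.

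\emph{Behaviour of $\tau$ on $\widetilde{\Sigma}_\sigma$.} The one-sidedness of $\Sigma_\sigma$ means $\tau$ preserves $\widetilde{\Sigma}_\sigma$ and reverses its unit normal. The infinitesimal variations are lifts of sections of $N\Sigma_\sigma$, so the scalar functions satisfy $\phi_\zeta\circ\tau=-\phi_\zeta$. Hence $\psi_\sigma\circ\tau=-\psi_\sigma$, that is, $\psi_\sigma$ conjugates $\tau|_{\widetilde{\Sigma}_\sigma}$ to the antipodal map of $\Sph^n$. This is exactly the content of diagram \eqref{diag: psi_sigma and varphi_sigma commute the projections}. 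Moreover, $\tau$ swaps the two components $U_0,U_1$ of $\widetilde{M}\setminus\widetilde{\Sigma}_\sigma$, because it reverses the normal.

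\emph{Building the conjugacy.} Let $\Omega=\overline{U_0}$. Part III provides a diffeomorphism $\Psi_0:\Omega\to\bar B^{n+1}$, viewed as the upper closed hemisphere of $\Sph^{n+1}$, with $\Psi_0|_{\widetilde{\Sigma}_\sigma}=\psi_\sigma$. On $\overline{U_1}=\tau(\Omega)$ define $\Psi_1=a\circ\Psi_0\circ\tau$. On the common boundary this gives
\[
\Psi_1=a\circ\psi_\sigma\circ\tau=a\circ(-\psi_\sigma)=\psi_\sigma
\]
on the equator $\Sph^n\times\{0\}$, so $\Psi_0$ and $\Psi_1$ agree on $\widetilde{\Sigma}_\sigma$. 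The glued map $\Psi$ is a homeomorphism with $\Psi\circ\tau=a\circ\Psi$ by construction.

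\textbf{Main obstacle.} The main difficulty is smoothness across $\widetilde{\Sigma}_\sigma$. To fix it, I would choose $\Psi_0$ to be standard in a collar: on a $\tau$-invariant tubular neighbourhood $\widetilde{\Sigma}_\sigma\times(-1,1)$, on which $\tau(x,s)=(\tau x,-s)$ (obtained by averaging the metric so that $\tau$ is an isometry), require $\Psi_0(x,s)=(\cos s\,\psi_\sigma(x),\sin s)$. This is possible by the uniqueness of collars and a modification of $\Psi_0$ near the boundary through an isotopy, which keeps the boundary values. With this normal form, $\Psi_1$ is given by the same formula for $s<0$, and the glued map is smooth. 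Thus $M$ is diffeomorphic to $\R\Proj^{n+1}$, with $\Sigma_\sigma$ mapped onto the projective hyperplane $\Proj_{[e_{n+1}]}$.
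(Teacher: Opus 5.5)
Your proposal is correct and follows essentially the paper's route. Both arguments use the Part III extension of $\psi_\sigma$ over one closed side $\Omega$ of $\widetilde{\Sigma}_\sigma$, together with the fact that $\psi_\sigma$ turns the deck involution on $\widetilde{\Sigma}_\sigma$ into the antipodal map (the commutative diagram relating $\psi_\sigma$ and $\varphi_\sigma$), to exhibit $M$ as the closed ball with antipodal boundary points identified; your equivariant doubling $\Psi_1=a\circ\Psi_0\circ\tau$ is that same identification phrased on $\widetilde{M}$, and your collar normal form makes explicit the smoothness across $\widetilde{\Sigma}_\sigma$ that the paper leaves implicit.
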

\begin{proof}
    Fix $\Sigma_\sigma\in \Zoll$ and let $\widetilde{\Sigma}_\sigma$ be its two-sided cover.
    By Theorem \ref{thm: classification of transitive families of two-sided hypersurfaces}, $\widetilde{\Sigma}_\sigma$ is diffeomorphic to an $n$-sphere and separates $\widetilde{M}$ in two connected components.
    As in the proof of Part I of Theorem \ref{thm: classification of transitive families of two-sided hypersurfaces}, we denote by $\Omega$ the closure of one of the components of $\widetilde{M}\setminus\widetilde{\Sigma}_\sigma$.

    Since $\Sigma_\sigma$ is one-sided and $\pi:\widetilde{\Sigma}_\sigma\to \Sigma_\sigma$ is a two-sided cover, we see that $M = \pi(\Omega)$.
    In fact, $\pi$ is a diffeomorphism from the interior of $\Omega$ to its image, and $M$ is obtained (up to diffeomorphism) by identifying two points in $\partial\Omega = \widetilde{\Sigma}_\sigma$ that have the same image under $\pi$.

    By Part III of Theorem \ref{thm: classification of transitive families of two-sided hypersurfaces}, $\psi_\sigma$ extends to a diffeomorphism from $\Omega$ to the closed $(n+1)$-ball $\bar{B}^{n+1}$.
    At the boundary, the covering map $\pi:\widetilde{\Sigma}_\sigma\to \Sigma_\sigma$ is transformed under $\psi_\sigma$ and $\varphi_\sigma$ into the canonical projection $\Sph^n\to \R\Proj^n$, as expressed by the commutative diagram \eqref{diag: psi_sigma and varphi_sigma commute the projections}.
    Thus, $M$ is diffeomorphic to the manifold obtained by taking the closed $(n+1)$-ball and identifying two antipodal points on its boundary.
    This is exactly $\R\Proj^{n+1}$, and the hyperplane at infinity in this construction is $\Sigma_\sigma$.
\end{proof}


\section{Duality induced by transitive families}

Let $M$ be a smooth connected $(n+1)$-dimensional manifold equipped with an unoriented transitive family $\Zoll = \{\Sigma_\sigma:\sigma\in P\}$ of closed embedded hypersurfaces.
In the previous section, we successfully classified $M$ up to diffeomorphism.
Now we study the parameter space $P$.
The methods here are inspired by \cite{LM}, but we always assume that $n+1\geq 3$.

For each $\sigma\in P$, we constructed a covering map $\varphi_\sigma:\Sigma_\sigma\to \Proj T^*_\sigma P$.
This has the following property: two points $x,y\in\Sigma_\sigma$ have the same image under $\varphi_\sigma$ if and only if all infinitesimal deformations of $\Sigma_\sigma$ along $\Zoll$ that vanish at $x$ also vanish at $y$.
We shall call two such points $x,y \in \Sigma_\sigma$ \textit{strongly conjugate along $\Sigma_\sigma$ for variations in $\Zoll$}.

The possible topologies for $M$ give us a dichotomy.
In the case $M \approx \Sph^{n+1}$, each $\Sigma_\sigma$ is an $n$-sphere and, for every $\sigma\in P$ and $x\in \Sigma_\sigma$, the set of all $y\in \Sigma_\sigma$ strongly conjugate to $x$ along $\Sigma_\sigma$ for variations in $\Zoll$ is $\varphi_\sigma^{-1}(\varphi_\sigma(x))$ and consists of exatly two points.
Otherwise, each $\Sigma_\sigma$ is a projective $n$-space and, for every $\sigma\in P$ and $x\in \Sigma_\sigma$ there is no point aside from $x$ that is strongly conjugate to $x$ along $\Sigma_\sigma$.

There is a nice way to see that $\varphi_\sigma$ depends smoothly on $\sigma\in P$.
In fact, we can view each $\varphi_\sigma$ as a restriction of a more general smooth map $\varphi:G_n(M)\to G_n(P)$ to the Legendrian lift of $\Sigma_\sigma$.

The map $\varphi:G_n(M)\to G_n(P)$ in question takes a point $z\in G_n(M)$ and sends to $\varphi(z):=\nu_{*,z}(\ker\mu_{*,z})$, where $\mu:G_n(M)\to M$ and $\nu:G_n(M)\to P$ are the canonical projections.
It is well defined because $\ker\mu_*\cap \ker\nu_* = 0$, and the composition of $\varphi$ with the canonical projection $\pi:G_n(P)\to P$ is equal to $\nu$.

By definition of $\varphi$, if $z\in G_n(M)$ and $\nu(z) = \sigma$, then $\varphi(z)\in \Proj T^*_{\sigma}P = G_n(T_{\sigma}P)$ is the set of all $\zeta\in T_{\sigma}P$ whose associated variation $\overline{\zeta}\in\Gamma(N\Sigma_\sigma)$ vanishes at $\mu(z)$.
This identifies $\varphi_\sigma$ as the composition of the Legendrian lift $x\in \Sigma_\sigma\mapsto (x,T_x\Sigma_\sigma)\in G_n(M)$ with $\varphi$.

\begin{proposition}
    $\varphi:G_n(M)\to G_n(P)$ is a covering map of order equal to the order of the covering $\varphi_\sigma:\Sigma_\sigma\to G_n(T_\sigma P)$ for every $\sigma\in P$.
    \label{prop: varphi is a covering map}
\end{proposition}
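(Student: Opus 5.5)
The plan is to show that $\varphi$ is a surjective local diffeomorphism that is fiberwise compatible with the two bundle projections over $P$. Then I would realize it as a map of fiber bundles over $P$ whose restriction to each fiber is the covering $\varphi_\sigma$.

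First, I would observe that $\varphi$ commutes with the projections to $P$. Indeed, $\pi\circ\varphi=\nu$, where $\pi:G_n(P)\to P$ is the Grassmannian bundle. So $\varphi$ maps the fiber $\nu^{-1}(\sigma)$, which is the Legendrian lift $\mathcal{L}_{\Sigma_\sigma}\cong\Sigma_\sigma$, into the fiber $\pi^{-1}(\sigma)=G_n(T_\sigma P)$. On this fiber $\varphi$ restricts to $\varphi_\sigma$ composed with the Legendrian lift.

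By Proposition \ref{prop: Sigma is diffeomorphic to S^n through psi_sigma} (two-sided case), or the diagram \eqref{diag: psi_sigma and varphi_sigma commute the projections} (one-sided case), $\varphi_\sigma$ is a covering map. Its order $k\in\{1,2\}$ is the same for all $\sigma$, because it depends only on whether the elements of $\Zoll$ are one- or two-sided, and this is constant since all of them are isotopic.

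Second, I would check that $\varphi$ is a local diffeomorphism. Both manifolds have dimension $2n+1$. Let $z\in G_n(M)$ with $\nu(z)=\sigma$, and suppose $w\in T_zG_n(M)$ satisfies $\varphi_*w=0$. Then $\nu_*w=\pi_*\varphi_*w=0$, so $w$ is tangent to the fiber $\nu^{-1}(\sigma)$. But on that fiber $\varphi$ agrees with $\varphi_\sigma$ after the Legendrian identification, and $\varphi_\sigma$ is an immersion. Hence $w=0$, and by the inverse function theorem $\varphi$ is a local diffeomorphism. Smoothness of $\varphi$ itself is clear from its definition as $z\mapsto\nu_{*,z}(\ker\mu_{*,z})$, which is well defined since $\ker\mu_*\cap\ker\nu_*=0$.

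Third, I would conclude that $\varphi$ is a covering by a local triviality argument. Here I would use item (iii) of Definition \ref{def: transitive families}: near any $\sigma_0$ the bundle $\nu$ is trivial, so a neighborhood of $\nu^{-1}(\sigma_0)$ is diffeomorphic to $\Sigma\times B$ with $B\subset P$ a ball. On this neighborhood $\varphi$ becomes a map $\Sigma\times B\to G_n(TP|_B)\cong\Proj^n\times B$ over $B$ (trivializing $TP$ over $B$), given by $(x,\sigma)\mapsto(\varphi_\sigma(x),\sigma)$. This is a smooth family of $k$-sheeted coverings of $\Proj^n$ parametrized by a contractible base with compact fibers.

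Such a map is proper: the preimage of a compact set is closed in $\Sigma\times K'$ with $\Sigma$ compact. A proper surjective local diffeomorphism between manifolds is a covering map. Surjectivity holds because each $\varphi_\sigma$ is onto, so every fiber of $\pi$ is hit. The number of sheets over $(\xi,\sigma)$ is $|\varphi_\sigma^{-1}(\xi)|=k$.

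The main obstacle I expect is this properness step, specifically making sure $\varphi$ is proper globally and not just over small balls in $P$, since $M$ and $P$ are not assumed compact. I would handle it by noting that covering is a local property on the target. It therefore suffices to work over $\pi^{-1}(B)$, and there $\varphi^{-1}(\pi^{-1}(B))=\nu^{-1}(B)$, which is exactly the locally trivial neighborhood provided by (iii).
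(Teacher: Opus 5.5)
Your proof is correct. The local-diffeomorphism step and the sheet count agree with the paper's argument. In both, $\pi\circ\varphi=\nu$ forces $\ker\varphi_*\subset\ker\nu_*$, and on a leaf $\varphi$ is the immersion $\varphi_\sigma$. The count then uses $\varphi^{-1}(\pi^{-1}(\sigma))=\nu^{-1}(\sigma)$.

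The difference is how you pass from local diffeomorphism to covering. The paper invokes Theorems \ref{thm: classification of transitive families of two-sided hypersurfaces} and \ref{thm: classification of transitive families of one-sided hypersurfaces} (Theorem A) to know that $G_n(M)$ is a closed manifold and $G_n(P)$ is connected. An equidimensional local diffeomorphism between them is then automatically a covering. You instead get properness from the compactness of the leaves and the local triviality of $\nu$, and surjectivity from the surjectivity of each $\varphi_\sigma$.

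Your route makes the proposition independent of the classification. It holds for any unoriented transitive family of closed hypersurfaces without first knowing that $M$ is compact. The paper's version is shorter only because Theorem A is already available at that point.

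The obstacle you anticipate does not actually arise. Since $\nu$ is a bundle with compact fibre, it is proper on all of $G_n(M)$. For compact $K\subset G_n(P)$, the set $\varphi^{-1}(K)$ is closed in the compact set $\nu^{-1}(\pi(K))$. So $\varphi$ is globally proper, and no localization over balls $B\subset P$ is needed.
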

\begin{proof}
    By Theorem \ref{thm: classification of transitive families of two-sided hypersurfaces} and Theorem \ref{thm: classification of transitive families of one-sided hypersurfaces}, $G_n(M)$ and $P$ are connected closed manifolds.
    Consequently, $G_n(P)$ is also connected and has the same dimension as $G_n(M)$.

    Thus, we need only to prove that $\varphi$ has no critical points.
    For this, we first note that, since $\pi\circ\varphi = \nu$ and $\nu:G_n(M)\to P$ is a fiber bundle, $\ker\varphi_*\subset\ker\nu_*$.
    On the other hand, $\ker\nu_*$ is the tangent space of the foliation $\{\mathcal{L}_{\Sigma_\sigma}:\sigma\in P\}$, generated by Legendrian lifts of the surfaces in $\Zoll$, and the composition $x\in\Sigma_\sigma\mapsto(x,T_x\Sigma_\sigma)\in G_n(M)\mapsto\varphi(x,T_x\Sigma_\sigma)\in G_n(P)$ is the map $\varphi_\sigma$, as argued above.
    Since $\varphi_\sigma$ is a covering map, $\ker\varphi_*\cap\ker\nu_* = 0$, and this proves that $\ker\varphi_* = 0$, which implies that $\varphi$ is a covering map.

    For the last assertion, we note that the equality $\pi\circ\varphi = \nu$ also implies that, if $z,w\in G_n(M)$ and $\varphi(z) = \varphi(w)$, then $\nu(z) = \nu(w)$.
    Therefore, such points $z$ and $w$ must be in the Legendrian lift of $\Sigma_\sigma$, where $\sigma = \nu(z) = \nu(w)$.
    Along $\mathcal{L}_{\Sigma_\sigma}$, $\varphi$ is described as $\varphi_\sigma$.
    Thus, the order of the covering $\varphi$ is the order of the covering $\varphi_\sigma$.
\end{proof}

When $M\approx\R\Proj^{n+1}$, we get a strong form of duality between $M$ and $P$ that closely resembles the duality between points and hyperplanes in projective geometry.
We formalize this in the following two corollaries.

For the next result, recall that the tautological bundle $E\to G_n(P)$ is the vector bundle of all triples $(\sigma,\Pi,\zeta)$ such that $\sigma\in P$, $\Pi\in G_n(T_\sigma P)$ and $\zeta\in \Pi\subset T_\sigma P$.

\begin{corollary}
    If $\Zoll = \{\Sigma_\sigma:\sigma\in P\}$ is an unoriented transitive family of closed embedded hypersurfaces in $M\approx\R\Proj^{n+1}$, then $\varphi:G_n(M)\to G_n(P)$ is a diffeomorphism that induces an isomorphism from the vector bundle $\ker\mu_*\to G_n(M)$ to the tautological bundle $E\to G_n(P)$.
    \label{cor: varphi is a diffeomorphism when M=RP^n+1}
\end{corollary}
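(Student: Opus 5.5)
The plan is to combine Proposition \ref{prop: varphi is a covering map} with the topological dichotomy of Section 2. Since $M\approx\R\Proj^{n+1}$, the Zoll hypersurfaces cannot be two-sided: by Theorem \ref{thm: classification of transitive families of two-sided hypersurfaces}, two-sided elements would force $M\approx\Sph^{n+1}$, and $\R\Proj^{n+1}$ is not diffeomorphic to $\Sph^{n+1}$. So every $\Sigma_\sigma$ is one-sided. The discussion preceding Theorem \ref{thm: classification of transitive families of one-sided hypersurfaces}, using the commutative diagram \eqref{diag: psi_sigma and varphi_sigma commute the projections}, then shows that $\varphi_\sigma:\Sigma_\sigma\to\Proj T^*_\sigma P$ is a diffeomorphism. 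Equivalently, no point of $\Sigma_\sigma$ other than $x$ is strongly conjugate to $x$. Hence the covering $\varphi_\sigma$ has order one, and Proposition \ref{prop: varphi is a covering map} gives that $\varphi:G_n(M)\to G_n(P)$ is a covering of order one, that is, a diffeomorphism.

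For the bundle statement, I define the map fiberwise. For $z\in G_n(M)$, the derivative $\nu_{*,z}$ restricted to $\ker\mu_{*,z}$ is injective, because $\ker\mu_*\cap\ker\nu_*=0$. By the definition of $\varphi$, its image is exactly the hyperplane $\varphi(z)\subset T_{\nu(z)}P$. So the assignment $(z,\xi)\mapsto(\nu(z),\varphi(z),\nu_{*,z}\xi)$ sends $\ker\mu_{*,z}$ linearly and isomorphically onto the fiber $E_{\varphi(z)}$ of the tautological bundle. A dimension count confirms this: $\dim\ker\mu_*=n$, the fiber dimension of $G_n(M)\to M$, and the fibers of $E$ are $n$-dimensional.

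This map covers the diffeomorphism $\varphi$. It is smooth, since it is the restriction of the smooth bundle map $\nu_*:TG_n(M)\to TP$ to the smooth subbundle $\ker\mu_*$, and it is a linear isomorphism on each fiber. A smooth bundle map covering a diffeomorphism that is a fiberwise isomorphism is a vector bundle isomorphism, which completes the argument.

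The only delicate point is the first step: justifying that $\varphi_\sigma$ is injective rather than merely a covering. This should follow from the diagram \eqref{diag: psi_sigma and varphi_sigma commute the projections}. Since $\psi_\sigma$ is a diffeomorphism and $\pi:\widetilde\Sigma_\sigma\to\Sigma_\sigma$ is a double cover, $\varphi_\sigma$ is a covering of degree $2/2=1$. One must check that the two preimages of a point under $\pi$ are sent by $\psi_\sigma$ to antipodal points of $\Sph^n$. This holds because the lifted variations change sign under the deck transformation, as the normal $N$ flips.
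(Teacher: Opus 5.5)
Your proposal is correct and follows the paper's own proof. Proposition \ref{prop: varphi is a covering map} shows that the order of $\varphi$ equals that of $\varphi_\sigma$, which is one in the projective (one-sided) case, and the bundle isomorphism is $(z,u)\mapsto(\varphi(z),\nu_{*,z}u)$. You also spell out details the paper leaves implicit: why the elements of $\Zoll$ must be one-sided, why the diagram \eqref{diag: psi_sigma and varphi_sigma commute the projections} forces $\varphi_\sigma$ to be injective, and why the fiberwise map is a smooth vector bundle isomorphism.
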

\begin{proof}
    By Proposition \ref{prop: varphi is a covering map}, the order of the covering $\varphi$ is the order of the covering $\varphi_\sigma$ for any $\sigma\in P$.
    Since $M\approx\R\Proj^{n+1}$, we know from the previous section that $\varphi_\sigma$ is a diffeomorphism.
    Therefore, the same is true for $\varphi$.

    The second statement follows from the first by noting that the induced map takes $(z,u)\in\ker\mu_*$ to $(\nu_{*,z}(\ker\mu_{*,z}),\nu_{*,z}u)\in E$.
\end{proof}

\begin{corollary}
    Let $M$ be a connected $(n+1)$-dimensional manifold equipped with an unoriented transitive family $\Zoll = \{\Sigma_\sigma:\sigma\in P\}$ of closed embedded hypersurfaces.
    If $M\approx\R\Proj^{n+1}$, then the parameter space $P$ has an unoriented transitive family $\Zoll^*=\{\Sigma^*_x:x\in M\}$ of real projective $n$-spaces, where $\Sigma^*_x = \nu\left(\mu^{-1}(x)\right)$.
    In particular, $P$ is diffeomorphic to $\R\Proj^{n+1}$.
    \label{cor: Theorem B for M=RP^(n+1)}
\end{corollary}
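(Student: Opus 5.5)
The plan is to use the diffeomorphism $\varphi:G_n(M)\to G_n(P)$ from Corollary \ref{cor: varphi is a diffeomorphism when M=RP^(n+1)}. It transports the structure of $\mu$ onto $G_n(P)$, exactly as $\nu$ is transported onto $G_n(M)$. The family $\Zoll^*$ will consist of the images $\Sigma^*_x=\nu(\mu^{-1}(x))$, and their Legendrian lifts will be the leaves $\varphi(\mu^{-1}(x))$.

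\textbf{Step 1: the sets $\Sigma^*_x$ are embedded projective spaces.} Fix $x\in M$; the fiber $\mu^{-1}(x)=\Proj T^*_xM\approx\R\Proj^n$ is compact. The restriction $\nu|_{\mu^{-1}(x)}$ is an immersion because $\ker\mu_*\cap\ker\nu_*=0$. It is injective because $\nu(x,\Pi)=\nu(x,\Pi')$ means that $x$ lies on a single $\Sigma_\sigma$ with two tangent planes; since $\Sigma_\sigma$ is embedded, this forces $\Pi=\Pi'$. Hence $\Sigma^*_x$ is a closed embedded hypersurface of $P$ diffeomorphic to $\R\Proj^n$.

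\textbf{Step 2: identifying the Legendrian lifts.} For $z=(x,\Pi)\in\mu^{-1}(x)$, the tangent space of $\Sigma^*_x$ at $\nu(z)$ is $\nu_{*,z}(\ker\mu_{*,z})=\varphi(z)$. So the Legendrian lift of $\Sigma^*_x$ is exactly $\varphi(\mu^{-1}(x))$, and it is embedded because $\varphi$ is a diffeomorphism. Property (i) of Definition \ref{def: transitive families} follows.

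\textbf{Step 3: foliation and local triviality.} Since $\varphi$ is a diffeomorphism, the images under $\varphi$ of the fibers of the bundle $\mu:G_n(M)\to M$ are the fibers of the bundle $\mu\circ\varphi^{-1}:G_n(P)\to M$. Any fiber bundle gives a foliation that is locally trivial in the sense of (iii): a trivialization over a ball around $x$ gives a neighborhood of the form $\R\Proj^n\times\R^{n+1}$. This proves properties (ii) and (iii) with parameter space $M$. The family is unoriented, because $G_n(P)$ is the unoriented Grassmannian and $\Sigma^*_x$ is one-sided, being an $\R\Proj^n$ in the way described below.

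\textbf{Step 4: topology of $P$.} Compactness of $P$ holds because $P=\nu(G_n(M))$ with $G_n(M)$ compact; connectedness follows the same way, and in any case Theorem A needs no compactness. Apply Theorem A to $P$ with the family $\Zoll^*$. Its elements are $\R\Proj^n$, not spheres, so the sphere alternative is excluded and $P\approx\R\Proj^{n+1}$. More directly, an $\R\Proj^n$ cannot sit two-sided in the first case, because there every element of the family is a sphere.

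The only delicate point is the injectivity in Step 1, which uses the embeddedness of the $\Sigma_\sigma$. The rest is formal once $\varphi$ is known to be a diffeomorphism.
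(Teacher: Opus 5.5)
Your proof is correct and follows the paper's argument essentially step by step. The steps match: $\nu|_{\mu^{-1}(x)}$ is an immersion because $\ker\mu_*\cap\ker\nu_*=0$; it is injective because the $\Sigma_\sigma$ are embedded; the Legendrian lift of $\Sigma^*_x$ is $\varphi(\mu^{-1}(x))$ with $\varphi$ a diffeomorphism; properties (ii)--(iii) come from the bundle structure of $\mu$; and Theorem A gives $P\approx\R\Proj^{n+1}$. The appeal to one-sidedness in Step 3 is superfluous, since the family is unoriented simply because properties (i)--(iii) are verified in $G_n(P)$.
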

\begin{proof}
    Since $\ker\mu_*\cap \ker\nu_* = 0$, each $\Sigma^*_x$ is an immersed real projective $n$-space in $P$.
    To show that they are embedded, we note that if $x\in M$ and $z,w\in \mu^{-1}(x) = G_n(T_xM)$ are such that $\nu(z) = \nu(w) =:\sigma$, then the Legendrian lift $\mathcal{L}_{\Sigma_\sigma}$ contains both $z$ and $w$.
    Since $\Sigma_\sigma$ is embedded, we conclude that $z = w = (x,T_x\Sigma_\sigma)$.

    By Corollary \ref{cor: varphi is a diffeomorphism when M=RP^n+1}, $\varphi$ is a diffeomorphism between $G_n(M)$ and $G_n(P)$ that identifies the projection $\nu:G_n(M)\to P$ with the canonical projection $\pi:G_n(P)\to P$, in the sense that $\pi\circ\varphi = \nu$.
    In particular, $\varphi\left(\mu^{-1}(x)\right)$ is the Legendrian lift of $\Sigma^*_x$ for every $x\in M$.
    Since $\mu:G_n(M)\to M$ is a fiber bundle, we conclude that $\Zoll^* = \{\Sigma^*_x:x\in M\}$ is an unoriented transitive family of embedded real projective $n$-spaces in $P$.
    By Theorems \ref{thm: classification of transitive families of two-sided hypersurfaces} and \ref{thm: classification of transitive families of one-sided hypersurfaces}, $P$ is diffeomorphic to $\R\Proj^{n+1}$.
\end{proof}

The remaining case of $M\approx \Sph^{n+1}$ turns out to be more subtle.
We still have a family $\Zoll^*=\{\Sigma^*_x:x\in M\}$ of closed embedded real projective $n$-spaces, but it will not be an unoriented transitive family in general.

To see where the difficulty lies, observe that $\varphi:G_n(M)\to G_n(P)$ is a double covering map.
Therefore, if two distinct points $z,w\in G_n(M)$ have the same image $\varphi(z) = \varphi(w)$, their projections $x = \mu(z)$ and $y = \mu(w)$ induce dual hyperplanes $\Sigma^*_x$ and $\Sigma^*_y$ that pass through $\sigma = \nu(z) = \nu(w)$ with the same tangent space $T_\sigma \Sigma^*_x = T_\sigma \Sigma^*_y$.
The problem is that there is no reason to expect that the equality $\Sigma^*_x= \Sigma^*_y$ holds.

The correct way to think about this is as in the duality between points and oriented equators in $\Sph^{n+1}$.
However, the cost is a weaker topological rigidity.

Suppose $M\approx \Sph^{n+1}$, and fix an orientation on $M$.
In this case, each $\Sigma\in\Zoll$ is an embedded $n$-sphere.
We define a double cover $P^+$ of $P$ as the set of pairs $(\sigma,\mathcal{O})$, where $\mathcal{O}$ is a choice of orientation for $\Sigma_\sigma$, and note that there is an $n$-sphere bundle $\nu^+:G^+_n(M)\to P^+$ that collapses the oriented Legendrian lift of $(\Sigma_\sigma,\mathcal{O})$ to the point $(\sigma,\mathcal{O})$.

The canonical projection $\mu^+:G_n(M)\to M$ is an $n$-sphere bundle over $M$.
Therefore, it induces a long exact sequence in homotopy groups
\[ \cdots\to\pi_k(\Sph^n)\to\pi_k(G^+_n(M))\to\pi_k(M)\to\pi_{k-1}(\Sph^n)\to\cdots, \]
from which we conclude that $\pi_k(G^+_n(M)) = 0$ for $1\leq k\leq n-1$, because $M\approx \Sph^{n+1}$.
Replacing this in the long exact sequence
\[ \cdots\to\pi_k(\Sph^n)\to\pi_k(G^+_n(M))\to\pi_k(P^+)\to\pi_{k-1}(\Sph^n)\to\cdots \]
induced by $\nu^+:G^+_n(M)\to P^+$, we see that $\pi_k(P^+) = 0$ for $1\leq k\leq n-1$.

Thus, it follows from the solution to the Generalized Poincar\'e Conjecture that $P^+$ is homeomorphic to the $(n+1)$-sphere.
In particular, this implies that $P$ has the same homotopy groups and the same cohomology ring with $\Z_2$ coefficients as $\R\Proj^{n+1}$.

Fix an orientation for $G^+_n(M)$.
Since $M$ is oriented, this is equivalent to giving smoothly compatible orientations on the fibers of $G^+_n(M)$.
Therefore, we regard each $(\mu^+)^{-1}(x)$ as an oriented embedded $n$-sphere in $G^+_n(M)$ and $\ker\mu^+_*$ as an oriented sub-bundle of $TG^+_n(M)$.

For each $x\in M$, we consider the oriented immersed $n$-sphere $\Sigma^+_x:= \nu^+\left( (\mu^+)^{-1}(x) \right)\subset P^+$.
It is immersed, because $\ker\mu^+_*\cap\ker\nu^+_* = 0$.
In fact, it is embedded, because two points $z,w\in (\mu^+)^{-1}(x)$ that have the same image $\nu^+(z) = \nu^+(w) = (\sigma,\mathcal{O})$ lie in the oriented Legendrian lift of $(\Sigma_\sigma,\mathcal{O})$.
Since $\Sigma_\sigma$ is embedded, $z=w = (x,T_x\Sigma_\sigma,\mathcal{O}_x)$.

\begin{proposition}
    The map $\psi:z\in G^+_n(M)\mapsto \nu^+_{*,z}(\ker\mu^+_{*,z})\in G^+_n(P^+)$ is a diffeomorphism.
    Consequently, the collection $\Zoll^+:=\{\Sigma^+_x:x\in M\}$ is an oriented transitive family of embedded spheres in $P^+$.
    \label{prop: psi is a covering map}
\end{proposition}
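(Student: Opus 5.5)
We follow the argument of Proposition \ref{prop: varphi is a covering map}, now in the oriented setting, and then use simple connectivity to upgrade the covering map to a diffeomorphism.

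\emph{Well-definedness and projection.} Since $\ker\mu^+_*\cap\ker\nu^+_*=0$, the derivative $\nu^+_{*,z}$ maps the $n$-dimensional fiber $\ker\mu^+_{*,z}$ injectively onto a hyperplane of $T_{\nu^+(z)}P^+$. We orient this hyperplane by pushing forward the fixed orientation of $\ker\mu^+_*$. This defines a smooth map $\psi:G^+_n(M)\to G^+_n(P^+)$ with $\pi^+\circ\psi=\nu^+$, where $\pi^+:G^+_n(P^+)\to P^+$ is the canonical projection. In particular, $\ker\psi_*\subset\ker\nu^+_*$.

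\emph{Local diffeomorphism.} Restricted to the oriented Legendrian lift of $(\Sigma_\sigma,\mathcal{O})$, the map $\psi$ becomes the oriented version of $\varphi_\sigma$. It sends $x\in\Sigma_\sigma$ to $\ker(\mathrm{ev}_x)\subset T_\sigma P$, identified with $T_{(\sigma,\mathcal{O})}P^+$ and equipped with an orientation. Passing to a basis and a unit normal compatible with $\mathcal{O}$, this map is $\psi_\sigma:\Sigma_\sigma\to\Sph^n$ of Proposition \ref{prop: Sigma is diffeomorphic to S^n through psi_sigma}. Indeed, the oriented hyperplanes of $\R^{n+1}$ are identified with $\Sph^n$ via unit conormals, and the oriented kernel at $x$ corresponds to $\pm\psi_\sigma(x)$ with a sign that is constant by continuity. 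Hence this restriction is a diffeomorphism onto the fiber $G^+_n(T_{(\sigma,\mathcal{O})}P^+)\approx\Sph^n$, and so $\ker\psi_*\cap\ker\nu^+_*=0$. Combined with the previous paragraph, $\ker\psi_*=0$. Since $\dim G^+_n(M)=\dim G^+_n(P^+)$, the map $\psi$ is a local diffeomorphism.

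\emph{Covering and degree one.} $G^+_n(M)$ is compact because $M\approx\Sph^{n+1}$. Also, $P^+$ is compact and connected, since it is homeomorphic to $\Sph^{n+1}$ as shown above. A local diffeomorphism from a compact space to a connected one is a covering map. Exactly as in Proposition \ref{prop: varphi is a covering map}, two points with the same image have the same $\nu^+$-image, so they lie on one oriented Legendrian lift, where $\psi$ is injective. Therefore $\psi$ is injective, hence a diffeomorphism.

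\emph{Transitive family.} Since $\pi^+\circ\psi=\nu^+$, the diffeomorphism $\psi$ carries each fiber $(\mu^+)^{-1}(x)$ onto the oriented Legendrian lift of $\Sigma^+_x$. Embeddedness of $\Sigma^+_x$ was shown before the statement. The fibers of $\mu^+$ give a locally trivial foliation of $G^+_n(M)$, and $\psi$ transports it to a foliation of $G^+_n(P^+)$ by these lifts. This verifies (i)--(iii) of Definition \ref{def: transitive families} for $\Zoll^+$.

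The main obstacle is the orientation bookkeeping in the local-diffeomorphism step. One must check that the orientation of $\nu^+_*(\ker\mu^+_*)$ induced from the fixed orientation on $G^+_n(M)$ matches $\psi_\sigma$ up to a global sign on each lift. Without this, the restriction to a lift could fold over, rather than reproduce the diffeomorphism of Proposition \ref{prop: Sigma is diffeomorphic to S^n through psi_sigma}. I would settle this by a continuity argument in $x\in\Sigma_\sigma$, noting that the relevant orientations never degenerate.
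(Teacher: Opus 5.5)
Your proof is correct, but it takes a different route from the paper's. The paper never looks at $\psi$ on the oriented Legendrian lifts. It places $\psi$ in a commutative square over $\varphi$, whose vertical arrows $G^+_n(M)\to G_n(M)$ and $G^+_n(P^+)\to G_n(P)$ are coverings. It deduces that $\psi$ is a covering because $\varphi$ is one (Proposition \ref{prop: varphi is a covering map}). It then counts sheets: $\varphi$ has order two (since $\varphi_\sigma$ is two-to-one when $M\approx\Sph^{n+1}$), the left arrow is two-to-one and the right arrow four-to-one, so $\psi$ has exactly one sheet.

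You instead redo the argument of Proposition \ref{prop: varphi is a covering map} in the oriented setting. You show that $\psi$ maps each fiber of $\nu^+$ diffeomorphically onto the corresponding fiber of $\pi^+$, where the restriction is $\pm\psi_\sigma$.

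The paper's route is shorter and needs no orientation bookkeeping, because orientations are forgotten once one descends to $G_n(P)$. It does rest on the sheet count and on the routine fact that a map lifting coverings in such a square is itself a covering. Yours is more self-contained and shows \emph{why} $\psi$ is injective: the orientation of $\nu^+_*(\ker\mu^+_*)$ resolves exactly the two-to-one ambiguity of $\varphi_\sigma$.

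The obstacle you flag is settled by uniqueness of lifts. The maps $x\mapsto\psi(x,T_x\Sigma_\sigma,\mathcal{O}_x)$ and $\psi_\sigma$ are both continuous lifts of $\varphi_\sigma$ through the double cover $G^+_n(T_\sigma P)\to G_n(T_\sigma P)$ on the connected $\Sigma_\sigma$. Hence they agree up to a global antipodal sign.

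Your covering paragraph is also unnecessary. Fiberwise bijectivity together with $\pi^+\circ\psi=\nu^+$ already makes $\psi$ bijective, surjectivity included. A bijective local diffeomorphism is a diffeomorphism, with no appeal to compactness or connectedness.
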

\begin{proof}
    We note that the map is well-defined, because $\ker\mu^+_{*,z}$ is oriented for each $z\in G^+_n(M)$.
    It is a covering map, because $\varphi:G_n(M)\to G_n(P)$ is a covering map, and because the diagram
    \[
        \begin{matrix}
            G^+_n(M) & \xrightarrow{\psi} & G^+_n(P^+) & \rightarrow & P^+ \\
            \downarrow & & \downarrow & & \downarrow \\
            G_n(M) & \xrightarrow{\varphi} & G_n(P) & \rightarrow & P
        \end{matrix}
    \]
    commutes and all its down-arrows are coverings.
    Note that the order of $\varphi$ is two, the map $G^+_n(M)\to G_n(M)$ is a double cover and $G^+_n(P^+)\to G_n(P)$ is four-to-one.
    Together, these force $\psi$ to be a diffeomorphism.

    The last statement now follows because the fibers of $G^+_n(M)$ form a foliation and are mapped, under $\psi$, to the oriented Legendrian lifts of the elements of $\Zoll^+$.
\end{proof}

\begin{proposition}
    Let $M$ be a smooth connected $(n+1)$-dimensional manifold equipped with an unoriented transitive family $\Zoll = \{\Sigma_\sigma:\sigma\in P\}$ of closed embedded hypersurfaces.
    If $M\approx \Sph^{n+1}$, then $P$ is homeomorphic to $\R\Proj^{n+1}$.
    \label{prop: P is homeomorphic to RP^(n+1) when M=S^(n+1)}
\end{proposition}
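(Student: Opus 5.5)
We will show that $P$ is the quotient of $P^+$ by a free involution, and that this involution is conjugate, via a homeomorphism $P^+\approx\Sph^{n+1}$, to the antipodal map.

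\textbf{The involution.} Let $\tau:P^+\to P^+$ be orientation reversal, $\tau(\sigma,\mathcal{O})=(\sigma,-\mathcal{O})$. It is a smooth free involution with $P^+/\tau=P$. We already know that $P^+$ is homeomorphic to $\Sph^{n+1}$. The issue is that not every free involution on a homotopy sphere is standard, so we must use the geometry of $\Zoll^+$ to identify $\tau$.

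\textbf{Dual Zoll structure.} By Proposition \ref{prop: psi is a covering map}, $\Zoll^+=\{\Sigma^+_x:x\in M\}$ is an oriented transitive family of embedded $n$-spheres in $P^+$. We check that $\tau$ preserves each $\Sigma^+_x$ and reverses its orientation:
\begin{itemize}
\item If $(\sigma,\mathcal{O})\in\Sigma^+_x$, then $x\in\Sigma_\sigma$. Reversing the orientation gives another point of $(\mu^+)^{-1}(x)$, so $\tau(\Sigma^+_x)=\Sigma^+_x$.
\item On $(\mu^+)^{-1}(x)\cong\Sph^n$, the fiber of oriented hyperplanes at $x$, $\tau$ corresponds to the antipodal map.
\end{itemize}
Consequently, each $\Sigma^+_x$ is $\tau$-invariant and $\Sigma^+_x/\tau\cong\R\Proj^n$ is the dual hypersurface $\Sigma^*_x\subset P$.

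\textbf{Conjugating $\tau$ to the antipodal map.} The plan is to mimic Part I of Theorem \ref{thm: classification of transitive families of two-sided hypersurfaces}, but equivariantly.
\begin{enumerate}
\item Fix $x_0\in M$ and set $S=\Sigma^+_{x_0}\subset P^+$, a $\tau$-invariant embedded $n$-sphere on which $\tau$ acts antipodally via the identification with $G^+_n(T_{x_0}M)$.
\item Show that $S$ separates $P^+$ into two components $U_0,U_1$, and that $\tau$ swaps them. Separation follows because $P^+$ is a homotopy sphere. For the swap, note that $\tau$ reverses the orientation of $S$; if it also preserves the orientation of $P^+$, it must reverse the coorientation of $S$. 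Alternatively, one can move $x$ along a path in $M$ from $x_0$ back to itself, rotating as in Part I, and track the coorientation.
\item Then $P=\pi(\overline{U_0})$ is $\overline{U_0}$ with antipodal points of $\partial\overline{U_0}=S$ identified.
\item Show that $\overline{U_0}$ is homeomorphic to a closed ball. The isotopy and corner-smoothing argument of Part I uses only that the family is transitive and the hypersurfaces are closed embedded spheres; those properties hold for $\Zoll^+$ after passing to an unoriented version. Alternatively, in the topological category: $\overline{U_0}$ is a compact contractible manifold with simply connected boundary sphere for $n+1\ge 3$, hence a topological ball by the Poincaré conjecture and the Schoenflies / $h$-cobordism results.
\item By Alexander's trick, any homeomorphism $S\to\Sph^n$ conjugating $\tau|_S$ to the antipodal map extends to a homeomorphism $\overline{U_0}\to\bar{B}^{n+1}$. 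Therefore $P\approx\bar{B}^{n+1}/(\text{antipodal on }\partial)\approx\R\Proj^{n+1}$.
\end{enumerate}

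\textbf{Main obstacle.} The hardest step is showing that $\tau$ interchanges the two sides of $S$ rather than preserving them. If $\tau$ preserved $U_0$, the quotient would not be $\R\Proj^{n+1}$. I would argue this by showing $\tau$ acts on $H_{n+1}(P^+;\Z)$ so that it cannot fix a side. A free involution on $\Sph^{n+1}$ has Lefschetz number $0$, so it acts by $(-1)^{n}$ on top homology. On $S\cong\Sph^n$, $\tau$ acts as the antipodal map, with degree $(-1)^{n+1}$. The Mayer--Vietoris boundary map $H_{n+1}(P^+)\to H_n(S)$ is $\tau$-equivariant, so if $\tau$ preserved the sides, the two signs would have to agree. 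They do not, so $\tau$ must swap $U_0$ and $U_1$.
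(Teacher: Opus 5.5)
Your proposal is correct and follows the paper's route: Schoenflies splits $P^+\approx\Sph^{n+1}$ along $\Sigma^+_x$ into two closed balls, the deck involution $\tau$ exchanges them, and $P$ is one ball with boundary points identified by $\tau$. Your Lefschetz/Mayer--Vietoris degree count supplies the one-sidedness of $\Sigma^*_x$ that the paper simply asserts (Brouwer is quicker: a $\tau$-invariant closed ball $\overline{U_0}$ would contain a fixed point), and your conjugation of $\tau|_{\Sigma^+_x}$ to the antipodal map via $G^+_n(T_{x_0}M)$, followed by the Alexander trick, makes explicit the final identification that the paper leaves implicit.

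There are two slips to fix. First, in step 4 rely only on the topological Schoenflies route: forgetting orientations in $\Zoll^+$ does not in general give an unoriented transitive family, which is precisely why the paper obtains only a homeomorphism here. Second, $\tau$ reverses the orientation of $\Sigma^+_x$ only for even $n$, as your own degree $(-1)^{n+1}$ shows, so drop the orientation heuristic in step 2 in favor of the degree argument.
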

\begin{proof}
    We already know that $P$ has a double cover $P^+$ homeomorphic to $\Sph^{n+1}$.
    Moreover, for any fixed $x\in M$, $\Sigma^*_x = \nu\left(\mu^{-1}(x)\right)$ and $\Sigma^+_x$ are, respectively, an embedded real projective $n$-space in $P$ and an embedded $n$-sphere in $P^+$.
    Note that $\Sigma^+_x$ is the pre-image of $\Sigma^*_x$ under the canonical projection $P^+\to P$.

    By the Schoenflies Theorem (see \cite[Chapter IV, Theorem 19.11]{Bre}), $\Sigma^+_x$ divides $P^+$ into two components, each homeomorphic to the open $(n+1)$-ball.
    Let $\Omega$ be the closure of one of them.
    Then $\Omega$ has the structure of a smooth manifold with boundary $\partial\Omega = \Sigma^+_x$ that is homeomorphic to the closed $(n+1)$-ball.
    Since the covering $P^+\to P$ restricts to a double cover $\Sigma^+_x\to \Sigma^*_x$, and since $\Sigma^*_x$ is one-sided, we see that $P^+\to P$ maps the interior of $\Omega$ diffeomorphically onto its image, and $P$ is obtained (up to diffeomorphism) by identifying the points in the boundary of $\Omega$ having the same image under $\Sigma^+_x\to \Sigma^*_x$.
    Thus, $P$ is homeomorphic to $\R\Proj^{n+1}$.
\end{proof}

\section{Almgren-Pitts theory and surface Zoll manifolds}

We now move from a differential topological to a differential geometric perspective.
This section serves to fix the notation used in the Almgren-Pitts min-max theory, and to give the definition and basic properties of hypersurface Zoll manifolds, especially in the three-dimensional case.

\subsection{Preliminaries on Almgren-Pitts theory}
Given a closed Riemannian $(n+1)$-dimensional manifold $(M,g)$, we denote by $\I_k(M,\Z_2)$ the space of its mod-2 flat $k$-chains with the topology induced by the flat metric $\Flat$, and by $\ZZ_n(M,\Z_2)$ the space of mod-2 flat $n$-boundaries, which is the subspace of those $T = \partial U\in\I_n(M,\Z_2)$ for some $U\in \I_{n+1}(M,\Z_2)$.
We write $\mathcal{V}_n(M)$ for the space of $n$-dimensional rectifiable varifolds in $M$ with the varifold topology induced by the $\FF$-metric.
For any $T\in\I_n(M,\Z_2)$, we let $|T|\in\mathcal{V}_n(M)$ be its induced varifold, and consider the $\FF$-metric in $\ZZ_n(M,\Z_2)$ given by $\FF(S,T) = \Flat(S,T) + \FF(|S|,|T|)$ for $S,T\in\ZZ_n(M,\Z_2)$.
The space $\ZZ_n(M,\FF,\Z_2)$ denotes the set of mod-2 flat $n$-boundaries with the topology induced by the $\FF$-metric.
The mass of a chain $T$ is $\Mass(T)$ and the mass of a varifold $V$ is $||V||(M)$.

The Almgren isomorphism states that $\ZZ_n(M,\Z_2)$ is weakly homotopic equivalent to $\R\Proj^\infty$.
Therefore, 
\[ H^*(\ZZ_n(M,\Z_2);\Z_2) = \Z_2[\overline{\lambda}] \]
where $\overline{\lambda}$ is the nontrivial element of $H^1(\ZZ_n(M,\Z_2);\Z_2)$.

A $k$-sweepout of $M$ is a continuous map $\Phi:X\to \ZZ_n(M,\FF,\Z_2)$, with domain $X = \textnormal{dmn}(\Phi)$ a finite cubical complex, whose pullback 
\[ \Phi^*\left(\overline{\lambda}^k\right)\neq 0 \]
in $H^k(X;\Z_2)$.
We let $\Pcal_k$ be the set of all $k$-sweepouts, and define the $k$-width of $(M,g)$ as
\[ \omega_k(M,g) := \inf_{\Phi\in \Pcal_k} \max_{x\in\textnormal{dmn}(\Phi)}\Mass(\Phi(x)). \]
The sequence $\{\omega_k(M,g)\}_{k=1}^\infty$ is called the volume spectrum of $(M,g)$, and satisfies the Weyl law \cite{LMN}
\[ \lim_{k\to\infty} \omega_k(M,g) k^{-\frac{1}{n+1}} = a(n)vol(M,g)^{\frac{n}{n+1}}. \]
Here, $a(n)$ is a dimensional constant.

An optimal sequence for $\omega_k(M,g)$ is a sequence of sweepouts $\{\Phi_i\}_i\subset \Pcal_k$ such that
\[ \lim_{i\to\infty} \max_{x\in\textnormal{dmn}(\Phi_i)} \Mass(\Phi_i(x)) = \omega_k(M,g). \]
For such a sequence $\{\Phi_i\}$, we define its image set $\Lambda(\{\Phi_i\})$ as the set of varifolds $V\in\mathcal{V}_n(M)$ such that, for some subsequence $\{j\}\subset\{i\}$ and $x_j\in\textnormal{dmn}(\Phi_j)$,
\[ \lim_{j\to\infty}\FF(|\Phi_j(x_j)|,V) = 0. \]
The critical set of $\{\Phi_i\}$ is the subset $C(\{\Phi_i\})$ of those $V\in \Lambda(\{\Phi_i\})$ with mass $||V||(M) = \omega_k(M,g)$.
For $n+1 = 2$, the Almgren-Pitts min-max theory states that there is a stationary geodesic network with integer multiplicities $V\in C(\{\Phi_i\})$.
If $n+1\geq 3$, the Almgren-Pitts min-max theory says that there is some $V\in C(\{\Phi_i\})$ written as the sum of disjoint closed minimal hypersurfaces smoothly embedded outside a set of codimension seven and with integer multiplicities.
We refer to \cite{Pitts} and \cite{MN-Ricci} for these results.

\subsection{Sweepouts by projective planes in $\R\Proj^3$}
Let $\Pcal$ be the space of all smoothly embedded real projective planes in $\R\Proj^3$ with the $C^\infty$ topology.
By \cite{BK1} and \cite{BK2} (see also \cite{KetLio}), $\Pcal$ deformation retracts to $\R\Proj^3$.
Hence,
\[ H^*(\Pcal;\Z_2) = \Z_2[\lambda]/\langle\lambda^4\rangle, \]
where $\lambda\in H^1(\Pcal;\Z_2)$ is the nontrivial element.

A $k$-sweepout of $\R\Proj^3$ by projective planes is a continuous map $\phi:X\to \Pcal$, where $X = \textnormal{dmn}(\phi)$ is a finite cubical complex, such that $\phi^*(\lambda^k)\neq 0$ in $H^k(X;\Z_2)$.
Sweepouts by projective planes only exist for $k\in\{1,2,3\}$.
The set of $k$-sweepouts by projective planes is denoted as $\Pcal'_k$.

Let $g$ be a metric on $\R\Proj^3$.
As in the Almgren-Pitts theory, we define the projective $k$-width of $(\R\Proj^3,g)$ as
\[ \sigma_k(\R\Proj^3,g) := \inf_{\phi\in\Pcal'_k} \max_{x\in \textnormal{dmn}(\phi)}area_g(\phi(x)). \]
We also set the projective $0$-width
\[ \sigma_0(\R\Proj^3,g) := \inf_{\Sigma\in\Pcal} area_g(\Sigma), \]
and note that $\sigma_0(\R\Proj^3,g)>0$ by the results in \cite{BBN}.
As in the Almgren-Pitts theory, a sequence of $k$-sweepouts $\{\phi_i\}\subset\Pcal'_k$ is optimal for $\sigma_k(\R\Proj^3,g)$ if
\[ \sup_{x\in\textnormal{dmn}(\phi_i)}area_g(\phi(x))\to \sigma_k(\R\Proj^3,g) \]
when $k\to\infty$.

We only defined sweepouts in the Almgren-Pitts setting for boundaries.
It is not hard to extend the definition to general modulo two flat cycles, and obtain the definition of sweepouts by projective planes as a special case.
Instead of doing so, we relate the two definitions as follows.
The pre-image of any $\Sigma\in\Pcal$ under the canonical projection $\pi:\Sph^3\to\R\Proj^3$ is an embedded sphere, and we think of it as a mod-2 flat boundary.
With this notation, one can check that a continuous map $\phi:X\to\Pcal$ is a $k$-sweepout by projective planes if and only if the induced map $\Phi:x\in X\mapsto \pi^{-1}(\phi(x))\in \ZZ_2(\Sph^3,\Z_2)$ is a $k$-sweepout.

By the arguments in \cite{BBN}, $\sigma_0(\R\Proj^3,g)$ is the area of some projective plane $\Sigma\in\Pcal$.
This follows from the results in \cite{MSY}.
On the other hand, the Simon-Smith min-max theory (see \cite{ColdLellis}, \cite{LellisPella} and \cite{Ket}) tells that each projective $k$-width is realized as a sum
\[ \sigma_k(\R\Proj^3,g) = \sum_{i=1}^{n_k} m^{(k)}_iarea\left( \Sigma^{(k)}_i\right), \]
where $\{\Sigma^{(k)}_1,...,\Sigma^{(k)}_{n_k}\}$ is a collection of disjoint smoothly embedded minimal surfaces diffeomorphic to the sphere or to the real projective plane, and $m^{(k)}_i$ are integer multiplicities.

We now turn to the relation between projective widths and surface Zoll metrics.
First, we formalize the definition of a hypersurface Zoll manifold already mentioned in the introduction.

\begin{definition}
    A \textit{hypersurface Zoll family} (or a \textit{Zoll family of hypersurfaces}) in a Riemannian manifold $(M,g)$ is an unoriented transitive family $\Zoll$ of closed embedded minimal hypersurfaces.
    We call the triple $(M,g,\Zoll)$ a \textit{hypersurface Zoll manifold} and $g$, a hypersurface Zoll metric.
    When $M$ has dimension three, we call $\Zoll$ a surface Zoll family, $g$ a surface Zoll metric and $(M,g,\Zoll)$ a surface Zoll manifold.
    \label{def: hypersurface Zoll manifolds}
\end{definition}

As a consequence of Theorems A and B, an unoriented transitive family of closed two-sided embedded hypersurfaces $\Zoll=\{\Sigma_\sigma:\sigma\in P\}$ in an $(n+1)$-manifold $M$ gives rise to an $(n+1)$-sweepout by $n$-spheres $\Phi:\sigma\in P\mapsto\Sigma_\sigma\in\ZZ_n(M,\Z_2)$.
Indeed, for a fixed $\Sigma\in\Zoll$, we can rotate $\Sigma$ as in the proof of Theorem \ref{thm: classification of transitive families of two-sided hypersurfaces} and obtain a path $t\in[0,1]\mapsto\Sigma_t\in\Zoll$ such that $\Sigma=\Sigma_0 = \Sigma_1$, $\Sigma_t = \partial \overline{U_t}$ and $\overline{U_1} = M\setminus U_0$.
This shows that $\Phi^*(\overline{\lambda})\neq 0$ in $H^1(P;\Z_2)$.
Therefore, $\Phi^*(\overline{\lambda}^{n+1}) \neq 0$, since $P$ is homeomorphic to $\R\Proj^{n+1}$.

If the hypersurfaces in $\Zoll$ are one-sided, then $M\approx\R\Proj^n$ and $\sigma\mapsto\Sigma_\sigma$ is an $(n+1)$-sweepout of $M$ by projective hyperplanes in the sense that, if we pass to the universal cover $\pi:\widetilde{M}\approx\Sph^n\to M$, $\Phi:\sigma\in P\mapsto\pi^{-1}(\Sigma_\sigma)\in\ZZ_n(\widetilde{M},\Z_2)$ is an $(n+1)$-sweepout by $n$-spheres.

When $\Zoll$ is a surface Zoll family in $(M^3,g)$, Theorems A and B and the G\'alvez-Mira uniqueness theorem allow us to get a refined geometric information.
The case of $M\approx\Sph^3$ was done in \cite{AMN2}, and we state below.

\begin{proposition}[Proposition 5.1 of \cite{AMN2}]
    If $\Zoll=\{\widetilde{\Sigma}_\sigma:\sigma\in P\}$ is a surface Zoll family in $(\Sph^3,\widetilde{g})$, then $\widetilde{\Sigma}_\sigma$ has Morse index one and nullity equal to three for every $\sigma\in P$.
    Moreover, every immersed minimal 2-sphere in $(\Sph^3,g)$ is embedded and belongs to $\Zoll$.
    \label{prop: prop 5.1 of AMN2}
\end{proposition}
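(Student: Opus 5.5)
The statement has two parts: each $\widetilde{\Sigma}_\sigma$ has index one and nullity three, and every immersed minimal $2$-sphere lies in $\Zoll$. I would prove the index and nullity claims first and then obtain uniqueness from the G\'alvez--Mira theorem \cite{GM}.

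\emph{Nullity.} Fix $\sigma\in P$ and a unit normal $N$ along $\widetilde{\Sigma}_\sigma$. Every element of $\Zoll$ is minimal, so differentiating along a curve in $P$ through $\sigma$ shows that each infinitesimal variation $\phi_\zeta$, $\zeta\in T_\sigma P$, is a Jacobi field: $L\phi_\zeta=0$, where $L=\Delta+|A|^2+\mathrm{Ric}(N,N)$. The map $\zeta\mapsto\phi_\zeta$ is injective by Proposition \ref{prop: Zoll dimension reduction}, so the Jacobi fields coming from $\Zoll$ span a space of dimension $n+1=3$ (here $n=2$). For the reverse inequality, take any Jacobi field $u$ and a point $x\in\widetilde{\Sigma}_\sigma$. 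By Proposition \ref{prop: Zoll dimension reduction} there is $\zeta$ with $\phi_\zeta(x)=u(x)$ and $\nabla\phi_\zeta(x)=\nabla u(x)$. Then $w=u-\phi_\zeta$ is a Jacobi field vanishing to first order at $x$. If $w\neq 0$, then by Cheng's local structure of solutions of $Lw=0$ (the Bers/Hartman--Wintner theory in dimension two), its nodal set near $x$ is a union of at least two curves crossing at $x$. Choosing $x$ suitably should force a contradiction. The cleaner route is to argue at the level of nodal sets: the nodal curves of the $\phi_\zeta$ form a transitive family of circles through every point and direction (the Corollary after Proposition \ref{prop: Sigma is diffeomorphic to S^n through psi_sigma}). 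I would compare this with the nodal structure of $w+\phi_\xi$ for suitable $\xi$ and use Courant-type counting for the second eigenfunction to rule out an extra Jacobi field.

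\emph{Index.} I expect this step to be the main obstacle, and I would use min-max. The map $\sigma\mapsto\widetilde{\Sigma}_\sigma$ is a $3$-sweepout, by the discussion preceding the proposition. Each $\phi_\zeta$ has zero as a regular value and its nodal set is a single circle (Proposition \ref{prop: Sigma is diffeomorphic to S^n through psi_sigma}), so $\phi_\zeta$ has exactly two nodal domains. Since $\phi_\zeta$ lies in the $0$-eigenspace, Courant's theorem gives that $0$ is at most the second eigenvalue of $L$. It cannot be the first eigenvalue, because first eigenfunctions have constant sign. Hence $0=\lambda_2$ and $\mathrm{index}=1$.

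\emph{Uniqueness.} Any immersed minimal $2$-sphere $S$ has a point $x$ with tangent plane $\Pi=T_xS$. Let $\widetilde{\Sigma}$ be the element of $\Zoll$ through $(x,\Pi)$. G\'alvez--Mira's uniqueness theorem for elliptic Weingarten-type sphere problems with a transitive family, which applies here since the minimal surface equation is elliptic and $\Zoll$ is a transitive family of spheres, gives $S=\widetilde{\Sigma}$ as immersed spheres. In particular $S$ is embedded and belongs to $\Zoll$.
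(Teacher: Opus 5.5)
\textbf{There is a genuine gap: the step giving index at most one does not work, and the nullity bound depends on it.} Your index step uses Courant's theorem in the wrong direction. Courant bounds the number of nodal domains from above: an eigenfunction whose eigenvalue first occurs as $\lambda_k$ has at most $k$ nodal domains. Knowing that each $\phi_\zeta$ has exactly two nodal domains therefore only gives $0\neq\lambda_1$, i.e.\ index $\geq 1$, which you already had from the sign change. It gives no upper bound on the number of eigenvalues below $0$. The converse you need is false even on the round sphere. For $L=\Delta+\ell(\ell+1)$ with $\ell\geq 3$ odd, Lewy's spherical harmonics of degree $\ell$ are Jacobi fields with exactly two nodal domains, yet the index is $\ell^2$.

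Your nullity upper bound inherits the same gap. A Jacobi field $w=u-\phi_\zeta$ vanishing to first order at one point is not contradictory by itself. The Courant-type counting you propose only applies if $0=\lambda_2$, which is exactly the unproved index claim. Once index one is known, nullity three is immediate in either of two ways. One is Cheng's bound \cite{Cheng} of $3$ on the multiplicity of $\lambda_2$ on a sphere. The other is your $1$-jet subtraction: a second eigenfunction vanishing to first order would have a nodal crossing and hence at least three nodal domains. This is the order of argument in Proposition~\ref{prop: nullity upper bound}. Your lower bounds (nullity $\geq 3$, index $\geq 1$) and the uniqueness step via \cite{GM} are fine.

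What is missing is a global, variational input that bounds the index from above; nodal information about the $\phi_\zeta$ alone does not supply it. The paper does not reprove this statement (it quotes \cite{AMN2}), but here is one way to close the gap.
\begin{enumerate}
\item Do the uniqueness step first, since \cite{GM} uses neither index nor nullity. Every minimal sphere then lies in $\Zoll$. All members have the same area $A$, because $P$ is connected and each member is minimal.
\item Show that not every member can have index $\geq 2$, via min-max for sweepouts by spheres. The rotation loop $\Sigma_t$ from the proof of Theorem~\ref{thm: classification of transitive families of two-sided hypersurfaces} sweeps out $\Sph^3$ with all areas equal to $A$. Any min-max limit is a sum of minimal spheres, hence of members of $\Zoll$, each of area $A$; so $A$ is the width. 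If every $\Sigma_t$ had index $\geq 2$, you could push the loop along a continuously chosen negative direction and get below $A$, a contradiction. Such a direction exists because a bundle of rank at least two over a circle has a nowhere-vanishing section.
\item Propagate index one to all of $P$. The set of $\sigma\in P$ where $\widetilde{\Sigma}_\sigma$ has index one is open and closed. This holds because index is lower semicontinuous, index plus nullity is upper semicontinuous, index $\geq 1$ and nullity $\geq 3$ everywhere, and index one forces nullity three by Cheng's bound.
\end{enumerate}
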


We point out how to modify this in the case of $\R\Proj^3$.

\begin{proposition}
    If $\{\Sigma_\sigma:\sigma\in P\}$ is a surface Zoll family in $(\R\Proj^3,g)$, then each $\Sigma_\sigma$ is stable and has nullity three.
    In fact, each $\Sigma_\sigma$ minimizes area over the set $\Pcal$, and every immersed minimal 2-sphere in $(\R\Proj^3,g)$ is a double covers of an element of the surface Zoll family.
    As a consequence, every immersed minimal projective plane $\Sigma\subset \R\Proj^3$ is embedded and belongs to $\{\Sigma_\sigma\}$.
    \label{prop: modified prop 5.1 of AMN2 for RP3}
\end{proposition}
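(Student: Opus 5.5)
The plan is to lift everything to the universal cover $\pi:\Sph^3\to\R\Proj^3$ with $\widetilde g=\pi^*g$ and invoke Proposition \ref{prop: prop 5.1 of AMN2}. By Theorem \ref{thm: classification of transitive families of one-sided hypersurfaces}, each $\Sigma_\sigma$ is a one-sided projective plane. As in the proof of that theorem, $\widetilde\Zoll=\{\widetilde\Sigma_\sigma=\pi^{-1}(\Sigma_\sigma):\sigma\in P\}$ is an unoriented transitive family of two-sided embedded minimal $2$-spheres in $(\Sph^3,\widetilde g)$, i.e.\ a surface Zoll family. Proposition \ref{prop: prop 5.1 of AMN2} then says that each $\widetilde\Sigma_\sigma$ has index one and nullity three, and that every immersed minimal $2$-sphere in $(\Sph^3,\widetilde g)$ is embedded and lies in $\widetilde\Zoll$.

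\textbf{Step 1: stability and nullity.} Let $\tau$ be the deck involution, with unit normal $\widetilde N$ on $\widetilde\Sigma_\sigma$. Since $\Sigma_\sigma$ is one-sided, $\tau_*\widetilde N=-\widetilde N\circ\tau$. Hence normal variations of $\Sigma_\sigma$ correspond to $\tau$-odd functions $u\circ\tau=-u$, and the Jacobi operator commutes with $u\mapsto -u\circ\tau$. The first eigenfunction of $\widetilde\Sigma_\sigma$ is positive, so it is $\tau$-even and does not descend. Since the index of $\widetilde\Sigma_\sigma$ is exactly one, the odd sector has no negative directions, so $\Sigma_\sigma$ is stable.

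For the nullity, the $3$-dimensional space $\{\phi_\zeta:\zeta\in T_\sigma P\}$ consists of Jacobi fields, since the family consists of minimal surfaces. Writing $\overline\zeta=\phi_\zeta\widetilde N$ on the cover, these functions are automatically $\tau$-odd because they descend from sections of $N\Sigma_\sigma$. The whole $3$-dimensional kernel upstairs therefore equals them and lies in the odd sector, so $\Sigma_\sigma$ has nullity three.

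\textbf{Step 2: minimal spheres.} Let $f:\Sph^2\to\R\Proj^3$ be an immersed minimal sphere. It lifts to $\widetilde f:\Sph^2\to\Sph^3$ because $\Sph^2$ is simply connected, and $\widetilde f$ is embedded and equals some $\widetilde\Sigma_\sigma$. So $f=\pi\circ\widetilde f$ maps onto $\Sigma_\sigma$ as the double cover $\widetilde\Sigma_\sigma\to\Sigma_\sigma$.

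For an immersed minimal $\R\Proj^2$, the map $\Sigma\to\R\Proj^3$ induces an injection on $\pi_1$: otherwise it would lift to $\Sph^3$, and its sphere double cover would give two distinct lifts, contradicting embeddedness and membership in $\widetilde\Zoll$. I expect the orientation/sidedness bookkeeping here to be the delicate point. Composing with the orientation double cover $\Sph^2\to\Sigma$ gives an immersed minimal sphere, which by the previous paragraph double covers some $\Sigma_\sigma$. Comparing degrees forces $\Sigma\to\Sigma_\sigma$ to be a diffeomorphism, so $\Sigma$ is embedded and equal to $\Sigma_\sigma$.

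\textbf{Step 3: area minimization.} This is the main obstacle. By \cite{BBN} and \cite{MSY}, $\sigma_0(\R\Proj^3,g)$ is attained by a smoothly embedded minimal projective plane. By Step 2 this plane is some $\Sigma_{\sigma_0}$. All $\Sigma_\sigma$ have the same area, since $P$ is connected and the family consists of minimal surfaces, so $\sigma\mapsto area(\Sigma_\sigma)$ has zero derivative. Hence every $\Sigma_\sigma$ minimizes area in $\Pcal$.

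The point I would check carefully is that the minimizer from \cite{BBN}/\cite{MSY} is an honest embedded projective plane rather than a degenerate limit, such as a doubled sphere. I would handle this via the positivity $\sigma_0>0$ together with the isotopy-class minimization of \cite{MSY}.
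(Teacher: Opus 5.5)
Your proposal is correct and follows the paper's own route: lift to $(\Sph^3,\widetilde g)$, use Proposition \ref{prop: prop 5.1 of AMN2} (equivalently \cite{GM}) for the nullity and for immersed minimal spheres, and then combine the embedded minimizer from \cite{BBN}, which the paper also simply cites, with constancy of area along the family. The differences are only in detail: you get stability directly from index one upstairs via the $\tau$-parity splitting, whereas the paper lets it follow from area minimization over $\Pcal$. Also, your $\pi_1$-injectivity step for immersed projective planes is redundant, since counting sheets of the composite sphere immersion already forces $\Sigma\to\Sigma_\sigma$ to be a diffeomorphism.
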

\begin{proof}
    By Theorem A, each $\Sigma_\sigma$ is an embedded projective plane.
    Passing to the Zoll family of 2-spheres $\{\widetilde{\Sigma}_\sigma = \pi^{-1}(\Sigma_\sigma):\sigma\in P\}$ in the Riemannian covering $\pi:(\Sph^3,\widetilde{g})\to(\R\Proj^3,g)$, and using Proposition \ref{prop: prop 5.1 of AMN2}, we see that each $\Sigma_\sigma$ has nullity three.
    Also, by the uniqueness result of \cite{GM}, every immersed $\widetilde{g}$-minimal sphere $\widetilde{\Sigma}\subset \Sph^3$ belongs to the Zoll family.
    When we return to $(\R\Proj^3,g)$, we see that if $i:\Sph^2\looparrowright \R\Proj^3$ is a $g$-minimal immersion, then any lift $i':\Sph^2\to \Sph^3$ of $i$ is a $\widetilde{g}$-minimal embedding and its image is an element of $\{\widetilde{\Sigma}_\sigma:\sigma\in P\}$.
    Therefore, the image of $\Sigma = i(\Sph^2)\subset\R\Proj^3$ is an embedded minimal projective plane that belongs to the family $\{\Sigma_\sigma:\sigma\in P\}$.
    Moreover, $i:\Sph^2\to\Sigma$ is a double cover.

    Lastly, by the results  of \cite{BBN}, $\sigma_0(\R\Proj^3,g)$ is the area of some embedded stable minimal projective plane $\Sigma$ in $(\R\Proj^3,g)$, which belongs to the Zoll family by the last paragraph.
    Since $\sigma\in P\mapsto \Sigma_\sigma\in\Pcal$ is a smooth family of minimal projective planes, the area functional is constant along the Zoll family.
    Thus $area(\Sigma_\sigma) = \sigma_0(\R\Proj^3,g)$ for every $\sigma\in P$.
\end{proof}

\begin{theorem}
    The projective widths of a surface Zoll metric on $\R\Proj^3$ are all equal.
    \label{thr: easy implication}
\end{theorem}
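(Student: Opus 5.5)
We prove the chain $\sigma_0\le\sigma_1\le\sigma_2\le\sigma_3\le\sigma_0$ for a surface Zoll metric $g$ on $\R\Proj^3$ with family $\Zoll=\{\Sigma_\sigma:\sigma\in P\}$.

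\emph{Lower bound.} Every element of $\Pcal$ has area at least $\sigma_0(\R\Proj^3,g)$, by definition. Hence for any $k\in\{1,2,3\}$ and any $\phi\in\Pcal'_k$ we have $\max_{x}area_g(\phi(x))\ge\sigma_0$. Taking the infimum over $\phi$ gives $\sigma_0\le\sigma_k$.

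\emph{Monotonicity.} If $\phi$ is a $3$-sweepout, then $\phi^*(\lambda^3)=\phi^*(\lambda)^3\neq0$, so $\phi^*(\lambda^2)\neq0$ and $\phi^*(\lambda)\neq0$. Thus $\Pcal'_3\subset\Pcal'_2\subset\Pcal'_1$, which yields $\sigma_1\le\sigma_2\le\sigma_3$.

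\emph{Upper bound.} It remains to show $\sigma_3\le\sigma_0$, and for this we exhibit one $3$-sweepout all of whose members have area $\sigma_0$. The natural candidate is the Zoll family itself, $\phi:\sigma\in P\mapsto\Sigma_\sigma\in\Pcal$. By Proposition~\ref{prop: modified prop 5.1 of AMN2 for RP3}, every $\Sigma_\sigma$ has area exactly $\sigma_0(\R\Proj^3,g)$, so $\max area_g(\phi)=\sigma_0$.

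Two technical points remain. First, $P$ must be made into a finite cubical complex. By Corollary~\ref{cor: Theorem B for M=RP^(n+1)}, $P$ is diffeomorphic to $\R\Proj^3$, so we may triangulate it and then cubulate. Second, $\phi$ must be continuous into $\Pcal$ with the $C^\infty$ topology. This holds because the Legendrian lifts form a locally trivial smooth foliation, by item (iii) of Definition~\ref{def: transitive families}.

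\emph{Main obstacle.} The substantive step is checking $\phi^*(\lambda^3)\neq0$ in $H^3(P;\Z_2)$. I would use the equivalence stated above: $\phi$ is a $k$-sweepout by projective planes if and only if $\Phi:\sigma\mapsto\pi^{-1}(\Sigma_\sigma)\in\ZZ_2(\Sph^3,\Z_2)$ is a $k$-sweepout. The lifted family $\{\pi^{-1}(\Sigma_\sigma)\}$ is a transitive family of two-sided spheres in $\Sph^3$ parametrized by $P$. By the discussion following Definition~\ref{def: hypersurface Zoll manifolds}, the rotation loop $t\mapsto\Sigma_t$ from the proof of Theorem~\ref{thm: classification of transitive families of two-sided hypersurfaces} swaps the two complementary regions. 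Hence this loop is detected by $\overline\lambda$, so $\Phi^*\overline\lambda\neq0$. Since $P\approx\R\Proj^3$, the cohomology ring $H^*(P;\Z_2)$ is $\Z_2[a]/(a^4)$ with $a$ the nonzero class in degree one, so $\Phi^*\overline\lambda^3=a^3\neq0$.

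Alternatively, one can argue directly in $\Pcal$. The map $\phi$ sends the generator of $\pi_1(P)$, realized by the rotation loop, to a nontrivial loop in $\Pcal\simeq\R\Proj^3$, because that loop lifts to a loop in $\Sph^3$ that exchanges the two sides. Therefore $\phi^*\lambda=a$, and $\phi^*\lambda^3=a^3\neq0$.

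Combining the three parts gives $\sigma_0\le\sigma_1\le\sigma_2\le\sigma_3\le\sigma_0$, so all four projective widths are equal.
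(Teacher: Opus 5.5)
Your proof is correct and follows the paper's argument. The Zoll family $\sigma\mapsto\Sigma_\sigma$, parametrized by $P\approx\R\Proj^3$, is a $3$-sweepout, detected through the side-swapping rotation loop of the lifted spheres in $\Sph^3$, and all of its members have area $\sigma_0$; this closes the chain $\sigma_0\le\sigma_1\le\sigma_2\le\sigma_3\le\sigma_0$, with you spelling out the lower-bound and monotonicity steps that the paper leaves implicit.
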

\begin{proof}
    Let $g$ be a surface Zoll metric on $\R\Proj^3$ and write $\{\Sigma_\sigma:\sigma\in P\}$ for its Zoll family.
    By Theorem A, each surface $\Sigma_\sigma$ is a minimal projective plane.
    By Theorem B, by the discussion in the two paragraphs following Definition \ref{def: hypersurface Zoll manifolds}, and by Proposition \ref{prop: modified prop 5.1 of AMN2 for RP3}, the map $\sigma\in P\mapsto \Sigma_\sigma\in\Pcal$ is a 3-sweepout such that $\max_{\sigma\in P}area(\Sigma_\sigma) = \sigma_0(\R\Proj^3,g)$.
    Since 
    \[\sigma_0(\R\Proj^3,g)\leq\sigma_1(\R\Proj^3,g)\leq\sigma_2(\R\Proj^3,g)\leq\sigma_3(\R\Proj^3,g)\leq \max_{\sigma\in P}area(\Sigma_\sigma),\] we conclude that 
    \[ \sigma_0(\R\Proj^3,g) = \sigma_1(\R\Proj^3,g) = \sigma_2(\R\Proj^3,g) = \sigma_3(\R\Proj^3,g). \]
\end{proof}


\section{Metric on $\R\Proj^3$ with equal projective widths}

Recall that Theorem C states that $(\R\Proj^3,g)$ is surface Zoll if and only if it has equal projective widths.
Theorem \ref{thr: easy implication} shows one direction.
In this section, we prove the implication that remains to be proven in order to establish Theorem C.

\subsection{Uniform estimates}
Let $g$ be a Riemannian metric on $\R\Proj^3$ for which
\[ \sigma_0(\R\Proj^3,g) = \sigma_3(\R\Proj^3,g)=:\sigma_0, \]
and take a sequence $\{\phi_i\}_{i=1}^\infty\subset\Pcal'_3$ of $3$-sweepouts that is optimal for $\sigma_0$, in the sense that
\[ \lim_{i\to\infty} \max_{x\in X_i} area(\phi_i(x))=\sigma_0, \]
where $X_i = \text{dmn}(\phi_i)$.
Since our goal is to prove that $g$ is surface Zoll, we need a candidate for the Zoll family in $(\R\Proj^3,g)$ | and Proposition \ref{prop: modified prop 5.1 of AMN2 for RP3} suggests that we consider the set $\mathcal{V}$ of varifolds of the form $|\Sigma|$, where $\Sigma\subset \R\Proj^3$ is an embedded projective plane with $area(\Sigma)=\sigma_0$.

Recall that the results in \cite{BBN} tell us that $\mathcal{V} \neq \emptyset$.
We refine the argument and show that the area of a projective plane $\Sigma\in\Pcal$ gives information about the $\FF$-distance of $|\Sigma|$ to $\mathcal{V}$.

\begin{lemma}
    For every $\epsilon>0$, there is some $\delta>0$ such that
    \[ \FF(|\Sigma|,\mathcal{V})<\epsilon \]
    whenever $\Sigma\in\Pcal$ and $area(\Sigma)<\sigma_0 + \delta$.
    \label{lemma: uniform convergence of phi_i to Vcal in the varifold topology}
\end{lemma}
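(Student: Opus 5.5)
The proof will be by contradiction, combined with a compactness argument for minimizing sequences of projective planes, in the spirit of \cite{BBN} and \cite{MSY}. Suppose the statement fails. Then there are $\epsilon>0$ and a sequence $\Sigma_j\in\Pcal$ such that $area(\Sigma_j)\to\sigma_0$ and $\FF(|\Sigma_j|,\mathcal{V})\geq\epsilon$ for all $j$. Since $\sigma_0=\sigma_0(\R\Proj^3,g)$ is the infimum of area over $\Pcal$, the sequence $\{\Sigma_j\}$ is minimizing for area in the isotopy class (or at least the class $\Pcal$) of projective planes. We aim to show that a subsequence of $|\Sigma_j|$ converges in the $\FF$-metric to some element of $\mathcal{V}$, which contradicts the choice of the $\Sigma_j$.

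The first step is to replace $\{\Sigma_j\}$ by a minimizing sequence to which the Meeks--Simon--Yau regularity theory applies, while changing the varifolds by a negligible $\FF$-amount. The sequence is minimizing, but not necessarily minimizing with respect to isotopies in small balls. I will therefore use the standard replacement: run the Meeks--Simon--Yau $\gamma$-reduction and isotopy minimization on each $\Sigma_j$ (equivalently, work with $\pi^{-1}(\Sigma_j)\subset\Sph^3$ equivariantly). Each replacement only lowers area, and areas are already within $o(1)$ of the infimum, so the area lost tends to zero. This should give an $\FF$-small change. The justification is that an isotopy decreasing area by at most $\eta$ can be chosen to move the varifold little, as in the argument of \cite{MSY} that a minimizing sequence may be assumed locally almost minimizing in small annuli.

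The second step is to apply \cite{MSY} and the analysis of \cite{BBN} to this improved sequence. A subsequence converges as varifolds to $V=\sum m_i|\Gamma_i|$, where the $\Gamma_i$ are disjoint smooth embedded minimal surfaces. Their genus is controlled, so each is a sphere or a projective plane, and the total mass is $\sigma_0$. Next I need to rule out every possibility except $V=|\Gamma|$ with $\Gamma$ a single embedded projective plane of multiplicity one. Mod-2 homology helps here: the $\Sigma_j$ are nontrivial in $H_2(\R\Proj^3;\Z_2)$, and their flat limit is a cycle with $\sum m_i[\Gamma_i]\neq0$. So some $\Gamma_i$ is a projective plane with odd multiplicity, and its area is at least $\sigma_0$. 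Since the total mass is $\sigma_0$, that term alone uses all of it, forcing $m_i=1$ and no other components. Thus $V=|\Gamma|$ with $area(\Gamma)=\sigma_0$, so $V\in\mathcal{V}$.

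Convergence $\FF(|\Sigma_j|,V)\to0$ is exactly varifold convergence, so we reach the contradiction. I expect the main obstacle to be the first step: making sure the Meeks--Simon--Yau modifications do not destroy the $\FF$-distance lower bound, and that no mass is lost to degenerating necks or to spheres splitting off. Spheres splitting off with positive mass are excluded by the mass count above. The delicate part is the claim that a near-minimizing element is $\FF$-close to its reduced version. If that fails, I would instead argue directly: varifold limits of near-minimizers are stationary, because otherwise a flow would push area below $\sigma_0$. They are also stable and almost minimizing in annuli, because otherwise an isotopy would decrease area below $\sigma_0$. I would then invoke Pitts-type regularity for almost minimizing varifolds to obtain smoothness of the limit.
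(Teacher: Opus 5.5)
Your skeleton matches the paper's. You argue by contradiction, extract a Meeks--Simon--Yau varifold limit $V=\sum m_i|\Gamma_i|$ of the near-minimizing planes, find a projective-plane component $\Gamma$, and use $\|V\|(\R\Proj^3)=\sigma_0\le area(\Gamma)$ to force $V=|\Gamma|\in\mathcal{V}$. However, your Step 1 is unnecessary, and the principle you offer to justify it is false.

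All embedded projective planes lie in one isotopy class, and $area(\Sigma_j)\to\sigma_0=\inf_{\Pcal}area$. So $\{\Sigma_j\}$ is already a minimizing sequence for that class: $area(\phi(\Sigma_j))\geq area(\Sigma_j)-\epsilon_j$ for \emph{every} isotopy $\phi$, with $\epsilon_j\to0$. The Meeks--Simon--Yau theorem applies to such a sequence as it stands. It gives a subsequence \emph{of the original} $\Sigma_j$ converging as varifolds to $\sum m_i|\Gamma_i|$, with genus control. The $\gamma$-reductions and local replacements are internal to its proof, and nothing requires them to be $\FF$-close to the $\Sigma_j$. This is exactly how the paper proceeds.

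Your claim that an isotopy lowering area by at most $\eta$ moves the varifold little is wrong in general. For the round metric, the totally geodesic planes form a $3$-parameter family of minimizers. Rotating one into another costs no area but moves $|\Sigma|$ a definite $\FF$-distance. Your fallback is essentially the direct route. Note, though, that the relevant regularity theory is the isotopy-based one (Simon--Smith, Colding--De Lellis, built on Meeks--Simon--Yau), not Pitts' theory for mass-almost-minimizing cycles.

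Where you genuinely differ is in locating the nonorientable component. The paper argues that the Meeks--Simon--Yau neck surgeries always leave a nonorientable piece; you use mod-$2$ homology instead. Your route is cleaner, and it is really what makes the paper's sentence rigorous. In an orientable $3$-manifold, nonorientable means one-sided, which means nonzero in $H_2(\cdot;\Z_2)$, and this is what prevents the planes from collapsing with even multiplicity onto spheres.

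You should state it correctly, though. A flat limit $T$ of the $\Sigma_j$ satisfies $\|T\|\leq\|V\|$, so by the constancy theorem $T=\sum r_i\Gamma_i$ with $r_i\in\{0,1\}$. There is no reason for $r_i\equiv m_i$ mod $2$, and you do not need odd multiplicity, only that $r_i=1$ forces $m_i\geq1$. Since $T$ is homologous to $\Sigma_j\neq0$, some $\Gamma_i$ with $r_i=1$ is nonzero in $H_2(\R\Proj^3;\Z_2)$.

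That component is not a sphere. If $[S]\neq0$, then $\langle a^2,[S]\rangle=\langle a^3,[\R\Proj^3]\rangle\neq0$ for the generator $a\in H^1(\R\Proj^3;\Z_2)$, which is impossible since $a$ restricts to zero on a sphere. You still need the genus bound to rule out nonorientable components of higher genus, because only a projective plane is guaranteed to have area at least $\sigma_0$.
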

\begin{proof}
    Suppose that the assertion is not true for the sake of contradiction.
    Then there is some $\epsilon>0$ and a sequence of projective planes $\{\Sigma_i\}\subset \Pcal$ such that $area(\Sigma_i)\to\sigma_0$, but $\FF(|\Sigma_i|,\mathcal{V})\geq\epsilon$ for each $i\geq 1$.
    Since $\{\Sigma_i\}$ is a minimizing sequence for the isotopy class in the sense of \cite{MSY}, we apply the Meeks-Simon-Yau Theorem to get a subsequence $\{\Sigma_{i_j}\}\subset\{\Sigma_i\}$ that converges in varifold to 
    \[ V = m_1|\Gamma_1| + \cdots+m_k|\Gamma_k|, \]
    where $\{\Gamma_1,...,\Gamma_k\}$ is a collection of closed smooth embedded disjoint minimal surfaces.
    Moreover, $\Mass(V) \leq \sigma_0$ by semicontinuity of mass, and each $\Gamma_i$ is either a sphere or a projective plane.

    An important step in the proof that $|\Sigma_{i_j}|\to V$ is a surgery procedure that cuts thin necks of $\Sigma_{i_j}$, where singularities might form in the limit. 
    Whenever this surgery is applied on a nonorientable surface, the result is either a nonrientable surface or two disjoint surfaces, one of them being nonorientable.
    As a consequence, the nonorientability of each $\Sigma_i$ reflects in the limit $V$, for at least one $\Gamma_j$, say $\Gamma_1$, is nonorientable.
    Hence $\Gamma_1$ must be a projective plane with area
    \[ \sigma_0 = \sigma_0(\R\Proj^3,g)\leq area(\Gamma_1)\leq \Mass(V)\leq\sigma_0. \]
    This shows that $k=m_1=1$ and $V = |\Gamma_1|\in\mathcal{V}$.
    But since $|\Sigma_{i_j}|\to V$ as varifolds, the distances $\FF(|\Sigma_{i_j}|,\mathcal{V})\leq \FF(|\Sigma_{i_j}|,V)\to 0$, a contradiction.
\end{proof}

The consequence of Lemma \ref{lemma: uniform convergence of phi_i to Vcal in the varifold topology} is, in a sense, a uniform convergence of $\{\phi_i\}_i$ to $\mathcal{V}$ in the varifold topology.
For this, recall that, as in the Almgren-Pitts setting, the image set of $\{\phi_i\}$ is the set of varifolds $V$ that are the varifold-limit of some sequence $\{\phi_{i_j}(x_j)\}_j$, and we denote it by $\Lambda(\{\phi_i\})$.
The critical set $C(\{\phi_i\})$ is the subset of elements $V\in\Lambda(\{\phi_i\})$ with mass $\Mass(V) = \sigma_0$.

\begin{corollary}
    \[ \lim_{i\to\infty}\max_{x\in X_i} \FF(|\phi_i(x)|,\mathcal{V}) = 0. \]
    In particular, $C(\{\phi_i\})=\Lambda(\{\phi_i\})\subset \mathcal{V}$.
    \label{cor: Critical set = Image set < Vcal}
\end{corollary}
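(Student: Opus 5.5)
The plan is to obtain the uniform convergence directly from Lemma \ref{lemma: uniform convergence of phi_i to Vcal in the varifold topology}, and then read off the statement about the image and critical sets from the definitions.

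For the first assertion, fix $\epsilon>0$ and let $\delta>0$ be the constant given by Lemma \ref{lemma: uniform convergence of phi_i to Vcal in the varifold topology}. Since $\{\phi_i\}$ is optimal for $\sigma_0=\sigma_3(\R\Proj^3,g)$, there is $i_0$ such that $\max_{x\in X_i}area(\phi_i(x))<\sigma_0+\delta$ for all $i\geq i_0$. Each $\phi_i(x)$ lies in $\Pcal$, so the lemma applies pointwise and gives $\FF(|\phi_i(x)|,\mathcal{V})<\epsilon$ for every $x\in X_i$. The maximum is attained because $X_i$ is a finite cubical complex and $x\mapsto |\phi_i(x)|$ is continuous into varifolds, since $\phi_i$ is continuous in the smooth topology. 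Hence the maximum is at most $\epsilon$ for $i\geq i_0$, which proves the limit.

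For the second assertion, take $V\in\Lambda(\{\phi_i\})$, so that $|\phi_{i_j}(x_j)|\to V$ in the $\FF$-metric for some subsequence and some points $x_j\in X_{i_j}$. By the uniform convergence just proved, $\FF(|\phi_{i_j}(x_j)|,\mathcal{V})\to 0$, and the triangle inequality gives $\FF(V,\mathcal{V})=0$, so $V$ lies in the $\FF$-closure of $\mathcal{V}$. The only point that is not a direct citation is that $\mathcal{V}$ is closed in the varifold topology. I would prove this by taking $|\Sigma_k|\in\mathcal{V}$ converging to some $W$. Each $\Sigma_k$ is an area-minimizing embedded projective plane in $\Pcal$ with $area(\Sigma_k)=\sigma_0$, so $\{\Sigma_k\}$ is a minimizing sequence in the sense of \cite{MSY}. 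Rerunning the argument from the proof of Lemma \ref{lemma: uniform convergence of phi_i to Vcal in the varifold topology} (the Meeks--Simon--Yau compactness, with nonorientability surviving the neck surgery) shows that a subsequence converges to $|\Gamma|$ for some embedded projective plane $\Gamma$ of area $\sigma_0$, and therefore $W=|\Gamma|\in\mathcal{V}$. Alternatively, compactness of stable minimal surfaces with bounded area, via curvature estimates, gives smooth convergence directly. Either way $\Lambda(\{\phi_i\})\subset\mathcal{V}$.

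Finally, every element of $\mathcal{V}$ has mass $\sigma_0$, so every $V\in\Lambda(\{\phi_i\})$ satisfies $\|V\|(M)=\sigma_0$. This means $V\in C(\{\phi_i\})$, giving $C(\{\phi_i\})=\Lambda(\{\phi_i\})$. I expect the closedness of $\mathcal{V}$ to be the only point needing care, and even there the proof of the lemma already supplies the argument.
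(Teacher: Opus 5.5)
Your proof is correct and is essentially the paper's argument. The uniform convergence is the lemma combined with the optimality of $\{\phi_i\}$, and the inclusion $\Lambda(\{\phi_i\})\subset\mathcal{V}$ rests on the closedness (in fact compactness) of $\mathcal{V}$ in the varifold topology, which the paper also obtains by rerunning the Meeks--Simon--Yau argument from the lemma's proof.
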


A similar strategy used in the proof of Lemma \ref{lemma: uniform convergence of phi_i to Vcal in the varifold topology} shows the compactness of $\mathcal{V}$ in the varifold topology.
In fact, after fixing an integer $k\geq1$ and some $\alpha\in(0,1)$, Allard's theorem and the compactness of $\mathcal{V}$ imply that the set $\mathcal{Z} = \{\Sigma\in\Pcal:area(\Sigma)=\sigma_0\}$ is compact in the $C^{k,\alpha}$ topology.

It will be convenient to work with the set $\Pcal^{k,\alpha}$ of $C^{k,\alpha}$-embedded projetive planes in $(\R\Proj^3,g)$.
For a given $\Sigma\in\Pcal^{k,\alpha}$ and some $r>0$, we write $B^{k,\alpha}_r(\Sigma)$ for the set of those $\Sigma'\in\Pcal^{k,\alpha}$ that can be written as $\Sigma' = \{\exp_x(X_x): x\in \Sigma\}$ for some normal vector field $X$ along $\Sigma$ with norm $||X||_{k,\alpha}<r$.

We end this subsection with an improvement of the convergence in Corollary \ref{cor: Critical set = Image set < Vcal}.
In this version, we view $\Pcal$ as a subset of flat cycles modulo two, and associate to it both the flat and the $\FF$ topologies, generated respectively by the metrics $\mathcal{F}$ and $\FF$, where $\FF(\Sigma_1,\Sigma_2) = \mathcal{F}(\Sigma_1,\Sigma_2)+\FF(|\Sigma_1|,|\Sigma_2|)$.

\begin{proposition}
    Let $\mathcal{W}$ be a compact subset of $\mathcal{V}$, and denote by $\mathcal{K} = \{\Sigma\in\Pcal:|\Sigma|\in\mathcal{W}\}\subset \mathcal{Z}$.
    If $\Lambda(\{\phi_i\})\subset \mathcal{W}$, then
    \[ \lim_{i\to\infty}\max_{x\in X_i}\FF(\phi_i(x),\mathcal{K}) = 0. \]
    \label{prop: uniform convergence in Flat + F}
\end{proposition}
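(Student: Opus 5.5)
We argue by contradiction, reducing to a statement about a single sequence of projective planes. Suppose the conclusion fails. Then there are $\epsilon>0$, a subsequence (still indexed by $i$) and points $x_i\in X_i$ with $\FF(\phi_i(x_i),\mathcal{K})\geq\epsilon$ for all $i$. Set $\Sigma_i:=\phi_i(x_i)\in\Pcal$. By optimality, $\limsup_i area(\Sigma_i)\leq\sigma_0$. Since each $\Sigma_i$ is an embedded projective plane, $area(\Sigma_i)\geq\sigma_0(\R\Proj^3,g)=\sigma_0$, and hence $area(\Sigma_i)\to\sigma_0$. After passing to a further subsequence, $|\Sigma_i|\to V$ as varifolds, where $V\in\Lambda(\{\phi_i\})\subset\mathcal{W}$. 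So $V=|\Gamma|$ for some $\Gamma\in\mathcal{K}$, using Corollary \ref{cor: Critical set = Image set < Vcal} and the definition of $\mathcal{K}$. It remains to show $\Flat(\Sigma_i,\Gamma)\to0$. Then $\FF(\Sigma_i,\Gamma)\to0$, contradicting $\FF(\Sigma_i,\mathcal{K})\geq\epsilon$.

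The key step is upgrading varifold convergence to flat convergence of the mod-2 cycles. We use compactness of flat chains with bounded mass. The masses $\Mass(\Sigma_i)=area(\Sigma_i)$ are uniformly bounded, so a subsequence converges in the flat topology to some mod-2 flat cycle $T$. Lower semicontinuity of mass and the inequality $\|T\|\leq\|V\|$ give $\Mass(T)\leq\sigma_0$ and $\textnormal{spt}\,T\subset\Gamma$. By the constancy theorem for mod-2 cycles supported on the connected embedded surface $\Gamma$, $T$ is either $0$ or $\Gamma$ (as a mod-2 cycle, $[\Gamma]$).

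The main obstacle is excluding $T=0$. For this we use the nontrivial homology class of projective planes. Each $\Sigma_i$ represents the generator of $H_2(\R\Proj^3;\Z_2)$, because every embedded projective plane in $\R\Proj^3$ is one-sided and so is nontrivial in mod-2 homology. Flat limits preserve the homology class: close in flat norm means the difference is $R+\partial S$ with $\Mass(R)+\Mass(S)$ small, and for $R$ small enough this is homologically trivial by the isoperimetric/deformation theorem. Hence $T$ is nontrivial in $H_2(\R\Proj^3;\Z_2)$, so $T\neq0$ and $T=\Gamma$. If passing to homology classes of flat cycles feels delicate, an alternative is to lift to $\Sph^3$. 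There $\pi^{-1}(\Sigma_i)$ are boundaries of regions $U_i$, and we would show that the limit region is neither empty nor everything, using mass bounds on $U_i$ and the fact that $U_i$ and its antipodal image partition $\Sph^3$. This gives half the volume, which forces the flat limit of $\pi^{-1}(\Sigma_i)$ to be $\pi^{-1}(\Gamma)$.

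Since every subsequence has a further subsequence with $\Flat(\Sigma_i,\Gamma)\to0$ and $\FF(|\Sigma_i|,|\Gamma|)\to0$, we get $\FF(\Sigma_i,\Gamma)\to0$. This gives the desired contradiction.
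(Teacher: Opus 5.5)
Your proposal is correct, but it excludes the degenerate alternative $T=0$ by a different mechanism than the paper. Both arguments begin the same way: varifold closeness to $\mathcal{W}$, the bound $\|T\|\le\|V\|$, and the constancy theorem reduce everything to showing that a flat limit is not $0$.

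The paper handles this globally. Using $\FF$-compactness of $\mathcal{K}$, it obtains a uniform dichotomy: an element varifold-close to $\mathcal{K}$ is flat-close either to some $\Sigma\in\mathcal{K}$ or to $0$. It then propagates the second alternative over the whole domain $X$, using continuity of $\phi$ and the gap coming from $\sigma_0>0$. Finally it rules out ``$\phi(x)$ flat-close to $0$ for all $x$'' with the sweepout class. A loop detected by $\phi^*\lambda$ lifts to a path of domains $\Omega_t\subset\Sph^3$ with $\Omega_1=\Sph^3\setminus\overline{\Omega_0}$, which is incompatible with every $\Omega_t$ having small volume.

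You instead rule out $T=0$ pointwise. Every embedded $\R\Proj^2$ in $\R\Proj^3$ is one-sided, hence nonzero in $H_2(\R\Proj^3;\Z_2)$, and mod-2 cycles at small flat distance are homologous by the isoperimetric inequality. Equivalently, in the lift, the antipodal map swaps the two sides of $\pi^{-1}(\Sigma_i)$, so each side has exactly half the volume.

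What your route buys:
\begin{itemize}
\item It is shorter and never uses that $\phi_i$ is a $3$-sweepout.
\item It proves a stronger statement: any sequence of embedded projective planes with area tending to $\sigma_0$ and varifold limits in $\mathcal{W}$ converges in $\FF$ to $\mathcal{K}$.
\item Flat convergence in $\R\Proj^3$ pulls back to flat convergence of the lifts, since masses only double. So your homological argument alone covers either reading of the flat metric on $\Pcal$.
\end{itemize}

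What the paper's route buys: it is the argument inherited from the Ambrozio--Marques--Neves setting of spheres in $\Sph^3$. There a single embedded sphere can be varifold-close to a multiplicity-one minimal sphere while being flat-close to $0$; think of many thin pancakes joined by thin tubes. No pointwise argument is available in that setting, so the sweepout topology is genuinely needed.
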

\begin{proof}
    The same argument in Lemma \ref{lemma: uniform convergence of phi_i to Vcal in the varifold topology} implies that
    \[ \lim_{i\to\infty}\max_{x\in X_i}\FF(|\phi_i(x)|,\mathcal{W}) = 0. \]
    So we only need to prove $\lim_{i\to\infty}\max_{x\in X_i}\mathcal{F}(\phi_i(x),\mathcal{K}) = 0$.
    For this, note that if a sequence $\{\Sigma_i\}\subset \Pcal$ converges as varifolds to some $\Sigma \in \Pcal$ and in the flat metric to some flat cycle $T$, then either $T=0$ or $T=\Sigma$ by the Constancy Theorem.
    
    On the other hand, we know that $\mathcal{K}$ is compact in the $C^{k,\alpha}$ topology by compactness of $\mathcal{W}$ and Allard's Theorem | so it is also $\FF$-compact.
    This, together with the argument in the last paragraph, tell us that, for every $\epsilon>0$, there is a $\delta\in(0,\epsilon)$ such that, if $\Sigma\in \mathcal{K}$ and $\Sigma'\in\mathcal{P}$ satisfy $\FF(|\Sigma|,|\Sigma'|)<\delta$, then either $\mathcal{F}(\Sigma,\Sigma')<\epsilon$ or $\mathcal{F}(\Sigma',0)<\epsilon$.

    Now, if a 3-sweepout $\phi:X\to\Pcal^{k,\alpha}$ is $\epsilon$-close to $\mathcal{W}$ in $\FF$-metric, in the sense that $\FF(|\phi(x)|,\mathcal{W})<\epsilon$ for all $x\in X$, then either $\FF(\phi(X),\mathcal{K})<2\epsilon$ or there is some $x_0\in X$ and some $\Sigma\in\mathcal{K}$ such that $\FF(|\phi(x_0)|,|\Sigma|)<\epsilon$ and $\mathcal{F}(\phi(x_0),0)<\epsilon$.
    But since $\phi$ is $\FF$-continuous (it is continuous with respect to the $C^{k,\alpha}$-topology of $\Pcal^{k,\alpha}$), and since $area(\Sigma')=\sigma_0>0$ for all $\Sigma'\in\mathcal{K}$, we conclude that $\mathcal{F}(\phi(x),0)<\epsilon$ if we take $\epsilon<\sigma_0/2$.

    To see why such a 3-sweepout cannot exist for some $\epsilon$ sufficiently small, note that, if $x:[0,1]\to X$ is a non-contractible loop for which $\phi^*(\lambda)([x])\neq 0$, then $\phi\circ x$ determines a path $t\in [0,1]\mapsto \Omega_t\subset \Sph^3$ of open $C^{k,\alpha}$-domains such that (1) $t\mapsto vol_g(\Omega_t)$ is continuous, (2) $\phi(x(t)) = \pi(\partial\Omega_t)$ for all $t\in[0,1]$ and (3) $\Omega_1 = \Sph^3\setminus\overline{\Omega_0}$.
    For $\epsilon\in(0,vol_g(\Sph^3)/2)$ small, we can further take $\Omega_t$ to satisfy the upper bound $vol_g(\Omega_t)<\epsilon$, and from this we get the two inequalities $vol_g(\Omega_0)+vol_g(\Omega_1) < 2\epsilon<vol_g(\Sph^3)$.
    However, $vol_g(\Sph^3) = vol_g(\Omega_0)+vol_g(\Omega_1)$, and we have a contradiction.

    This completes the proof, because $\max_{x\in X_i}\FF(|\phi_i|,\mathcal{W})<\delta$ for every $i$ sufficiently large.
\end{proof}

\subsection{Existence of a 3-sweepout by minimal projective planes}

We start the second part of the proof with two general results that will give us the final tools for the last implication of Theorem C.

\begin{proposition}
    Suppose $g$ is any metric on $\R\Proj^3$, and let $\pi:(\Sph^3,\widetilde{g})\to (\R\Proj^3,g)$ be the double Riemannian covering.
    If $\Sigma\subset\R\Proj^3$ is an embedded minimal stable projective plane, then $\widetilde{\Sigma}:=\pi^{-1}(\Sigma)\subset \Sph^3$ is an embedded minimal sphere with Morse index at most one and nullity at most three.
    In particular, the nullity of $\Sigma$ is less than or equal to 3.
    \label{prop: nullity upper bound}
\end{proposition}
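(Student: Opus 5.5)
Set $\tau:\Sph^3\to\Sph^3$ to be the nontrivial deck transformation of $\pi$; it is a $\widetilde g$-isometry. Since $\Sigma$ is a one-sided projective plane, $\widetilde\Sigma=\pi^{-1}(\Sigma)$ is a connected embedded two-sided sphere and $\pi|_{\widetilde\Sigma}$ is its orientation double cover. Minimality is local, so $\widetilde\Sigma$ is minimal. Choose a unit normal $\widetilde N$ on $\widetilde\Sigma$. Because $\Sigma$ is one-sided, $\tau_*\widetilde N=-\widetilde N\circ\tau$ along $\widetilde\Sigma$. Consequently, normal variations of $\widetilde\Sigma$ are functions $u\in C^\infty(\widetilde\Sigma)$, and they split as $u=u_++u_-$ with $u_\pm\circ\tau=\pm u_\pm$. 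The Jacobi operator $L=\Delta+|A|^2+Ric(\widetilde N,\widetilde N)$ commutes with $\tau^*$, since $|A|^2$ and $Ric(\widetilde N,\widetilde N)$ are $\tau$-invariant; the sign change of $\widetilde N$ cancels in both. So $L$ preserves the splitting into even and odd functions.

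Next I would check that the odd functions are exactly the normal variations that descend to $\Sigma$. The vector field $u\widetilde N$ is $\tau$-invariant precisely when $u\circ\tau=-u$. Hence sections of $N\Sigma$ correspond to odd functions on $\widetilde\Sigma$, and the second variation of $\Sigma$ is half that of $\widetilde\Sigma$ on them. Stability of $\Sigma$ therefore says $Q(u,u)\ge0$ for all odd $u$, and the nullity of $\Sigma$ equals the dimension of the odd Jacobi fields. In particular $\mathrm{nullity}(\Sigma)\le\mathrm{nullity}(\widetilde\Sigma)$, so the final claim follows from the bound for $\widetilde\Sigma$.

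For the index, the first eigenfunction of $L$ is positive and simple, so it is even. Any negative direction beyond it comes from an eigenfunction orthogonal to it. Such a function would have to be odd, or else an even eigenfunction with a negative eigenvalue. I would exclude the even case by a nodal-domain argument. The lowest eigenvalue $\lambda_1^{\mathrm{odd}}$ of $L$ on odd functions is $\ge0$ by stability. An odd first eigenfunction $f$ vanishes on a $\tau$-invariant set, and its sign domains are interchanged by $\tau$. Restricting $L$ to a nodal domain $D$ of $f$ gives Dirichlet eigenvalue $\lambda_1^{\mathrm{odd}}$. Now take an even eigenfunction $h$ with negative eigenvalue, orthogonal to the first eigenfunction. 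It has at least two nodal domains, which are either swapped or fixed by $\tau$. I would try to construct from $h$ an odd test function with negative energy: restrict $h$ to one nodal domain $D'$ with $\tau(D')\cap D'=\emptyset$ and set $h\chi_{D'}-(h\chi_{D'})\circ\tau$. This works whenever some nodal domain is disjoint from its image, and it contradicts stability. \textbf{This case analysis is the main obstacle.} If every nodal domain is $\tau$-invariant, I would instead use the two-dimensional topology: on $\Sph^2$ with the free antipodal-type involution, a $\tau$-invariant domain contains a $\tau$-invariant loop, so two disjoint $\tau$-invariant domains cannot coexist by the Jordan curve theorem. This gives index$(\widetilde\Sigma)\le1$.

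For the nullity, I would use Cheng's bound on the multiplicity of eigenvalues of Schrödinger operators on the sphere, via local nodal structure (Courant). Take the zero eigenspace $K$ of $L$ on $\widetilde\Sigma\approx\Sph^2$. If $\dim K\ge4$, fix a point $x$ and impose vanishing of $u(x)$ and $\nabla u(x)$; this leaves a nonzero $u\in K$ whose nodal set has at least four arcs meeting at $x$, i.e.\ at least three nodal domains. I would combine this with the index bound. Zero being at most the second eigenvalue (index $\le1$) means zero eigenfunctions have at most two nodal domains by Courant, unless zero equals $\lambda_2$ with degenerate structure. A vanishing-to-second-order point yields at least three nodal domains on a sphere, which contradicts Courant for eigenvalue index $2$. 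Hence $\dim K\le3$, and therefore $\mathrm{nullity}(\Sigma)\le3$.
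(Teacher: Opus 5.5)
Your proof is correct. For the nullity bound it is the paper's argument. The paper simply cites Cheng's theorem that the second eigenvalue of a Schr\"odinger operator on $\Sph^2$ has multiplicity at most three. That is exactly the Courant-plus-local-nodal-structure count you sketch. Your caveat about ``degenerate structure'' is unnecessary: since $\lambda_1<\lambda_2$, Courant bounds the nodal domains of every function in the $\lambda_2$-eigenspace by two.

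For the index bound you take a different route. The paper works with the whole second eigenspace $V$ of $L_{\widetilde\Sigma}$, with no sign assumption.
\begin{itemize}
\item Via Cheng's local nodal structure, every second eigenfunction has a simple closed nodal curve and is determined by its $1$-jet at a point.
\item It then picks an eigenvector $u$ of the involution $u\mapsto u\circ\tau$ on $V$.
\item An orientation argument on the $\tau$-invariant nodal curve forces $u\circ\tau=-u$, so $u\widetilde N$ descends and stability gives $\lambda_2(\widetilde\Sigma)\ge 0$.
\end{itemize}
You instead argue against an even negative eigenfunction $h\perp\phi_1$. You antisymmetrize $h\chi_{D'}$ when $\tau$ swaps a nodal domain, and exclude the all-invariant case topologically.

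Your route is more elementary for this step. It needs neither the local structure theorem, nor Courant, nor a multiplicity bound. It also isolates the single topological input: $\tau$ cannot preserve two disjoint connected open subsets of $\widetilde\Sigma$. That same fact underlies the paper's orientation argument. The paper's route, in exchange, reuses the second-eigenfunction structure it needs anyway, both for the nullity bound and for the local injectivity of $T$ at the end of the proof of Theorem C.

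One step needs tightening. A loop $\gamma\ast\tau(\gamma)$ in a $\tau$-invariant domain need not be simple, so the Jordan curve theorem does not apply to it directly. There are two easy fixes:
\begin{itemize}
\item Extract a simple orientation-reversing loop in $D/\tau\subset\R\Proj^2$. Splitting a generic loop at a double point leaves a subloop that is still orientation-reversing. Its preimage is a simple $\tau$-invariant curve whose two complementary disks are swapped by $\tau$.
\item Alternatively, note that two orientation-reversing loops in $\R\Proj^2$ have odd mod $2$ intersection number. So they cannot lie in the disjoint open sets $D_1/\tau$ and $D_2/\tau$.
\end{itemize}
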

\begin{proof}
    We will prove that $\widetilde{\Sigma}$ has index at most one.
    By \cite{Cheng}, this will give us the desired bound $nul(\widetilde{\Sigma})\leq 3$.
    Since $nul(\Sigma)\leq nul(\widetilde{\Sigma})$, this also implies that the same upper bound on the nullity of $\Sigma$ holds.
    
    We must prove that the second eigenvalue of the Jacobi operator $L_{\widetilde{\Sigma}}$ is nonnegative.
    Indeed, it is known that the complement of the nodal set of a nontrivial second eigenfunction $u:\widetilde{\Sigma}\to\R$ of $L_{\widetilde{\Sigma}}$ consists of two components, one where $u>0$ and the other where $u<0$.
    By \cite{Cheng}, $\{u=0\}$ is connected and locally the zero set of the polynomial $z\in\C\mapsto Re(z^n)$ for some $n\geq 1$.
    If the nodal set had a singularity, it would divide $\widetilde{\Sigma}$ in more than two components by the Jordan Curve Theorem, which is not the case.
    
    Thus, $\{u=0\}$ is a simple closed curve for every second eigenfunction $u$.
    Moreover, if $u$ is a second eigenfunction, it is uniquely determined by both $u(x)$ and $Du(x)$ for any $x\in\widetilde{\Sigma}$, because the nodal set of an eigenfunction that vanishes to the first order at a point must have a singularity there.
    
    Since the antipodal map $i:x\in \Sph^3\mapsto-x\in\Sph^3$ is an isometry of $(\Sph^3,\widetilde{g})$ that fixes $\widetilde{\Sigma}$ and inverts a unit normal vector along $\widetilde{\Sigma}$, the function $v(x):=-u(-x)$ is also a second eigenfunction.
    Therefore, we proved that the subspace $V\subset W^{1,2}(\widetilde{\Sigma})$ of second eigenfunctions of $L_{\widetilde{\Sigma}}$ has dimension at most three and is equipped with a linear isometry $I:u\in V\mapsto u\circ i \in V$.

    We now claim that there is a nontrivial function $u\in V$ such that $I(u) = u\circ i = \pm u$.
    There are two case to investigate.
    If the dimension of $V$ is odd, this is true because every linear isometry of an odd dimensional vector space has some eigenvalue equal to $\pm1$.
    If the dimension of $V$ is even, then $V$ has dimension two, and every linear isometry of the plane is either a rotation centered at the origin or a reflection across a line of $V$ that passes through the origin.
    If $I$ is a reflection across a line $\ell\subset V$ (\textit{i.e.} $I\in O(2)\setminus SO(2)$), then $I(u) = u$ for every $u\in\ell$.
    If $I$ is a rotation (\textit{i.e.} $I\in SO(2)$), then $I$ is either the identity $id:V\to V$ or $-id$, because $I^2=id$.

    Hence, we can take $u$ to be a second eigenfunction so that $v = - u\circ i = \pm u$.
    For this $u$, the nodal line $\{u = 0\}$ is a simple closed curve invariant for the antipodal map $i$, so $i$ restricts to an orientation preserving isometry of $\{u=0\}$, for it would have some fixed point if it reversed the orientation.
    At the same time, since $i$ preserves the orientation of $\Sph^3$ and fixes $\widetilde{\Sigma}$, but inverts a unit normal vector along $\widetilde{\Sigma}$, it restricts to an orientation reversing isometry of $\widetilde{\Sigma}$.
    Since $T_x\Sph^3 = \{v\in \R^4:x\cdot v = 0\}$, and since $i(\widetilde{\Sigma}) = \widetilde{\Sigma}$, we can identify $T_x\widetilde{\Sigma} = T_{-x}\widetilde{\Sigma}$ for every $x\in\widetilde{\Sigma}$.
    Applying the chain rule, we conclude that $Du(x) = Dv(x)$ at any point $x\in \{u=0\}$, for if $Du(x) = -Dv(x)$ we would conclude that $i$ preserves the orientation of $\widetilde{\Sigma}$, which is not the case.
    This proves that $u=v = -u\circ i$.
    
    Thus, for this choice of $u$, the Jacobi vector field $\widetilde{X}=uN$ descends to an eigensection $X$ along $\Sigma$ of the Jacobi operator $L_\Sigma$.
    By the stability of $\Sigma$, the associated eigenvalue of $X$ is nonnegative.
    Hence, the second eigenvalue of $\widetilde{\Sigma}$ is also nonnegative, and $\widetilde{\Sigma}$ has index less than or equal to one.
\end{proof}

\begin{proposition}
    Let $\Sigma$ be a smooth closed embedded minimal hypersurface in a Riemannian manifold $(M,g)$.
    Fix an integer $k\geq 2$ and a real number $\alpha\in(0,1)$, and write $\mathfrak{X}^\perp_{k,\alpha}(\Sigma)$ for the set of $C^{k,\alpha}$ normal vector fields along $\Sigma$.
    If the nullity of $\Sigma$ is less than or equal to some integer $n\geq 0$, then there exists a smooth embedding $\varphi:\bar{B}^n\to\mathfrak{X}^\perp_{k,\alpha}(\Sigma)$ with $\varphi(0)=0$ and $D\varphi(0) = Id$, and such that every closed minimal hypersurface $\Sigma'\subset M$ sufficiently close to $\Sigma$ in the varifold topology is the graph over $\Sigma$ of some vector field $X=\varphi(w)$ for some $w\in \bar{B}^n$.
    This means that $\Sigma' = \{ \exp_x(X(x)): x \in \Sigma \}$.
    \label{prop: White's deformation}
\end{proposition}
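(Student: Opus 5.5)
We will prove this by a Lyapunov--Schmidt reduction of the minimal surface equation near $\Sigma$, in the spirit of White's structure theorem for the space of minimal submanifolds.

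\emph{Setup.} To avoid sign ambiguity when $\Sigma$ is one-sided, work with normal sections throughout. For $X\in\mathfrak{X}^\perp_{k,\alpha}(\Sigma)$ small, let $\Sigma_X=\{\exp_x(X(x)):x\in\Sigma\}$. Consider the mean curvature map
\[ H:\mathcal{U}\subset\mathfrak{X}^\perp_{k,\alpha}(\Sigma)\to\mathfrak{X}^\perp_{k-2,\alpha}(\Sigma), \]
where $H(X)$ is the mean curvature vector of $\Sigma_X$ pulled back to $\Sigma$ via the normal exponential map and projected onto $N\Sigma$. This map is smooth, $H(0)=0$, and its linearization is the Jacobi operator $DH(0)=L_\Sigma$. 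The operator $L_\Sigma$ is self-adjoint and elliptic, hence Fredholm of index zero. Let $K=\ker L_\Sigma$, which consists of smooth sections and has $\dim K=nul(\Sigma)=:m\le n$. Let $Q$ be the $L^2$-orthogonal projection onto $K^\perp$.

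\emph{Reduction.} Split $X=w+Y$ with $w\in K$ and $Y\in K^\perp\cap C^{k,\alpha}$. Apply the implicit function theorem to $(w,Y)\mapsto Q H(w+Y)$. Its $Y$-derivative at $0$ is $L_\Sigma:K^\perp\cap C^{k,\alpha}\to K^\perp\cap C^{k-2,\alpha}$, which is an isomorphism by Schauder theory and Fredholm alternative. This gives a smooth map $Y(w)$ on a ball in $K$ with $Y(0)=0$ and $DY(0)=0$, and every solution of $QH=0$ near $0$ is of the form $w+Y(w)$. Identify $K\cong\R^m$ by an orthonormal basis and include $\R^m\subset\R^n$. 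Define $\varphi(w)=w+Y(w)$ for $w$ in a small ball, rescaled to $\bar B^m$, and extend it to $\bar B^n$. When $m<n$, extend by adding the remaining $n-m$ coordinates times fixed smooth sections spanning a complement, so that $\varphi$ remains an embedding. The normalization $D\varphi(0)=Id$ should then be read on $K$, where it holds because $DY(0)=0$, via this identification. Every minimal $\Sigma_X$ with $X$ small in $C^{k,\alpha}$ satisfies $H(X)=0$, hence $QH(X)=0$, hence $X=\varphi(w)$.

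\emph{Varifold closeness implies graphicality.} It remains to show that a closed minimal hypersurface $\Sigma'$ that is varifold-close to $|\Sigma|$ is a $C^{k,\alpha}$-small normal graph over $\Sigma$. I expect this to be the main obstacle. The argument goes by contradiction: take minimal $\Sigma'_j\to|\Sigma|$ as varifolds. Since the limit has multiplicity one and is smooth, Allard's regularity theorem gives uniform $C^{1,\alpha}$ graphical control locally, with density close to $1$ everywhere by the monotonicity formula and upper semicontinuity of density. Elliptic estimates for the minimal surface equation then upgrade this to $C^{k,\alpha}$ convergence, so $\Sigma'_j$ is a single-sheeted normal graph of $X_j\to0$ in $C^{k,\alpha}$. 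Two points need care here. First, one must rule out extra small components far from $\Sigma$; the monotonicity formula gives each such component a definite lower mass bound, which contradicts varifold convergence. Second, the relevant notion is varifold closeness to multiplicity one, and this is precisely what the application to $\Sigma\in\mathcal{Z}$ with $|\Sigma|\in\mathcal{V}$ supplies.
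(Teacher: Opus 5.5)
Your proposal is correct and is essentially the paper's argument: the paper cites White's local finite-dimensional reduction to get an embedding $\psi$ of a neighborhood of $0$ in $\ker L_\Sigma$ into $\mathfrak{X}^\perp_{k,\alpha}(\Sigma)$ whose graphs contain every nearby minimal hypersurface, uses Allard regularity to turn varifold closeness into smooth closeness, and then extends $\psi$ to $\bar B^n$ using extra directions of the infinite-dimensional space, whereas you write out the Lyapunov--Schmidt argument behind White's theorem explicitly. The only cosmetic point is that rescaling the domain to the unit ball multiplies $D\varphi(0)$ by a constant factor, a normalization the paper also glosses over (it works on $\bar B^n_\epsilon$).
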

\begin{proof}
    It follows from Theorem 1.3 of \cite{Wht1} that there is a neighborhood $\mathcal{W}\subset \ker L_\Sigma$ of the origin and a smooth embedding $\psi:\mathcal{W}\to \mathfrak{X}^\perp_{k,\alpha}(\Sigma)$ with $\psi(0) = 0$ and $D\psi(0) = Id$, and such that every closed minimal hypersurface $\Sigma'\subset M$ sufficiently close to $\Sigma$ in the varifold topology is the graph over $\Sigma$ of some vector field $X=\psi(w)$ for some $w\in \mathcal{W}$.
    This is true, since Allard regularity implies that any closed minimal hypersurface $\Sigma'$ that is sufficiently close in the varifold topology to $\Sigma$ is close to $\Sigma$ in the smooth topology.
    Our hypothesis implies that $\dim\ker L_\Sigma\leq n$, so we may embed $\ker L_\Sigma\subset \R^n$ linearly and take $\epsilon>0$ so small that $\bar{B}^n_\epsilon\cap \ker L_\Sigma = \bar{B}^n_\epsilon\cap\mathcal{W}$.
    It is now easy to extend $\psi$ to obtain the embedding $\varphi$ as required, since $\mathfrak{X}^\perp_{k,\alpha}(\Sigma)$ is an infinite dimensional Banach space. 
\end{proof}

With the notation of subsection 5.1, let $g$ be a metric with equal projective widths, and denote by 
\[\mathcal{Z} = \{\Sigma\in\Pcal:area(\Sigma)=\sigma_0\}.\]
We also write $\mathcal{V}$ for the set of varifolds $|\Sigma|$ for $\Sigma\in\mathcal{Z}$.

It follows from Propositions \ref{prop: nullity upper bound} and \ref{prop: White's deformation} that, associated to every $\Sigma\in\mathcal{Z}$, there is a $\delta_\Sigma>0$ and an embedding $\varphi_\Sigma$, from the closed unit ball $\bar{B}^3$ into $\Pcal^{k,\alpha}$, such that $\varphi_\Sigma(0) = \Sigma$ and $\Sigma'\in\mathcal{B}_\Sigma = \varphi_\Sigma(\bar{B}^3)$ for every closed minimal surface $\Sigma'$ with $\FF(|\Sigma'|,|\Sigma|)<\delta_\Sigma$.
We define the subset $\mathcal{G}\subset\mathcal{Z}$ as the set of those $\Sigma\in\mathcal{Z}$ for which $\varphi_\Sigma$ and $\delta_\Sigma$ can be chosen so that $\varphi_\Sigma(z)$ is a minimal surface for every $z\in\bar{B}^3$.
Note that $\mathcal{G}$ has the structure of a 3-manifold and is an open subset of $\mathcal{Z}$.

As in Proposition \ref{prop: uniform convergence in Flat + F}, we assume that $\{\phi_i\}\subset\Pcal'_3$ is an optimal sequence of 3-sweepouts whose image $\Lambda(\{\phi_i\})$ lies in some compact subset $\mathcal{W}\subset\mathcal{V}$.
We also write $\mathcal{K}=\{\Sigma\in\mathcal{Z}:|\Sigma|\in\mathcal{W}\}$.

The results of Propositions \ref{prop: uniform convergence in Flat + F}, \ref{prop: nullity upper bound} and \ref{prop: White's deformation} show that we are in position to apply all the arguments in \cite[Section 6.2]{AMN2}.
In particular, we can follow the proof of \cite[Propositions 6.7, 6.8, 6.9 and 6.10]{AMN2} to obtain the following result.

\begin{proposition}
    There is a sequence of 3-sweepouts $\psi_i:Y_i\to \mathcal{G}$ and a compact set $\mathcal{K}_0\subset\mathcal{G}$ such that
    \[\lim_{i\to\infty}\sup_{y\in Y_i}\FF(\psi_i(y),\mathcal{K}_0) = 0. \]
    Moreover each $Y_i$ is a compact connected 3-dimensional simplicial complex.
    \label{prop: sweepouts with image in G}
\end{proposition}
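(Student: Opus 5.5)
The plan is to follow the scheme of \cite[Section 6.2]{AMN2}: use the uniform closeness in Proposition \ref{prop: uniform convergence in Flat + F} to replace each $\phi_i$ by a nearby map into the finite-dimensional local charts $\mathcal{B}_\Sigma=\varphi_\Sigma(\bar{B}^3)$ around the elements of $\mathcal{K}$, and then push those maps into the minimal locus. First, by compactness of $\mathcal{K}$ in $C^{k,\alpha}$, I would cover it by finitely many charts $\varphi_{\Sigma_1}(B^3_{1/2}),\dots,\varphi_{\Sigma_N}(B^3_{1/2})$. For $i$ large, Proposition \ref{prop: uniform convergence in Flat + F} places every $\phi_i(x)$ in a small $\FF$-neighbourhood of $\mathcal{K}$. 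After refining $X_i$, each cell then maps near a single $\Sigma_j$. Using a nearest-point retraction onto $\mathcal{B}_{\Sigma_j}$, which is possible because $\varphi_{\Sigma_j}$ is a smooth embedding into the Banach space $\mathfrak{X}^\perp_{k,\alpha}(\Sigma_j)$, I would homotope $\phi_i$ to a map $\phi_i'$ with values in $\bigcup_j\mathcal{B}_{\Sigma_j}$. The homotopy would stay inside $\Pcal$, be $\FF$-small, and use a partition of unity on the overlaps. Since $\phi_i'$ is homotopic to $\phi_i$, it is still a 3-sweepout, and its maximal area tends to $\sigma_0$.

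Second, I would show that near $\mathcal{K}$ the minimal surfaces in each chart fill an open set of the 3-ball, so that the relevant points lie in $\mathcal{G}$. In the chart, the area restricted to $\mathcal{B}_\Sigma$ is a smooth function $f_\Sigma$ on $\bar{B}^3$. By Proposition \ref{prop: White's deformation}, every nearby minimal surface lies in $\mathcal{B}_\Sigma$, and the nearby elements of $\mathcal{Z}$ are minimizers, hence correspond to minima of $f_\Sigma$, with $f_\Sigma\geq\sigma_0$ on $\mathcal{B}_\Sigma$. If the set $\{f_\Sigma=\sigma_0\}$ missed an open subset near $\phi_i'$, I would use a Lusternik--Schnirelmann argument as in \cite[Prop.\ 6.8--6.9]{AMN2}. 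Cup-length 3 for $\lambda$ on a 3-dimensional family forces the sweepout to see the top class. On the other hand, the subset of $\mathcal{Z}$ where the chart does not consist entirely of minimal surfaces is ``lower-dimensional'', and one can push off it by decreasing area through the gradient flow of $f_\Sigma$ in $\bar{B}^3$. The pushed sweepout would then have $\max$ area strictly below $\sigma_0+o(1)$ while still detecting $\lambda^3$, contradicting $\sigma_3=\sigma_0$. The conclusion of this step is that $\mathcal{G}$ contains a compact set $\mathcal{K}_0$ whose charts absorb the whole image of $\phi_i'$, and I would retract onto the minimal locus to get $\psi_i:Y_i\to\mathcal{G}$.

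Finally, I would take $Y_i$ to be the connected component of a simplicial subdivision of $X_i$ that carries a nonzero $\psi_i^*(\lambda^3)$; such a component exists because cohomology splits over components. I would also restrict to the 3-skeleton, which is harmless because $\lambda^3$ is detected there. The uniform convergence then follows from the $\FF$-smallness of all the homotopies. The main obstacle is the second step: showing that the degenerate part $\mathcal{Z}\setminus\mathcal{G}$ can be avoided, which requires the index and nullity control of Proposition \ref{prop: nullity upper bound} to make $f_\Sigma$ a function on a ball of dimension exactly 3. To attack it, I would first try to transplant \cite[Prop.\ 6.10]{AMN2} verbatim, working on the double cover $\Sph^3$ with antipodally equivariant charts.
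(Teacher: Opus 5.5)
Your overall route, deferring to \cite[Section 6.2]{AMN2}, is the same as the paper's. However, the two concrete mechanisms you supply would fail, and you do not identify the one modification the paper actually makes.

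\textbf{Step 1.} Proposition \ref{prop: uniform convergence in Flat + F} only makes $\phi_i(x)$ close to $\mathcal{K}$ in the $\FF$-metric (flat plus varifold). That does not make $\phi_i(x)$ a small $C^{k,\alpha}$ normal graph over any $\Sigma_j$: it may carry thin tentacles or fine wrinkles of negligible area. So $\phi_i(x)$ is not a point of $\mathfrak{X}^\perp_{k,\alpha}(\Sigma_j)$, and neither a nearest-point retraction nor an $\FF$-small homotopy inside $\Pcal$ is available.

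What works, and what the paper imports from \cite[Proposition 6.7]{AMN2}, is to discard $\phi_i$ on cells and rebuild the map on a fine subdivision:
\begin{itemize}
\item send each vertex $x$ to an element of $\mathcal{K}$ that is $\FF$-close to $\phi_i(x)$;
\item extend inductively over cells, using that a map $\partial B^n\to B^{k,\alpha}_r(\Sigma)$ extends by $\overline{\psi}(tx)=\{\exp_p(tX(x)_p):p\in\Sigma\}$.
\end{itemize}
Because the surfaces are one-sided, these graphs must be written with normal vector fields $X$ rather than functions times a unit normal. This Claim is precisely the change the paper makes to \cite{AMN2}. Your remark about antipodally odd functions on the double cover would be an equivalent fix, but you only raise it as a fallback.

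The rebuilt map is a 3-sweepout, but not because it is homotopic to $\phi_i$ in $\Pcal$. The reason is that its lift to $\Sph^3$ is uniformly $\Flat$-close to that of $\phi_i$, and Almgren--Pitts sweepouts survive small flat perturbations. This is why the flat part of Proposition \ref{prop: uniform convergence in Flat + F} is needed.

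\textbf{Step 2.} The mechanism here is not valid. As you note, $f_\Sigma\geq\sigma_0$ on every chart, so each element of $\mathcal{Z}\setminus\mathcal{G}$ is a minimum of $f_\Sigma$. A negative gradient flow moves the family towards such points, not off them. Moreover, no area-decreasing deformation can contradict $\sigma_3=\sigma_0$: a maximum of $\sigma_0+o(1)$ is what you already have.

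Whatever forces the sweepouts into $\mathcal{G}$ must be topological, not energetic. For instance, $\mathcal{Z}\setminus\mathcal{G}$ corresponds in each $3$-dimensional chart to the boundary of the closed set $\{f_\Sigma=\sigma_0\}$. It is therefore nowhere dense there, has covering dimension at most two, and cannot carry $\lambda^3$.

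Likewise, ``retract onto the minimal locus'' is only possible in charts around points of $\mathcal{G}$, where the whole ball consists of minimal surfaces. Elsewhere $\{f_\Sigma=\sigma_0\}$ can be an arbitrary closed subset of $\bar B^3$. Hence your assertion that the charts of some compact $\mathcal{K}_0\subset\mathcal{G}$ absorb the whole image of $\phi_i'$ is the content of the proposition, not a consequence of your sketch.

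Your final step, passing to a connected component and the $3$-skeleton, is fine.
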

\begin{proof}
    This follows from the same arguments used in the proof of \cite[Propositions 6.7-6.10]{AMN2}.
    The statement above is analogous to the statement of \cite[Proposition 6.10]{AMN2}, and the idea is to deform the sequence $\{\phi_i\}$ to obtain the new sequence $\{\psi_i\}$, after some possible changes in the domains of $\phi_i$.

    We also remark that there is a small change that we must do in our case.
    The proof of \cite[Proposition 6.7]{AMN2} starts with a claim that uses the fact that embedded 2-spheres in $\Sph^3$ are two-sided.
    In our case, we work with one-sided embedded surfaces, so the analogous statement is the following.
    \begin{namedtheorem}[Claim]
        Let $n$ be a positive integer and let $\psi:\partial B^n\to\Pcal^{k,\alpha}$ be a continuous map so that, for some $\Sigma\in\mathcal{K}$ and some $r>0$, $\psi(x)\in B^{k,\alpha}_r(\Sigma)$ for every $x\in\partial B^n$.
        Then $\psi$ extends continuously to a map $\bar{B}^n\to B^{k,\alpha}_r(\Sigma)$.
    \end{namedtheorem}
    \begin{proof}
        A map $\psi:\partial B^n\to B^{k,\alpha}_r(\Sigma)$ is equivalent to a continuous function $X:\partial B^n\to \mathfrak{X}^\perp_{k,\alpha}(\Sigma)$ such that $||X(x)||_{k,\alpha}<r$.
        The correspondence is given by writing $\psi(x)$ as the graph of $X(x)$ in the following sense
        \[ \psi(x) = \{ \exp_p(X(x)_p):p\in\Sigma \}. \]
    
        Now, we can take the extension $\overline{\psi}:\bar{B}^n\to B^{k,\alpha}_r(\Sigma)$ as
        \[ \overline{\psi}(tx) = \{ \exp_p(tX(x)_p): p\in\Sigma \} \]
        for $t\in[0,1]$ and $x\in \partial B^n$.
    \end{proof}
    The rest of the proof of \cite[Proposition 6.7]{AMN2} now works in our case to give the analogous result.
\end{proof}

\subsection{End of the proof}

Let $g$ be a Riemannian metric in $\R\Proj^3$ with all projective widths equal to $\sigma_0$, and consider the sequence of sweepouts $\psi_i:Y_i\to\Pcal^{k,\alpha}$ given by Proposition \ref{prop: sweepouts with image in G}.
Since $(\psi_i)^*\lambda^3\neq 0$, there exists some $\alpha\in H_3(Y_i;\Z_2)$ for which $(\psi_i)^*(\lambda^3)\cdot\alpha=1$.
In particular $\lambda^3\cdot(\psi_i)_*\alpha = 1$, and this implies that $\lambda^3_{|\mathcal{G}}\neq 0$.
Hence, $\mathcal{G}$ is a closed 3-dimensional manifold and each $\psi_i$, having mod 2 degree 1, maps $Y_i$ onto a component of $\mathcal{G}$.
Indeed, if $\mathcal{G}$ were open, we would have the vanishing $H^3(\mathcal{G};\Z_2)=0$.

We may pass to a subsequence of $\{\psi_i\}$ and suppose that $\psi_i(Y_i) = \psi_j(Y_j)$ for each $i,j$, and denote this component by $\Zoll$.
Note that $\Zoll$ is a compact subset of $\Pcal$ in the smooth topology and a 3-dimensional closed manifold, so it is a smooth 3-parameter family of minimal projective planes in $(\R\Proj^3,g)$.
Hence, we only need to prove that through any pair $(x,V)\in G_2(\R\Proj^3)$ passes a unique $\Sigma\in \Zoll$, in the sense that $T_x\Sigma = V$.

To see this, consider the set $\Pcal \Zoll$ of triples $(\Sigma,x,\mathcal{O})$ where $\Sigma\in M$, $x\in\Sigma$ and $\mathcal{O}$ is an orientation for $T_x\Sigma$.
This is a compact smooth manifold of dimension 5, and there is a canonical map
\[ T:(\Sigma,x,\mathcal{O})\in\Pcal \Zoll\mapsto (x,T_x\Sigma,\mathcal{O})\in G^+_2(\R\Proj^3), \]
from $\Pcal \Zoll$ to the Grassimanian bundle of oriented 2-planes of $\R\Proj^3$.

We claim that $T$ is locally injective, in the sense that every point $(\Sigma,x,\mathcal{O})$ has a neighborhood $U\subset \Pcal \Zoll$ where $T$ is injective when restricted to $U$.
Indeed, if this was not the case, there would be some $(\Sigma,x,\mathcal{O})$ and two sequences $(\Sigma^1_i,x^1_i,\mathcal{O}^1_i)$ and $(\Sigma^2_i,x^2_i,\mathcal{O}^2_i)$ converging to $(\Sigma,x,\mathcal{O})$ such that each $(\Sigma^1_i,x^1_i,\mathcal{O}^1_i)\neq (\Sigma^2_i,x^2_i,\mathcal{O}^2_i)$, but $T(\Sigma^1_i,x^1_i,\mathcal{O}^1_i) = T(\Sigma^2_i,x^2_i,\mathcal{O}^2_i)$.
Hence, $x^1_i=x^2_i$ and $T_{x^1_i}\Sigma^1_i = T_{x^2_i}\Sigma^2_i$ (as oriented planes) for each $i$.

We lift to the double covering $\pi:(\Sph^3,\widetilde{g})\to(\R\Proj^3,g)$.
When $i$ is sufficiently large, we may write $\widetilde{\Sigma}^2_i = \pi^{-1}(\Sigma^2_i)$ as a graph over $\widetilde{\Sigma}^1_i=\pi^{-1}(\Sigma^1_i)$.
Using elliptic theory applied to the minimal surface equation, we get, in the limit, a nontrivial solution $u\in C^\infty(\widetilde{\Sigma})$ to the Jacobi equation $L_{\widetilde{\Sigma}}u=0$ such that both $u(x) = 0$ and $Du(x) = 0$.
However, we know from the proof of Proposition \ref{prop: nullity upper bound} that the index of $\widetilde{\Sigma}$ is at most 1, so that $u$ is either a first or a second eigenfunction.
Both cases cannot happen, for if $u$ was a nontrivial first eigenfunction, then $u$ would not vanish anywhere; and if $u$ was a second eigenfunction, it would not vanish to the first order at any point.
This contradiction proves that $T$ is locally injective.

Since both $\Pcal \Zoll$ and $G^+_2(\R\Proj^3)$ are compact 5-dimensional manifolds and $T$ is locally injective, we conclude that $T$ is a covering map.
On the other hand, we know that $G^+_2(\R\Proj^3)\approx \R\Proj^3\times \Sph^2$, and that $\Pcal \Zoll$ is double covered by the connected space $\widetilde{\Pcal\Zoll}$ of the triples $(\widetilde{\Sigma},y,\mathcal{O})$ formed by $\widetilde{\Sigma} = \pi^{-1}(\Sigma)$ for some $\Sigma\in \Zoll$, $y\in \widetilde{\Sigma}$ and $\mathcal{O}$ an orientation for $T_y\widetilde{\Sigma}$.
Since $T$ lifts to a locally injective map $\widetilde{T}:(\widetilde{\Sigma},y,\mathcal{O})\in\widetilde{\Pcal\Zoll}\mapsto (y,T_y\widetilde{\Sigma})\in G^+_2(\Sph^3)$, and $G^+_2(\Sph^3)\approx \Sph^3\times \Sph^2$ is simply connected, we conclude that both $\widetilde{T}$ and $T$ are diffeomorphisms.

Since $T:\Pcal \Zoll\to G_2^+(\R\Proj^3)$ is a diffeomorphism, and the canonical projection $G_2^+(\R\Proj^3)\to G_2(\R\Proj^3)$ is a double cover, the composite $\Pcal \Zoll\to G_2(\R\Proj^3)$ is also a double cover.
Now, for each plane $\Sigma\in M$ and each point $x\in \Sigma$, there are two triples $(\Sigma,x,\mathcal{O})$ and $(\Sigma,x,-\mathcal{O})$ in $\Pcal M$, corresponding to the choice of orientation at $T_x\Sigma$.
Hence, for each pair $(x,V)\in G_2(\R\Proj^3)$ there is a unique $\Sigma\in \Zoll$ for which $T_x\Sigma = V$.
This proves that $\Zoll$ is a surface Zoll family in $(\R\Proj^3,g)$.


\section{Rigidity of the first four widths on surfaces}

This section is devoted to the proof of Theorem D, which states that a closed Riemannian surface $(M,g)$ is isometric to the real projective plane with its canonical metric if and only if it has equal first four widths.

We denote by $can$ the canonical metric on $\R\Proj^2$, and observe that, in \cite{Marx-Kuo}, the volume spectrum of $(\R\Proj^2,can)$ was completely calculated and the first five widths are equal to $2\pi$.
Hence, the only remaining implication to prove is that a closed surface is isometric to $(\R\Proj^2,can)$ if it has equal first four widths.

Let $(M,g)$ be a Riemannin surface with the first four widths equal to $2\pi$.
By Theorem 4.1 in \cite{AMN2}, $M$ is either $\Sph^2$ or $\R\Proj^2$.
In the first case, we also know that $g$ is a Zoll metric.
In both situations, there is no foliation $\{\gamma_t\}_{t\in S^1}$ of $(M,g)$ by simple closed geodesics.

Consider an optimal sequence of 4-sweepouts $\{\Psi_i\}\subset \Pcal_4$, in the sense that
\[ \lim_{i\to\infty} \max_{x\in X_i} \Mass \left( \Psi_i(x) \right) = 2\pi, \]
where $X_i = \text{dmn}(\Phi_i)$.
Let $\mathcal{V}$ be the set of stationary geodesic networks with integer multiplicities and mass equal to $2\pi$.
For $V\in \mathcal{V}$, let $T_V\in \I_1(M,\Z_2)$ be the sum of the edges of $V$ which have an odd multiplicity.
For $S\in\ZZ_1(M,\Z_2)$, we define
\[ d(S,\mathcal{V}) = \inf_{V\in\mathcal{V}}\left\{\Flat(S,T_V) + \FF(|S|,V) \right\}. \]
This satisfies the following triangle inequality:
\[ d(T,\mathcal{V})\leq d(S,\mathcal{V}) + \FF(T,S) \]
for every $T\in\ZZ_1(M,\Z_2)$.

Given $\delta>0$, we can suppose, by refining $X_i$ if necessary, that
\[ \FF(\Psi_i(x),\Psi_i(y)) < \delta/5 \]
for any $n$-dimensional simplex $\sigma\in X_i$ and any two points $x,y\in\sigma$.
Let $Z_i$ be the union of the simplices $\sigma\in X_i$ such that $d(\Psi_i(x),\mathcal{V})\leq 2\delta/5$ for every $x\in \sigma$, and $Y_i$ the union of all simplices $\tau\in X_i$ for which $d(\Psi_i(y),\mathcal{V})\geq \delta/5$ for every $y\in \tau$.
Then both $Y_i$ and $Z_i$ are sub-complexes of $X_i$ and $X_i = Y_i\cup Z_i$.

We claim that the restriction of $\Psi_i$ to $Y_i$ is not a sweepout for all $i$ sufficiently large.
To see this, suppose, for the sake of contradiction, that there is a subsequence $\{j\}\subset\{i\}$ for which $(\Psi_j|_{Y_j})^*(\overline\lambda)\neq 0$ in $H^1(Y_j;\Z_2)$.
Since $\omega_1(M,g) = 2\pi$ and since $\limsup_j\sup_{y\in Y_j} \Mass(\Psi_j(y)) \leq 2\pi$, we see that $\Psi_j|_{Y_j}$ is an optimal sequence of one-sweepouts for $\omega_1=2\pi$.
In particular, there is a one dimensional sub-complex $\sigma_j\subset Y_j$ that is homeomorphic to $S^1$ and such that $\Psi_j|_{\sigma_j}$ is a one-sweepout.

Define $\Psi'_j:=\Psi_j|_{\sigma_j}\in \Pcal_1$, and let $V\in C(\{\Psi'_j\})$.
If $V$ is not $\Z_2$-almost minimizing in annuli, we can proceed as in Parts 1 and 2 of the proof of Proposition 4.10 in \cite{Pitts} to obtain a set of concentric annuli so that, if a cycle $T$ is sufficiently close to $V$, say $\FF(V,|T|)<\epsilon = \epsilon(V)$, then $T$ must admit mass decreasing deformations supported in each annuli.

If, on the other hand, $V$ is $\Z_2$-almost minimizing in annuli, then it follows from $V$ being stationary and from Theorem 3.13 in \cite{Pitts} that $V$ is a stationary integral varifold.
Hence, $V\in\mathcal{V}$.
In this case, we denote by $\Tcal_V$ the set of cycles $T\in\ZZ_1(M,\Z_2)$ with support contained in the support of $V$.
By the Constancy Theorem for flat chains, every $T\in\Tcal_V$ is either $0$ or a sum of edges of $\text{spt}(V)$.

Therefore, $\Tcal_V$ is a finite set.
Moreover, if $T\in\Tcal_V$ is not equal to $T_V$, then  there is an edge $\gamma = \gamma(T,V)$ of $V$ with integer multiplicity $k\in\N$ where $T\llcorner\gamma = r\cdot \gamma$ for some $r\in\{0,1\}$ such that $r\neq k\mod 2$.
Now, the proof of Proposition 4.10 of \cite{MN-index} is local, and since $\gamma$ is smooth, it can be applied with $\Sigma = T$ to points $p\in\gamma$ near the midpoint of the edge $\gamma$.

Given $\rho>0$, there is some $\eta\in(0,\rho)$ such that, if $S\in \ZZ_1(M,\Z_2)$ and $\FF(|S|,V)<\eta$, then there is some $T\in\Tcal_V$ for which $\Flat(S,T)<\rho$.
If $2\rho<\delta/5$ and $S = \Psi'_j(x_j)$, $x_j\in\sigma_j$, then $T\neq T_V$ by the definition of $Y_j$.
Choose $\rho = \rho(V)>0$ sufficiently small so that Proposition 4.10 of \cite{MN-index} can e applied to $V$ and $\Sigma = T$ near the midpoint of the edge $\gamma(T,V)$ for any $T\in\Tcal_V$ not equal to $T_V$.
For each edge of V, this gives a set of annuli centered at its midpoint such that, if $S = |\Psi'_j(x_j)|$ ($x_j\in\sigma_j$) satisfies the inequality $\FF(|S|,V)<\epsilon(V)$, then one of the edges of $S$ admits mass decreasing deformations supported in each annulus.

By the compactness of $C(\{\Psi'_j\}_j)$ in the varifold topology, we can find a finite set $\{V_1,...,V_n\}\subset C(\{\Psi'_j\}_j)$ such that $C(\{\Psi'_j\}_j)\subset \cup_{k=1}^n B_\FF(V_k,\eta(V_k)/2)$.
Hence, for some positive number $\xi>0$ and for sufficiently large $j$, the varifold $|\Psi'_j(x)|$ is contained in some ball $B_\FF(V_k,\eta(V_k)/2)$ for any point $x\in\sigma_j$ such that $\Mass(\Psi'_j(x))\geq 2\pi-\xi$.

Thus, we must have the vanishing of the cohomology class $(\Psi_i|_{Y_i})^*(\overline\lambda) = 0$ for all sufficiently large $i$.
As a consequence, $(\Psi_i|_{Z_i})^*(\overline\lambda^3)\neq 0$ for all sufficiently large $i$.

Since $\delta>0$ was arbitrary, we can perform a diagonal argument to obtain a sequence of 3-sweepouts $\{\Phi_i\}_i$ such that
\begin{equation}
    \lim_{i\to\infty} \sup_{x\in X_i} d(\Phi_i(x),\mathcal{V}) = 0,
    \label{eq: stronger convergence for almgren-pitts}
\end{equation}
where $X_i = \text{dmn}(\Phi_i)$.
We can further assume that each $X_i$ is a connected 3-dimensional cubical complex.

Since $\{\Phi_i\}$ is a sequence of 3-sweepouts satisfying \eqref{eq: stronger convergence for almgren-pitts}, it is also a sequence of 2-sweepouts that satisfies the same equality.
And since we already know from \cite{AMN2} that $M$ is either $\Sph^2$ or $\R\Proj^2$, we can transpose all the arguments of \cite[Section 4]{AMN2} for our case.
In particular, we have the following result.

\begin{proposition}
    Given $V\in C(\{\Phi_i\})$, one of the three possibilities hold:
    \begin{itemize}
        \item[(i)] $V$ is a simple closed geodesic of length $2\pi$ with multiplicity one;
        \item[(ii)] $V$ is a sum of two simple closed geodesics of length $\pi$ and multiplicity one which intersect transversely at exactly one point; or
        \item[(iii)] $V$ is a simple closed geodesic of length $\pi$ and multiplicity two.
    \end{itemize}
    If \textnormal{(i)} holds, $M$ is diffeomorphic to $\Sph^2$, g is a Zoll metric, and every $W\in C(\{\Phi_i\})$ is also described as in \textnormal{(i)}.
    Otherwise, $M$ is $\R\Proj^2$ and every element $W\in C(\{\Phi_i\})$ is either as in \textnormal{(ii)} or as in \textnormal{(iii)}.
    \label{prop: Props 4.6 to 4.12 in AMN2}
\end{proposition}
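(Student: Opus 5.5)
The plan is to transpose the arguments of \cite[Section 4]{AMN2} (their Propositions 4.6--4.12), which treat optimal sequences of 2-sweepouts on surfaces with $\omega_1=\omega_2$. The stronger convergence \eqref{eq: stronger convergence for almgren-pitts} replaces the almost-minimizing analysis used there. First, by \eqref{eq: stronger convergence for almgren-pitts}, every $V\in C(\{\Phi_i\})$ lies in $\mathcal{V}$. So $V$ is a stationary geodesic network with integer multiplicities, mass $2\pi$, and it is the varifold limit of cycles $\Phi_{i_j}(x_j)$ whose flat limit is $T_V$. Next, I would use that $\{\Phi_i\}$ is also a sequence of 1- and 2-sweepouts with the same widths $\omega_1=\omega_2=2\pi$. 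A Lusternik--Schnirelmann argument then shows that $C(\{\Phi_i\})$ cannot be ``small''. If all critical varifolds were supported on a set of networks that can be avoided by a 1-parameter family, one could cut the domains and lower the widths. In particular, $C(\{\Phi_i\})$ has no isolated structure, and each $V$ admits nearby critical varifolds in a 1-parameter (in fact 2-parameter) way.

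The classification of $V$ itself would follow from the combinatorics of stationary networks of length $2\pi$ together with the lower bound on lengths coming from $\omega_1=2\pi$. Any simple closed geodesic $\gamma$ that is nonseparating (one-sided, in $\R\Proj^2$) yields, by a sweepout argument, $2\,\textnormal{length}(\gamma)\geq\omega_1=2\pi$, hence $\textnormal{length}(\gamma)\geq\pi$. A separating closed geodesic generates a 1-sweepout by curve shortening of its two sides, so $\textnormal{length}\geq 2\pi$. Then decompose $V$ into closed geodesic loops, using the evenness of the degree of vertices of $T_V$ and the fact that $T_V$ is the flat limit of boundaries. The condition that $T_V$ be homologous to zero mod 2 in the double sense of sweepouts forces three options: a single separating geodesic of length $2\pi$ (case (i)); two one-sided geodesics of length $\pi$, which in $\R\Proj^2$ meet transversely at exactly one point, since two one-sided curves have odd intersection number and the length budget forbids more crossings (case (ii)); or one one-sided geodesic counted twice (case (iii)). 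Networks with vertices of higher valence or figure-eights are excluded by a first-variation/length count: each closed subloop has length at least $\pi$ and the total is $2\pi$.

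For the dichotomy, in case (i) there is a separating simple closed geodesic. In $\R\Proj^2$ every simple closed curve is either one-sided or bounds a disk, and the disk side could be swept out, contradicting the absence of a foliation by closed geodesics (noted above). Combined with Theorem 4.1 of \cite{AMN2}, this gives $M\approx\Sph^2$ with $g$ Zoll, where all geodesics are simple of length $2\pi$ and only (i) occurs. Conversely, cases (ii) and (iii) require one-sided curves, so $M=\R\Proj^2$, and then (i) cannot occur for any $W$. I expect the main obstacle to be the exclusion of exotic networks, in particular multiplicity and vertex configurations, in the classification step. There I would lean on the arguments of \cite[Props. 4.6--4.12]{AMN2}, only checking that \eqref{eq: stronger convergence for almgren-pitts} supplies the flat control of $T_V$ that they use.
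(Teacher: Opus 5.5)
Your overall plan matches the paper, whose proof only invokes \cite[Propositions 4.6--4.12]{AMN2}, the argument before \cite[Remark 4.1]{AMN2} and the final case analysis of \cite[Theorem 4.1]{AMN2}. The one thing it checks is that $\{\Phi_i\}$, being 3-sweepouts satisfying \eqref{eq: stronger convergence for almgren-pitts}, are in particular 2-sweepouts with that property. The problem is the self-contained classification you offer in place of that citation, because its key inputs are false. Getting $2\,\mathrm{length}(\gamma)\geq\omega_1$ for one-sided $\gamma$ (resp. $\mathrm{length}(\gamma)\geq\omega_1$ for separating $\gamma$) ``by a sweepout argument'' requires a 1-sweepout of maximal mass at most $2\,\mathrm{length}(\gamma)$ (resp. $\mathrm{length}(\gamma)$). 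In general no such sweepout exists. A large round sphere joined by a thin neck to a tiny cross-cap is a metric on $\R\Proj^2$ with an arbitrarily short one-sided closed geodesic while $\omega_1$ stays large. The neck of a dumbbell is a short separating geodesic. Curve shortening from a push-off of $\gamma$ may also converge to another closed geodesic rather than a point, so it does not produce a sweepout. Under the present hypotheses such bounds hold a posteriori, but they have to be extracted from the min-max structure of $C(\{\Phi_i\})$, which is what the cited propositions do.

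The later steps rely on these bounds and have further gaps.
\begin{itemize}
\item The length count cannot exclude, say, a figure-eight of total length $2\pi$ with lobes of length $\pi$. Its lobes are broken geodesic loops, so your bounds for closed geodesics do not even apply to them.
\item ``No higher-valence vertices'' contradicts case (ii) itself, which has a $4$-valent vertex.
\item An odd mod $2$ intersection number allows $3,5,\dots$ crossings, and the total length $2\pi$ says nothing about their number. Exactly one crossing would follow, for instance, from a systolic bound: two crossings produce a one-sided broken curve of length at most $\pi$, which rounding the corners shortens further. You have not established that bound.
\item Your dichotomy argument is not a valid deduction, since sweeping out a disk bounded by a geodesic contradicts nothing. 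Nothing in your sketch rules out a type (i) element on $\R\Proj^2$, for example a two-sided geodesic of length $2\pi$ running along a one-sided geodesic $\gamma$ of length $\pi$. Such a curve bounds a thin M\"obius band and is $d$-close to $2\gamma$, since $T_{2\gamma}=0$. Excluding such mixed critical sets is exactly the case analysis in \cite[Theorem 4.1]{AMN2}.
\end{itemize}
As in the paper, the proof should consist of verifying that $\{\Phi_i\}$ meets the hypotheses of \cite[Section 4]{AMN2}, not of the heuristics above.
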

\begin{proof}
    This is the summary of the content of \cite[Propositions 4.6 to 4.12]{AMN2}, and the argument that they do before \cite[Remark 4.1]{AMN2} and their proof of \cite[Theorem A]{AMN2} (see in particular the final case analysis in the proof of \cite[Theorem 4.1]{AMN2}).
    We can apply these arguments here, because $\{\Phi_i\}_i$ is also a sequence of 2-sweepouts. 
\end{proof}

Up to this point, we argued the same way as in the proof of \cite[Theorem A]{AMN2}.
Here is where our argument differs.

\begin{proposition}
    If $(M^2,g)$ is a Zoll sphere, then $\omega_4(M,g)>\omega_3(M,g)$.
    \label{prop: w_4>w_3 for Zoll spheres}
\end{proposition}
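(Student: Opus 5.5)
We argue by contradiction. Let $(M,g)$ be a Zoll sphere, normalized so that all its geodesics are simple closed curves of length $L$. By \cite{AMN2} we know $\omega_1=\omega_2=\omega_3=L$, so we suppose that $\omega_4(M,g)=L$ as well and aim for a contradiction. The plan is to run the concentration argument developed just above, now at the level of $4$-sweepouts, and then exploit the fact that the space of critical varifolds is too small to carry $\overline{\lambda}^4$.

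\textbf{Step 1: concentrating $4$-sweepouts.} Take an optimal sequence $\{\Psi_i\}\subset\Pcal_4$ for $\omega_4=L$. Decompose $X_i=Y_i\cup Z_i$ exactly as above, using the distance $d(\cdot,\mathcal{V})$. The argument above shows that $(\Psi_i|_{Y_i})^*\overline\lambda=0$ for large $i$; that argument only used $\omega_1=L$ together with the almost-minimizing and Constancy Theorem dichotomy. Since $\overline\lambda^4=\overline\lambda\cdot\overline\lambda^3$, we conclude that $(\Psi_i|_{Z_i})^*(\overline\lambda^3)\neq 0$. I would redo this step with the split $\overline\lambda^4=\overline\lambda\cdot\overline\lambda^3$, or alternatively argue with $\overline\lambda^4$ directly via the relative cup product on $(X_i,Y_i)$. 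I expect to obtain that $(\Psi_i|_{Z_i})^*(\overline\lambda^4)\neq 0$ while $\sup_{Z_i}d(\Psi_i,\mathcal{V})\to0$ after a diagonal argument. Applying Proposition \ref{prop: Props 4.6 to 4.12 in AMN2} to this sequence, which is in particular a sequence of $2$-sweepouts, every element of the critical set is of type (i). So these sweepouts concentrate, in the sense of $d$, on the set $\mathcal{G}$ of multiplicity-one simple closed geodesics, and for such $V$ we have $T_V=V$.

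\textbf{Step 2: topology of $\mathcal{G}$.} The geodesics of a Zoll sphere form an unoriented transitive family of simple closed curves. By Remark \ref{rmk: the case n+1=2} and \cite{LM}, its parameter space $P\cong\mathcal{G}$ is homeomorphic to $\R\Proj^2$. Moreover, the map $\gamma\in\mathcal{G}\mapsto[\gamma]\in\ZZ_1(M,\Z_2)$ is continuous in the $\FF$-metric. Hence its pullback of $\overline\lambda^4$ lies in $H^4(\R\Proj^2;\Z_2)=0$.

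\textbf{Step 3: factoring through $\mathcal{G}$ (main obstacle).} We must show that, for large $i$, $\Psi_i|_{Z_i}$ is homotopic in $\ZZ_1(M,\Z_2)$ to a map that factors through $\mathcal{G}$. Here the flat topology suffices, since the sweepout condition is cohomological and Almgren's isomorphism holds in the flat topology. The first thing I would try is the following. Use compactness of $\mathcal{G}$ to choose $\rho>0$ so that any two cycles of mass at most $2L$ within flat distance $\rho$ are joined by a canonical short path, namely Almgren's isoperimetric filling and coning. Refine $Z_i$ so that each vertex $x$ has a geodesic $\gamma_x\in\mathcal{G}$ with $\Flat(\Psi_i(x),\gamma_x)<\rho/10$; this uses the control on $\Flat(S,T_V)$ built into $d$. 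Since $\mathcal{G}\cong\R\Proj^2$ is an ENR and geodesics that are flat-close are close in $P$, one can extend the vertex assignment simplex by simplex to a continuous map $f_i:Z_i\to\mathcal{G}$, possibly after a further subdivision. The standard Almgren interpolation then yields a flat homotopy between $\Psi_i|_{Z_i}$ and $f_i$, because the two maps stay $\rho$-close.

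Consequently $(\Psi_i|_{Z_i})^*\overline\lambda^4=f_i^*(\overline\lambda^4|_{\mathcal{G}})=0$, which contradicts Step 1. Therefore $\omega_4(M,g)>\omega_3(M,g)$. The delicate points are the continuity of the nearest-geodesic choice and the flat-homotopy construction in Step 3; everything else follows from results already stated in the paper.
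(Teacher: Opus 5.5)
One step does not hold up as written. Nothing supports your expectation that $(\Psi_i|_{Z_i})^*(\overline\lambda^4)\neq 0$. The relative cup-product argument on the cover $X_i=Y_i\cup Z_i$ costs exactly one power of $\overline\lambda$: because $\overline\lambda$ is killed on $Y_i$, what must survive on $Z_i$ is $\overline\lambda^3$. No refinement of that argument recovers the fourth power. The slip is harmless, though. You already derived $(\Psi_i|_{Z_i})^*(\overline\lambda^3)\neq0$, and $H^3(\R\Proj^2;\Z_2)=0$. So run Steps 2 and 3 with $\overline\lambda^3$, i.e.\ with $3$-sweepouts as the paper does, and the contradiction goes through.

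With that fix your proof is correct, and it shares the paper's skeleton:
\begin{itemize}
\item concentrate $3$-sweepouts on the multiplicity-one geodesics;
\item replace them by flat-close maps into a space with vanishing $H^3(\,\cdot\,;\Z_2)$;
\item conclude by stability of the sweepout property under small flat perturbations.
\end{itemize}
The replacement step is where you differ.

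The paper builds $\Phi_i'$ skeleton by skeleton by graphical interpolation and coning over nearby geodesics. Its values are smooth simple closed curves that are generally not geodesics, so it then needs Smale's theorem to know that the space $\Scal$ of such curves is homotopy equivalent to $\R\Proj^2$.

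You instead keep the map inside $\mathcal{G}$ itself. $\mathcal{G}$ is the parameter space of the transitive family of Zoll geodesics, hence homeomorphic to $\R\Proj^2$ (Theorem B for $n+1=2$, via LeBrun--Mason). Being a compact ENR, it lets you extend a vertex assignment with close vertex images to all of $Z_i$ with small simplex images, by linear interpolation in $\R^N$ followed by a neighbourhood retraction. Your claim that flat-close geodesics are close in $P$ is right, since $P\to(\mathcal{G},\Flat)$ is a continuous bijection from a compact space.

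So you trade Smale's theorem for the topology of the space of geodesics plus a purely topological extension lemma. The paper's argument is more hands-on and uses only the local graphical structure of $\mathcal{V}_1$.

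Finally, the ``flat homotopy via Almgren interpolation'' in Step 3 is more than you need and is the least precise part of your write-up. The standard fact the paper invokes is enough: for a suitable $\xi>0$, $\sup_x\Flat(\Phi(x),\Phi'(x))<\xi$ forces $\Phi^*\overline\lambda=(\Phi')^*\overline\lambda$, and hence equality of all cup powers.
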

\begin{proof}
    Suppose, on the contrary, that $(M,g)$ is a Zoll sphere with $\omega_4(M,g)=\omega_3(M,g)$.
    After rescaling, we may assume that $\omega_1(M,g) = 2\pi$.
    By \cite[Proposition 3.2]{AMN2}, $\omega_4(M,g) = \omega_1(M,g)$, and we define $\mathcal{V}_1\subset\mathcal{V}$ as the set of stationary varifolds that are simple closed geodesics of length $2\pi$ and multiplicity one.
    By Proposition \ref{prop: Props 4.6 to 4.12 in AMN2}, and by the argument before it, there exists a sequence of 3-sweepouts $\{\Phi_i\}\subset \Pcal_3$ such that
    \[ \lim_{i\to\infty} \sup_{x\in X_i} d(\Phi_i(x),\mathcal{V}_1) = 0. \]
    In particular, $\Lambda(\{\Phi_i\}_i)=C(\{\Phi_i\}_i)\subset \mathcal{V}_1$.
    
    Observe that, since $T_V$ is also a simple closed geodesic of length $2\pi$ and multiplicity one for every $V\in\mathcal{V}_1$, we can think of $\mathcal{V}_1$ as both a set of varifolds and a set of mod-2 flat cycles.
    With this notation, $d(\Phi_i(x),\mathcal{V}_1)$ is identified with the $\FF$ metric in $\ZZ_1(M,\Z_2)$.
    In other words,
    \[ d(\Phi_i(x),\mathcal{V}_1) = \FF(\Phi_i(x),\mathcal{V}_1) \]
    for every $x\in X_i$ and we can rewrite the previous limit as
    \[ \lim_{i\to\infty} \sup_{x\in X_i} \FF(\Phi_i(x),\mathcal{V}_1) = 0. \]

    By Allard Regurality, we know that two elements of $\mathcal{V}_1$ that are sufficiently close in the varifold topology are close in the $C^\infty$ topology.
    Let $\Scal$ denote the set of smooth simple closed curves in $M$ with the $C^\infty$ topology.
    We may also write $\mathcal{V}_1\subset \Scal$ and denote an element of $\mathcal{V}_1$ by $\gamma$.
    
    We will prove that, for $i$ sufficiently large, we can construct 3-sweepouts $\Phi'_i:X_i\to \Scal\subset \ZZ_1(M,\Z_2)$ such that
    \[ \lim_{i\to\infty}\sup_{x\in X_i} \FF(\Phi_i'(x),\Phi_i(x)) = 0. \]
    This will give us the desired contradiction, since we know that $\Scal$ is homotopic to $\R\Proj^2$ by the Smale Theorem \cite{Smale}, and therefore such a $\{\Phi'_i\}$ could not be a sequence of 3-sweepouts.

    Let $\epsilon>0$ be chosen to ensure that two elements of $\mathcal{V}_1$ $\epsilon$-close to each other in the varifold topology are also graphically close, in the sense that one is the graph over the other.
    For given positive numbers $\epsilon_1$, $\epsilon_2$ and $\epsilon_3$, there are $\delta_1$, $\delta_2$ and $\delta_3$ such that, if $\gamma_0,\gamma_1\in \mathcal{V}_1$ satisfy $\FF(\gamma_0,\gamma_1)<\delta_i$, then $\gamma_1 = \text{graph}_{\gamma_0}(u)$ for some $u\in C^\infty(\gamma_0)$ and
    \[ \FF(\gamma_s,\gamma_t)<\epsilon_i \]
    for $\gamma_t:=\text{graph}_{\gamma_0}(tu)$ and $s,t\in[0,1]$.
    We take $0<\delta_1<\delta_2<\delta_3$ and $0<\epsilon_1<\epsilon_2<\epsilon_3<\epsilon$, and require that $4\delta_i/3+\epsilon_i< \delta_{i+1}$ for $i=1,2$ and $4\delta_3/3+\epsilon_3<\epsilon$.
    
    Fix $i_0>0$ such that $\FF(\Phi_i(x),\mathcal{V}_1)<\delta_1/3$ for all $x\in X_i$ and all $i\geq i_0$, and recall that $X_i = \text{dmn}(\Phi_i)$ is a 3-dimensional cubical complex.
    Subdivide $X_i$ as $X_i(l)$ so that $\FF(\Phi_i(x),\Phi_i(y))<\delta_1/3$ whenever $x$ and $y$ are contained in the same $k$-dimensional cell  $\sigma\in X_i(l)$ for any $k\in\{0,1,2,3\}$.
    
    For every vertex $x\in X_i(l)$ there is some $\gamma_x\in \mathcal{V}_1$ for which $\FF(\Phi_i(x),\gamma_x)<\delta_1/3$.
    If $[x,y]$ is an edge, then
    \[
        \begin{split}
            \FF(\gamma_x,\gamma_y) & \leq \FF(\gamma_x,\Phi_i(x)) + \FF(\Phi_i(x),\Phi_i(y)) + \FF(\Phi_i(y),\gamma_y) \\
            & < \delta_1,
        \end{split}
    \]
    so that $\gamma_y = \text{graph}_{\gamma_x}(u_y)$ for some smooth function $u_y$.
    We define $\Phi_i'$ on $[x,y]$ as
    \[ \Phi_i'((1-t)x+ty) = \text{graph}_{\gamma_x}(tu_y), \]
    and observe that
    \[ \FF(\Phi_i'(z),\Phi_i'(w))< \epsilon_1 \]
    whenever $z,w\in[x,y]$.
    This defines $\Phi_i'$ in the 1-skeleton on $X_i(l)$, and ensures the bound
    \[
        \begin{split}
            \FF(\Phi_i(z),\Phi_i'(z)) & \leq \FF(\Phi_i(z),\Phi_i(x)) + \FF(\Phi_i(x),\gamma_x) + \FF(\gamma_x,\Phi_i'(z)) \\
            & < 2\delta_1/3 + \epsilon_1
        \end{split}
    \]
    for all $z$ in an edge with one of the vertices $x$.

    To define $\Phi_i'$ over the 2-cells, we take, for any face $\sigma\in X_i(l)$, the center point $x_\sigma\in \sigma$, and choose $\gamma_\sigma\in \mathcal{V}_1$ such that
    \[ \FF(\Phi_i(x_\sigma),\gamma_\sigma)<\delta_1/3. \]
    If $y\in\partial \sigma$, then there is a bound
    \[
        \begin{split}
            \FF(\gamma_\sigma, \Phi_i'(y)) & \leq \FF(\gamma_\sigma,\Phi_i(x_\sigma)) + \FF(\Phi_i(x_\sigma),\Phi_i(y)) + \FF(\Phi_i(y),\Phi_i'(y)) \\
            & < 4\delta_1/3 + \epsilon_1 \\
            & < \delta_2.
        \end{split}
    \]
    Hence, $\Phi_i'(y) = \text{graph}_{\gamma_\sigma}(u_y)$ for some $u_y\in C^\infty(\gamma_\sigma)$ that varies continuously on $y\in\partial \sigma$.
    After identifying $\sigma$ with the disk $D^2\subset\R^2$ and $\partial\sigma\approx S^1=\partial D^2$, we define
    \[ \Phi_i'(ty) = \text{graph}_{\gamma_\sigma}(tu_y) \]
    for all $t\in[0,1]$ and all $y\in\partial\sigma$.
    This gives a continuous extension of $\Phi_i'$ to the 2-skeleton of $X_i(l)$, and we also have the upper bounds
    \[ \FF(\Phi_i(x),\Phi_i'(y)) < \epsilon_2 \]
    for all $x,y\in \sigma$ and all faces $\sigma\in X_i(l)$, and
    \[
        \begin{split}
            \FF(\Phi_i(y),\Phi_i'(y)) & \leq \FF(\Phi_i(y),\Phi_i(x_\sigma)) + \FF(\Phi_i(x_\sigma),\gamma_\sigma) + \FF(\gamma_\sigma,\Phi_i'(y)) \\
            & < 2\delta_1/3 + \epsilon_2 \\
            & <2\delta_2/3 + \epsilon_2.
        \end{split}
    \]

    Finally, we extend $\Phi_i'$ continuously to all 3-cells in the following way.
    For a fixed 3-cell $\sigma\in X_i(l)$, we take its center point $x_\sigma\in\sigma$ and choose a $\gamma_\sigma\in\mathcal{V}_1$ such that
    \[ \FF(\Phi_i(x_\sigma),\gamma_\sigma)<\delta_1/3 < \delta_2/3. \]
    If $y\in\partial \sigma$, then
    \[
        \begin{split}
            \FF(\gamma_\sigma, \Phi_i'(y)) & \leq \FF(\gamma_\sigma,\Phi_i(x_\sigma)) + \FF(\Phi_i(x_\sigma),\Phi_i(y)) + \FF(\Phi_i(y),\Phi_i'(y)) \\
            & < 4\delta_2/3 + \epsilon_2 \\
            & < \delta_3.
        \end{split}
    \]
    Hence, $\Phi_i'(y) = \text{graph}_{\gamma_\sigma}(u_y)$ for some $u_y\in C^\infty(\gamma_\sigma)$ that varies continuously on $y\in\partial \sigma$.
    After identifying $\sigma$ with the ball $B^3\subset\R^3$ and $\partial\sigma\approx \Sph^2=\partial B^3$, we define
    \[ \Phi_i'(ty) = \text{graph}_{\Sigma_\sigma}(tu_y) \]
    for all $t\in[0,1]$ and all $y\in\partial\sigma$.
    This gives a continuous extension of $\Phi_i'$ to all points of $X_i(l)$, and we have the upper bounds
    \[ \FF(\Phi_i(x),\Phi_i'(y)) < \epsilon_3 \]
    for all $x,y\in \sigma$ and all 3-cells $\sigma\in X_i(l)$, and
    \[
        \begin{split}
            \FF(\Phi_i(y),\Phi_i'(y)) & \leq \FF(\Phi_i(y),\Phi_i(x_\sigma)) + \FF(\Phi_i(x_\sigma),\gamma_\sigma) + \FF(\gamma_\sigma,\Phi_i'(y)) \\
            & < 2\delta_2/3 + \epsilon_3 \\
            & <2\delta_3/3 + \epsilon_3 \\
            & < \epsilon
        \end{split}
    \]
    for all $y\in\sigma$ and all 3-cells $\sigma\in X_i(l)$.
    Thus,
    \[ \FF(\Phi_i(x),\Phi_i'(x))<\epsilon \]
    for all $x\in X_i(l)$, and the existence of the sequence $\{\Phi_i\}_i$ is proved by taking $\epsilon>0$ arbitrarily small.

    Now, it is known that there is a positive number $\xi$ such that, if $\Phi:X\to\ZZ_1(M,\FF,\Z_2)$ is a 3-sweepout and $\Phi':X\to\ZZ_1(M,\FF,\Z_2)$ is a continuous map such that $\Flat(\Phi(x),\Phi'(x))<\xi$ for every $x\in X$, then $\Phi'$ is also a 3-sweepout.
    Thus, the map $\Phi'_i$ is a 3-sweepout for $i$ sufficiently large, and we arrive at the desired contradiction.
\end{proof}

\begin{proposition}
    Let $(M,g)$ be a closed Riemannian surface diffeomorphic to $\R\Proj^2$.
    If 
    \[ \omega_1(M,g) = \omega_2(M,g) = \omega_3(M,g) = \omega_4(M,g) = 2\pi, \]
    then $(M,g)$ is a Zoll surface whose geodesics have length $\pi$.
    \label{prop: w_1=w_4 for (RP^2,g) => Zoll}
\end{proposition}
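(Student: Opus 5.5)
Using the diagonal sequence of 3-sweepouts $\{\Phi_i\}$ satisfying \eqref{eq: stronger convergence for almgren-pitts}, Proposition \ref{prop: Props 4.6 to 4.12 in AMN2} applies. Since $M\approx\R\Proj^2$, alternative (i) is excluded and every $V\in C(\{\Phi_i\})$ is of type (ii) or (iii). The plan is to show that, after discarding type (ii), the simple closed geodesics of length $\pi$ arising in type (iii) form a 2-parameter family through every point in every direction. Green's Theorem will then not even be needed for this proposition; it only enters afterwards to conclude $g=can$.

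\textbf{Step 1.} I first rule out configurations of type (ii) from a neighbourhood of the critical set, so that $\lim_i\sup_{x}d(\Phi_i(x),\mathcal{V}_{\mathrm{(iii)}})=0$, where $\mathcal{V}_{\mathrm{(iii)}}$ is the set of doubled geodesics of length $\pi$. For a doubled geodesic $2|\gamma|$ we have $T_V=0$. Hence near such $V$ the cycles $\Phi_i(x)$ are flat-close to $0$ and are represented by thin regions around $\gamma$, i.e.\ by boundaries of thin neighbourhoods, which are smooth curves homotopic to $2\gamma$. For type (ii) one has $T_V=\gamma_1+\gamma_2$, a figure-eight-like cycle. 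I would use the connectedness argument of \cite[Section 4]{AMN2} (the case analysis there already shows both types cannot be mixed in a connected critical set of a 2-sweepout) to conclude that along our 3-sweepouts only one type occurs. If only type (ii) occurs, each $\gamma_j$ is a one-sided geodesic of length $\pi$. The boundary of a small tubular neighbourhood of a one-sided closed geodesic $\gamma$ is a smooth simple closed curve $\widehat\gamma$ double covering $\gamma$. In either case I replace the geodesic data by these curves.

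\textbf{Step 2.} I mimic the construction in the proof of Proposition \ref{prop: w_4>w_3 for Zoll spheres}: build $\Phi'_i$ skeleton by skeleton by graphical interpolation over the nearby critical geodesics (or their double covers $\widehat\gamma$). The aim is to show that if the set $\mathcal{L}$ of simple closed geodesics of length $\pi$ does not fill $G_1(M)$, then $\Phi'_i$ factors, up to small $\Flat$-perturbation, through a space whose $\Z_2$-cohomology kills $\overline\lambda^3$. Concretely, I would prove that $\mathcal{L}$ is a compact set, that by Allard regularity it is smoothly compact, and that each $\gamma\in\mathcal{L}$ is one-sided. As in Proposition \ref{prop: nullity upper bound}, lifting to $\Sph^2$ shows that $\widetilde\gamma=\pi^{-1}(\gamma)$ has index at most one, nullity at most $2$, and Jacobi fields determined by their $1$-jet at a point. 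Then White's deformation (Proposition \ref{prop: White's deformation}) gives local charts of dimension at most $2$ for $\mathcal{L}$. Using $\overline\lambda^3\neq0$ on $\Phi_i^*$ exactly as in Section 5.3 (and as in \cite[Section 6]{AMN2}), the "good" part $\mathcal{G}\subset\mathcal{L}$ must carry a nonzero class that forces it to be a closed 2-manifold. Here the extra width $\omega_4$ is what gives the additional dimension, compared with the $\omega_3$ situation in \cite{AMN2}, which only produced systolic information.

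\textbf{Step 3.} With $\mathcal{G}$ a closed 2-dimensional family, I would repeat the local-injectivity argument of Section 5.3. The map $(\gamma,x,\text{orientation})\mapsto(x,\dot\gamma(x))\in T^1M$ is locally injective, since a degenerate Jacobi field would vanish to first order. It is therefore a covering between closed 3-manifolds. Lifting to $T^1\Sph^2\approx\R\Proj^3$ via the double cover shows it is a diffeomorphism onto $T^1M$. Then every unit-speed geodesic is a closed geodesic in $\mathcal{G}$ of length $\pi$, so $g$ is Zoll with geodesics of length $\pi$.

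The main obstacle is Step 2. One must show that the equality $\omega_3=\omega_4$ genuinely forces a 2-parameter, rather than a 1-parameter, family of critical geodesics. This amounts to adapting the deformation arguments of \cite[Propositions 6.7--6.10]{AMN2} from surfaces in $\Sph^3$ to curves in $\R\Proj^2$, while handling the type (iii) multiplicity-two degeneration. I would first try to pass to double covers $\widehat\gamma$ so that all critical objects become multiplicity-one smooth curves, and then apply the cohomological count.
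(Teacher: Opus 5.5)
There is a genuine gap, and Step 1 points in the wrong direction: type (ii) configurations cannot be discarded, because they are what carries the sweepout.

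\textbf{Why Step 1 fails.} You correctly note that $T_V=0$ for a doubled geodesic $V=2|\gamma|$, so nearby cycles are $\Flat$-close to $0$ and bound regions of almost zero or almost full area. That is exactly why type (iii) cannot carry $\overline\lambda$. If $\sup_x d(\Phi_i(x),\mathcal{V}_{\mathrm{(iii)}})\to 0$, then $\Flat(\Phi_i(x),0)\to 0$ uniformly. A map uniformly $\Flat$-close to the constant map $0$ is not even a $1$-sweepout; this is the same stability fact used at the end of the proof of Proposition \ref{prop: w_4>w_3 for Zoll spheres}. Replacing a geodesic by the boundary $\widehat\gamma$ of a thin tubular neighbourhood has the same defect, since $\widehat\gamma$ is $\Flat$-close to $0$. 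The summary of \cite{AMN2} also does not forbid types (ii) and (iii) from coexisting in the critical set.

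\textbf{Why Step 2 has nothing to act on.} A single one-sided geodesic of length $\pi$ is not a mod-$2$ boundary, so it is not an element of $\ZZ_1(M,\Z_2)$. There is therefore no $3$-sweepout by single geodesics to which the good-set and cohomology count of Section 5.3 could apply; the natural objects are unordered pairs $\gamma_1+\gamma_2$. You never explain how $\overline\lambda^3\neq 0$ would force a closed $2$-dimensional family of single geodesics, and you acknowledge that Step 2 is not carried out. Separately, in Step 3, $T^1\Sph^2\approx\R\Proj^3$ is not simply connected, so lifting alone does not give degree one. Injectivity is automatic for geodesics by ODE uniqueness, so that step is repairable.

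\textbf{The missing idea.} The argument needed is elementary and uses none of the nullity, White-deformation or covering machinery. Take three open sets $\Omega_1,\Omega_2,\Omega_3$ with disjoint closures. A $3$-sweepout $\Phi_i$ contains a cycle $\Phi_i(x_i)=\partial U_i$ that bisects all three. Pass to a subsequence with $U_i\to U$ and $|\Phi_i(x_i)|\to\gamma_1+\gamma_2\in C(\{\Phi_i\})$. If $\gamma_1=\gamma_2$, then $U\in\{\emptyset,M\}$, which contradicts the bisection. Hence $\gamma_1\neq\gamma_2$, so type (ii) is forced. Shrinking the $\Omega_j$ to points shows that any three points of $M$ lie on $\gamma_1\cup\gamma_2$ for some element of the critical set. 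Apply this to $p$, $q=\exp_p(v)$ and $\exp_p(v/n)$, and use compactness of the critical set. This gives a simple closed geodesic of length $\pi$ through any two points, and then through any $(p,v)\in TM$, which is the Zoll property.
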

\begin{proof}
    Let $\Gamma\subset \mathcal{V}$ be the set of varifolds that are either a sum of two simple closed geodesics of length $\pi$ and multiplicity one which intersect transversely at exactly one point, or a simple closed geodesic of length $\pi$ and multiplicity two.
    From Proposition \ref{prop: Props 4.6 to 4.12 in AMN2} and from the discussion preceding it, there is an optimal sequence of 3-sweepouts $\{\Phi_i\}$ such that
    \[ \lim_{i\to\infty} \sup_{x\in X_i} d(\Phi_i(x),\Gamma) = 0. \]
    In particular, $C(\{\Phi_i\}) = \Lambda(\{\Phi_i\})$ and $C(\{\Phi_i\})$ is a compact subset of $\Gamma$.
    We write $\Gamma_0:=C(\{\Phi_i\})\subset \Gamma$.

    Now, for each triple $\Omega_1$, $\Omega_2$ and $\Omega_3$ of open subsets of $M$ whose closures are mutually disjoint ($\overline{\Omega}_i\cap \overline{\Omega}_j = \emptyset$ for all $i\neq j$), and for each $i\in\N$, there is some $x_i\in X_i$ such that $\Phi_i(x_i)$ divides every $\Omega_i$ in two parts with the same area.
    Each $\Phi_i(x_i) =\partial U_i$ for $U_i\in \I_2(M,\Z_2)$, and we can take a subsequence, if necessary, and suppose that both $U_i\to U$ and $|\Phi_i(x_i)|\to \gamma_1+\gamma_2\in\Gamma_0$ as $i\to \infty$.
    Here, we denote by $\gamma_i$ a simple closed geodesic of length $\pi$.
    If $\gamma_1\neq\gamma_2$, then $\partial U = \gamma_1+\gamma_2$; otherwise, $U= M$ or $U = \emptyset$.
    Since $area(U\cap \Omega_j) = \lim_{i\to\infty}area(U_i\cap \Omega_j) = area(\Omega_j)/2$ for all $j=1,2,3$, we see that $U$ cannot be either $M$ or $\emptyset$, so that $\gamma_1\neq \gamma_2$.
    By shrinking $\Omega_j$ to points $p_j$, we conclude that for any triple of points $p_1,p_2,p_3\in M$ there is some $\gamma_1+\gamma_2\in \Gamma_0$ containing them (possibly with $\gamma_1 = \gamma_2$).

    We claim that through any two points $p,q\in M$ passes some simple closed geodesic $\gamma$ of length $\pi$.
    To see this, write $q = \exp_p(v)$ for some $v\in T_pM$, and consider a sequence $\gamma_1^n + \gamma_2^n\in \Gamma_0$ such that $p,q,\exp_p(v/n)\in \gamma_1^n+\gamma_2^n$.
    If $\gamma_1^n = \gamma_2^n$ for some $n$ or if either $p,q\in \gamma_1^n$ or $p,q\in \gamma_2^n$ for some $n$, we have nothing to prove.
    Hence, we can assume that those are not the cases for every $n$.
    Then either $p,\exp_p(v/n)\in \gamma_i^n$ or $\exp_p(v/n),q\in\gamma_i^n$ for some $i$ and infinitely many $n$'s.
    In the first case, we may take $i=1$ and pass to a subsequence to conclude that $\gamma^n_1\to\gamma$ as $n\to\infty$, where $\gamma$ is a simple closed geodesic of length $\pi$ that passes through $p$ with velocity $\lambda v$ for some $\lambda\neq 0$.
    Since $q = \exp_p(v)$, we conclude that $\gamma$ also passes through $q$.
    In the second case, we may also assume that $i=1$ and pass to a subsequence to conclude that $\gamma_1^n\to \gamma$ for some simple closed geodesic $\gamma$ of length $\pi$ and that passes through both $p$ and $q$. 

    To finish the proof, note that, through any point $p\in M$ and any vector $v\in T_pM$, we can take a sequence of simple closed geodesics of length $\pi$ $\gamma_n$ such that $p,\exp_p(v/n)\in\gamma_n$ for all $n$.
    Passing to a subsequence, if necessary, $\gamma_n$ converges to a simple closed geodesic $\gamma$ of length $\pi$ that can be parametrized in order to pass through $p$ with velocity $v$.
    Since both $p$ and $v$ were arbitrary, this shows that all geodesics of $(M,g)$ are simple closed curves of length $\pi$.
\end{proof}

\begin{proof}[Proof of Theorem D]
    Let $(M,g)$ be a closed surface with first four widths equal to $2\pi$.
    By \cite[Theorem 4.1]{AMN2} and Proposition \ref{prop: w_1=w_4 for (RP^2,g) => Zoll}, $(M,g)$ is either a Zoll sphere with geodesics of length $2\pi$ or a Zoll  real projective plane with geodesics of length $\pi$.
    Since $\omega_4(M,g) = \omega_3(M,g)$, the first case cannot happen by Proposition \ref{prop: w_4>w_3 for Zoll spheres}.
    Therefore, $(M,g)$ is a Zoll real projective plane whose geodesics have length $\pi$, and Green's Theorem \cite{Gre} implies that $(M,g)$ is isometric to $(\R\Proj^2,can)$.
\end{proof}

\end{document}